\documentclass[11pt]{article}
\usepackage[T1]{fontenc}
\usepackage[utf8]{inputenc}
\usepackage{lmodern}
\usepackage[margin=1.15in]{geometry}
\usepackage{amsmath,amssymb,amsthm,mathtools,aliascnt}
\usepackage{microtype}
\usepackage{graphicx}
\usepackage{booktabs,tabularx}
\usepackage{enumitem}
\usepackage[font=small,labelfont=bf]{caption}
\usepackage[round,authoryear]{natbib}
\usepackage{xcolor}
\usepackage{xurl}
\usepackage[hypertexnames=false,colorlinks=true,linkcolor=blue!45!black,citecolor=blue!45!black,urlcolor=blue!45!black]{hyperref}
\usepackage[capitalise,nameinlink,noabbrev]{cleveref}
\usepackage{needspace}
\DeclareMathOperator{\dist}{dist}
\DeclareMathOperator{\gspan}{gspan}
\DeclareMathOperator{\arccosh}{arccosh}
\DeclareMathOperator{\arcsinh}{arcsinh}
\DeclareMathOperator{\interior}{int}

\DeclareMathOperator{\spanof}{span}
\DeclareMathOperator{\Proj}{Proj}
\newcommand{\calM}{\mathcal M}
\newcommand{\calF}{\mathcal F}
\newcommand{\calA}{\mathcal A}
\newcommand{\calO}{\mathcal O}
\newcommand{\HH}{\mathbb H}
\newcommand{\RR}{\mathbb R}
\newcommand{\T}{\mathrm T}
\newcommand{\xref}{x_{\mathrm{ref}}}
\newcommand{\inner}[2]{\langle #1,#2\rangle}
\newcommand{\innerL}[2]{\langle #1,#2\rangle_{\mathrm L}}
\newcommand{\norm}[1]{\lVert #1\rVert}

\newcommand{\positive}[1]{[#1]_+}
\newcommand{\eps}{\epsilon}

\newtheorem{theorem}{Theorem}[section]
\newaliascnt{proposition}{theorem}
\newtheorem{proposition}[proposition]{Proposition}
\aliascntresetthe{proposition}
\newaliascnt{lemma}{theorem}
\newtheorem{lemma}[lemma]{Lemma}
\aliascntresetthe{lemma}
\newaliascnt{corollary}{theorem}

\aliascntresetthe{corollary}
\theoremstyle{definition}
\newaliascnt{definition}{theorem}

\aliascntresetthe{definition}
\newaliascnt{assumption}{theorem}

\aliascntresetthe{assumption}
\theoremstyle{remark}
\newaliascnt{remark}{theorem}
\newtheorem{remark}[remark]{Remark}
\aliascntresetthe{remark}
\newtheorem*{remark*}{Remark}
\AddToHook{env/theorem/before}{\Needspace{4\baselineskip}}
\AddToHook{env/lemma/before}{\Needspace{4\baselineskip}}
\AddToHook{env/proposition/before}{\Needspace{4\baselineskip}}
\setlist{topsep=4pt,itemsep=3pt,parsep=0pt}
\newcommand{\insight}[2]{\begin{center}\fbox{\begin{minipage}{0.93\linewidth}\textbf{#1.} #2\end{minipage}}\end{center}}
\title{Tight lower bounds for Lipschitz geodesically convex\\ optimization on hyperbolic space}
\author{Christopher Criscitiello\thanks{The Wharton School, University of Pennsylvania, USA. \texttt{ccriscitiello6@gmail.com}.}}
\date{October 19, 2026}
\hypersetup{pdftitle={Tight lower bounds for Lipschitz geodesically convex optimization on hyperbolic space},pdfauthor={Christopher Criscitiello}}
\begin{document}
\maketitle
\begin{abstract}
We determine the dimension-independent first-order query complexity of minimizing a Lipschitz geodesically convex (g-convex) function on hyperbolic space. Given a ball of radius $r$ containing a global minimizer, we prove that $\Omega(\zeta_r/\eps^2)$ queries are necessary to attain accuracy $\eps Mr$, where $M$ is the Lipschitz constant and $\zeta_r=r/\tanh r$ captures the effect of curvature. This matches the known $O(\zeta_r/\eps^2)$ upper bound for the Riemannian subgradient method \citep{ZS16}. This is the first query-complexity result for g-convex optimization that is tight simultaneously in curvature and target accuracy.

The standard Euclidean recipe of constructing hard instances as maxima of affine functions does not transfer to hyperbolic space, so the lower bound requires a new mechanism. Our starting point is the worst-function construction of \citet[Appendix~A]{CB23}. It gives the same lower bound for a subclass of algorithms, including the subgradient method, whose queries remain in prescribed g-convex regions. We retain this geometric setup and carefully tune and augment their hard function to accommodate every deterministic first-order algorithm. The key new mechanism is an \emph{advance-or-stall scheme}: when a query lies in these regions, the oracle reveals at most one new information direction (\emph{advance}); when a query leaves them, we show that the oracle can answer without revealing \emph{any} new hidden direction (\emph{stall}). Unlike earlier curvature-dependent lower bounds, the proof uses no ball-packing argument.
\end{abstract}
\medskip
\noindent\textbf{Keywords:} geodesic convexity, oracle complexity, hyperbolic space, lower bounds, Riemannian optimization.

\tableofcontents

\section{Introduction}\label{sec:intro}
We consider minimizing a geodesically convex (g-convex) function $f:\calM\to\RR$ on a Hadamard manifold $\calM$:
\begin{equation}\tag{P}\label{eq:optimization}
\min_{x\in\calM} f(x).
\end{equation}
When $\calM$ is Euclidean space $\RR^d$, \eqref{eq:optimization} corresponds to standard convex optimization.

Examples of g-convex optimization on curved spaces include computing intrinsic means and medians \citep{Karcher77,Fletcher09,bacak2014hadamard}; robust covariance estimation and subspace recovery \citep{auderset2005angular,wiesel2012gconvexity,wiesel2015gconvexity,ciobotaru2020geometrical,franksmoitra2020}; estimation for matrix normal models \citep{tang2021integrated,amendola2021,franks2021neartyler}; the quantum marginal problem \citep{tensorscaling2018}; and computing Brascamp--Lieb constants \citep{sravishnoibracsampliebconstant}. Many of these applications fall under the broader framework of scaling problems; see \citet{BFGOWW19}.

We study the first-order query complexity of g-convex optimization: how much information must an algorithm obtain before it can find an approximate minimizer? Hyperbolic space $\HH^d$---which has constant negative curvature---provides the simplest setting in which to identify the cost of departing from Euclidean geometry. Throughout, we therefore restrict our attention to $\calM=\HH^d$ and to one function class: Lipschitz, potentially nonsmooth, g-convex functions.\footnote{We expect our lower bound to extend to smooth g-convex optimization through a suitable smoothing argument, in the spirit of \citet{GN15} and \citet[Section~5]{CB23}. This extension is left to future work.} We now define the problem and its oracle model precisely.

\subsection{Problem class, oracles, and query complexity}\label{subsec:problem}
Let $\HH^d$ denote $d$-dimensional hyperbolic space with constant sectional curvature $-1$ (see \Cref{subsec:model}).\footnote{For constant curvature $-K<0$, replace $\zeta_r$ by $\zeta_{r\sqrt K}$ throughout the paper. The normalization to curvature $-1$ is only for convenience.} For each $d\ge2$, fix $\xref^d\in\HH^d$; we often write $\xref$ for $\xref^d$, suppressing the superscript. Write $\dist$ for Riemannian distance and $B(\xref,r)$ for the closed ball of radius $r$ centered at $\xref$. For $r,M>0$, the function class under consideration is
\begin{equation}\tag{fct class}\label{eq:class}
\calF_{r,M}^d
=\left\{f:\HH^d\to\RR:
\begin{array}{l}
f\text{ is globally g-convex and }M\text{-Lipschitz},\\
f\text{ has a global minimizer in }B(\xref,r)
\end{array}\right\}.
\end{equation}
Also let $\calF_{r,M}=\bigcup_{d\ge2}\calF_{r,M}^d$. Geodesic convexity, Lipschitz continuity, and subgradients are defined in \Cref{subsec:gconvex}. We write $f^*=\min f$. Given $0<\eps<1$, the \emph{dimension-independent Lipschitz g-convex optimization problem} is
\begin{equation}\tag{Lip}\label{eq:problem}
\text{given }f\in\calF_{r,M},\quad
\text{find }x\quad\text{such that}\quad
f(x)-f^*\le\eps Mr.
\end{equation}
\citet{CB23} call this problem P2. The ball $B(\xref,r)$ in \eqref{eq:class} specifies our \emph{initial uncertainty} about a global minimizer, not a restriction on where an algorithm may query.

A \emph{first-order oracle} for a g-convex function $f:\HH^d\to\RR$ is a map
\[
\calO_f(x)=(f(x),g(x)),\qquad g(x)\in\partial f(x).
\]
A query at $x$ thus returns the exact function value and a subgradient at $x$. A \emph{deterministic first-order algorithm} on $\HH^d$ is a sequence of maps
\[
\calA^d=\bigl(\calA_k^d:(\RR\times\T\HH^d)^k\to\HH^d\bigr)_{k\ge0},
\]
where $\T\HH^d$ denotes the tangent bundle. The maps may depend on the known parameters $d,r,M,\eps$ and $\xref^d$. Starting with the empty transcript $\mathcal D_0=\varnothing$, the interaction of $\calA^d$ with an oracle $\calO_f$ produces query points $x_0,x_1,\ldots$ recursively by
\[
x_k=\calA_k^d(\mathcal D_k),\qquad
(F_k,g_k)=\calO_f(x_k),\qquad
\mathcal D_{k+1}=\bigl(F_\ell,(x_\ell,g_\ell)\bigr)_{\ell=0}^{k}.
\]
Thus $\mathcal D_k$ records the information from the first $k$ queries. The algorithm may query anywhere in $\HH^d$, and computations between oracle calls are unrestricted.

For a deterministic first-order algorithm $\calA^d$, a function $f$, and a valid oracle $\calO_f$, we count queries up to and including the first one satisfying \eqref{eq:problem}:
\[
T_{\eps Mr}(\calA^d,f,\calO_f)
=\inf\{k+1:k\ge0,\ f(x_k)-f^*\le\eps Mr\},
\]
with $\inf\varnothing=+\infty$. Let $\calA=(\calA^d)_{d\ge2}$ be a family of such algorithms. The worst-case \emph{query complexity} of solving \eqref{eq:problem} is
\begin{equation}\tag{query complexity}\label{eq:query-complexity}
T_{\eps,r}
=\inf_{\calA}\ \sup_{d\ge2}\ \sup_{f\in\calF_{r,M}^d}\ \sup_{\calO_f}
T_{\eps Mr}(\calA^d,f,\calO_f),
\end{equation}
where the last supremum is over all valid first-order oracles for $f$. Scaling $f$ shows that $T_{\eps,r}$ is independent of $M$: it depends only on $\eps$ and $r$. In Euclidean convex optimization it is also independent of $r$, because rescaling the domain preserves the geometry. Crucially, hyperbolic space has no such scale invariance.

The effect of curvature on the complexity $T_{\eps,r}$ is therefore expressed through its dependence on $r$, and in particular through the quantity
\begin{equation}\tag{curvature factor}\label{eq:zeta}
\zeta_r=\frac{r}{\tanh r}.
\end{equation}
For small $r$, the geometry of $\HH^d$ inside $B(\xref,r)$ is approximately Euclidean, and indeed $\zeta_r\to1$ as $r\downarrow0$. For large $r$, curvature has a greater effect: $\zeta_r\sim r$ as $r\to\infty$.

The dimension-independent qualification is essential. In fixed dimension, cutting-plane methods can have \emph{logarithmic} dependence on the target accuracy; see \citet{Rusciano20,CMB23}. In contrast, our hard instances have dimension $d$ proportional to the query horizon.

\subsection{Previous bounds and the main result}\label{subsec:main-result}
The dimension-independent first-order complexity of Euclidean Lipschitz convex optimization is $\Theta(\eps^{-2})$, and the subgradient method is optimal \citep{NY83,Nesterov04}. For g-convex optimization, \citet{ZS16} give an $O(\zeta_r/\eps^2)$ upper bound for the Riemannian subgradient method, which in turn implies
\begin{equation}\tag{subgradient bound}\label{eq:upper-complexity}
T_{\eps,r}=O\!\left(\frac{\zeta_r}{\eps^2}\right).
\end{equation}
Is this bound tight? A better algorithm might instead achieve, for example, $O(\zeta_r+\eps^{-2})$, paying \emph{additively} rather than \emph{multiplicatively} for curvature and accuracy.

\citet{HM21} proved an $\widetilde\Omega(\zeta_r)$ lower bound for smooth strongly g-convex optimization on the hyperbolic plane, assuming \emph{noisy} first-order oracles.\footnote{The notation $\widetilde\Omega$ suppresses logarithmic factors.} Building on that work, \citet{CB22} showed that this lower bound persists for exact first-order oracles, as considered here. Building in part on those results, \citet{CB23} proved for \eqref{eq:problem} the lower bound
\[
T_{\eps,r}=\widetilde\Omega\!\left(\zeta_r+\frac{1}{\zeta_r^2\eps^2}\right)
\]
and \emph{conjectured} that the multiplicative upper bound \eqref{eq:upper-complexity} is optimal.

As evidence for their conjecture, \citet[Appendix~A]{CB23} prove an $\Omega(\zeta_r/\eps^2)$ lower bound for a \emph{subclass} of algorithms that includes a Polyak subgradient method. The present paper builds substantially on that construction: we use its geometric setup (\Cref{sec:geometry}) and tune and augment its hard-function ansatz (\Cref{sec:construction}). The main ideas and their relation to the earlier proof are explained in \Cref{subsec:overview}.

\Needspace{4\baselineskip}
Our main result is the following theorem.

\begin{theorem}[Tight lower bound]\label{thm:main}
Let $c=2^{-16}$ and $\eps_0=2^{-9}$. For $r,M>0$ and $0<\eps\le\eps_0$, set
\begin{equation}\label{eq:main-budget}
T=\left\lfloor c\frac{\zeta_r}{\eps^2}\right\rfloor,
\qquad d=2T.
\end{equation}
For every deterministic first-order algorithm $\calA^d$ on $\HH^d$, there exist $f\in\calF_{r,M}^d$ and a valid first-order oracle $\calO_f$ such that running $\calA^d$ on $\calO_f$ produces queries $x_k$ satisfying
\begin{equation}\label{eq:main-gap}
f(x_k)-f^*\ge2\eps Mr
\qquad\text{for every }k=0,\ldots,T-1.
\end{equation}
The function $f$ has a unique minimizer $x^*\in\partial B(\xref,r)$. The conclusion holds even if $f^*=0$ is disclosed to the algorithm in advance.
\end{theorem}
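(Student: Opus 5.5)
The plan is a resisting-oracle argument built on the geometric setup of \citet[Appendix~A]{CB23}. By scaling we take $M=1$. We fix $\xref$ and an orthonormal family of hidden directions $v_1,\dots,v_d$ in $\T_{\xref}\HH^d$, and we place the target minimizer $x^*=\exp_{\xref}(r u)$ on $\partial B(\xref,r)$. Here $u$ is a normalized combination of the $v_i$, with signs or directions chosen adaptively. The hard function is a maximum of g-convex pieces $f=\max_i h_i$. Each $h_i$ is built from Busemann-type functions or distance-to-totally-geodesic-hypersurface functions, since these are g-convex and $1$-Lipschitz in $\HH^d$. They replace the affine functions of the Euclidean Nemirovski construction, which are not available here. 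The offsets of the pieces are tuned with a step size of order $\eps r/\sqrt{\zeta_r}$ per hidden direction. With this tuning, $T\approx c\,\zeta_r/\eps^2$ revealed directions still leave a gap of at least $2\eps r$. The curvature factor enters because, in hyperbolic geometry, moving distance $r$ toward $x^*$ buys a function decrease that is smaller by the factor $\tanh r/r$ along the relevant Busemann level sets. This is exactly the computation of \citet{CB23} that produces $\zeta_r$.

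The key steps, in order, are as follows.

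\begin{enumerate}
\item We recall from \Cref{sec:geometry} the g-convex regions $\Omega_k$ (nested neighbourhoods of the geodesic span of $\xref$ and $v_1,\dots,v_k$) and the estimates controlling the pieces on them.
\item We define the pieces $h_i$ and verify global g-convexity and the $1$-Lipschitz property. We also show that $x^*$ is the unique minimizer with $f^*=0$; this is valid even if $f^*=0$ is disclosed, because the oracle is designed so that this is consistent.
\item We prove the \emph{advance} lemma. If $x_k\in\Omega_{j}$ with $j$ the current number of revealed directions, then the active piece at $x_k$ depends only on $v_1,\dots,v_{j+1}$. The value and subgradient can therefore be answered consistently with every completion of the hidden directions beyond $j+1$, and the counter increases by at most one.
\item We prove the \emph{stall} lemma. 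If $x_k\notin\Omega_j$, then some piece determined by $v_1,\dots,v_j$ alone, for instance a ``wall'' or augmentation term such as a multiple of the distance to $\Omega_j$ plus a constant, is maximal at $x_k$. The oracle then answers with it and reveals nothing.
\item Since $d=2T$ and at most one direction is revealed per query, after $T$ queries at least $T$ directions remain unconstrained. We conclude with the value estimate that every point of $\Omega_T$, and every point outside it by the wall term, has $f\ge 2\eps r$. Choosing the unrevealed directions orthogonal to the transcript then fixes a single $f$ and a single valid oracle reproducing all answers.
\end{enumerate}

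The main obstacle is the stall step. The augmentation must be large enough outside $\Omega_j$ to dominate every piece that involves unrevealed directions, yet it must not destroy g-convexity or the Lipschitz constant. It must also not create a new minimizer or lower $f$ near $x^*$. In Euclidean space this is automatic, because affine functions of orthogonal directions vanish on the span. In $\HH^d$, pieces built on hidden directions tilt upward or downward off the span at a rate governed by $\cosh$ and $\sinh$ of the distance. I would first try augmentation terms of the form $a_j+\lambda\,\dist(x,\Omega_j)$ with $\lambda<1$. For these I would bound the variation of hidden pieces via hyperbolic trigonometry in the right triangle formed by $x$, its projection onto $\Omega_j$, and the hidden direction, which yields a bound linear in $\dist(x,\Omega_j)$ with slope below $\lambda$. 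The constants $c=2^{-16}$ and $\eps_0=2^{-9}$ would be fixed at the end so that these slack inequalities and the gap estimate hold simultaneously.
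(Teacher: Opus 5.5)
\textbf{There is a genuine gap.} Your proposal is a template, and its two load-bearing steps are respectively ruled out and left open.

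The first is the source of $\zeta_r$. If the pieces $h_i$ are shifted, scaled Busemann functions, as in the direct analogue of the Euclidean max-of-affine construction, then $f=\max_i h_i$ is horospherically convex. For Lipschitz functions in that class there are curvature-independent $O(\eps^{-2})$ methods (\Cref{rem:horospherical}). So no tuning of offsets can give $\Omega(\zeta_r/\eps^2)$, and a smaller decrease ``along Busemann level sets'' cannot be the mechanism. Your alternative, distances to hypersurfaces, is not developed far enough to show where $\zeta_r$ would come from. Also, \Cref{sec:geometry} contains no regions $\Omega_k$ or estimates of the kind you recall; it provides $I_k$, $S_k$, and the radii $r_k$.

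In the paper the factor arises differently. The base function is $\dist(\cdot,x^*)$, and $x^*$ is placed by the nested right triangles of \Cref{lem:geometry} with $\cos\theta=132\eps$. Then $\sinh r_k=(\sin\theta)^k\sinh r$ keeps $\dist(x^*,I_k)=r_k\ge r/2$ for $k\le\zeta_r/(2\cos^2\theta)$. Halfspace ramps $\alpha\dist(\cdot,L_{j,\sigma})$ with $\alpha=32\sec\theta$ deflect the gradient of $\dist(\cdot,x^*)$ into the information submanifold. Normalizing the Lipschitz constant $1+\alpha$ turns the distance gap $r/2$ into $2\eps Mr$.

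The second is the stall step, which you rightly call the crux; it is unresolved, and a wall cannot resolve it. As literally stated, your final estimate gives $f\ge2\eps r$ at every point of $\Omega_T$ and at every point outside it, contradicting $f(x^*)=0$. Suppose the wall $a_j+\lambda\dist(\cdot,\Omega_j)$ dominated all hidden pieces on $\HH^d\setminus\Omega_j$ while $x^*\notin\Omega_j$. Then $f$ on that known set, and hence its unique zero $x^*$, would be determined by the revealed directions, and the algorithm could simply query $x^*$. So $x^*$ must lie in the good region, where the wall is a constant $a_j\le0$ and gives no control. What is missing is a mechanism that keeps queries away from $x^*$ without walls.

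The paper splits this into two parts.
\begin{enumerate}
\item Queries off the information submanifold are handled by the reduction of \Cref{thm:blackbox}. The lift $F(x)=f(\Proj_{\mathcal N}(x))+M\log\cosh\dist(x,\mathcal N)$ repairs the non-g-convex $f\circ\Proj_{\mathcal N}$ via the opposite-Busemann identity (\Cref{lem:busemann-coordinates,lem:lifting}). It satisfies $F\ge f\circ\Proj_{\mathcal N}$, and each query to $F$ is answered by one query to $f$ at the projection. The embedding is chosen orthogonal to all past queries, which is why the dimension is $2T$.
\item Queries in $I_k$ lying outside some halfspace are handled by reflections. An active old ramp leaves all future signs free, so the available hidden components form a box centered at zero. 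The tuned halfspaces contain $x^*$ with margin $\sinh\dist(x^*,\partial L_{j,\sigma})\ge\frac15\cos\theta\sinh r_j$. Through \Cref{lem:oldramp}, this margin bounds the hidden components of $\nabla\dist(\cdot,x^*)$ tightly enough to fit in that box, however old the ramp. The returned subgradient is therefore tangent to $I_k$ (\Cref{lem:completion,lem:stall-advance}).
\end{enumerate}
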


Combining \Cref{thm:main} with \eqref{eq:upper-complexity} gives
\begin{equation}\label{eq:complexity}
T_{\eps,r}=\Theta\!\left(\frac{\zeta_r}{\eps^2}\right)
\qquad(r>0,\ 0<\eps\le\eps_0).
\end{equation}
This answers the conjecture of \citet{CB23} in the affirmative for the Lipschitz problem \eqref{eq:problem}. In particular, at any sufficiently small fixed relative accuracy, the construction gives an $\Omega(\zeta_r)$ lower bound \emph{without logarithmic losses} and \emph{without ball packing}.

\begin{remark}[Subgradient selection and smoothing]\label{rem:smoothing}
\Cref{thm:main} is stated for a nonsmooth hard function and an exact subgradient oracle. A $C^1$ refinement with unique subgradients can be obtained by slightly separating successive ramp families (\Cref{subsec:ansatz}) and applying sufficiently local Riemannian Moreau regularization. The regularity and localization properties of this regularization are given by \citet[Lemma~10 and Appendix~G]{CB23}, following \citet{AF06}. We omit that additional construction.
\end{remark}

\paragraph{Concurrent work.}
In concurrent work posted to arXiv one week earlier, \citet{WuEtAl26} independently obtained the same tight $\Theta(\zeta_r/\eps^2)$ complexity. Their proof shares three central ingredients with \citet[Appendix~A]{CB23}, and hence also with the proof developed here: \textbf{(a)} the nested hyperbolic geometry recalled in \Cref{sec:geometry}, which is crucial for producing the $\zeta_r$ factor (see \Cref{subsec:overview}); \textbf{(b)} the use of distance to a hidden minimizer as the base function (see \Cref{subsec:overview,prop:sharp-support}); and \textbf{(c)} a distance-based correction mechanism closely tied to hyperbolic halfspaces.

Nevertheless, the hard functions constructed by \citeauthor{WuEtAl26} and in the present paper, as well as the mechanisms for handling arbitrary queries, are quite different. Our hard-function construction remains close to the ansatz of \citet{CB23}, augmented with the hyperbolic reflections suggested by \citet[Chapter~5, Remark~5.1.17]{CriscitielloThesis25}:
\[
f(x)=\dist(x,x^*)+\alpha\max_{j,\sigma}\dist(x,L_{j,\sigma}),
\]
as in \eqref{eq:core}. \citeauthor{WuEtAl26} instead grow a ``convex carrier'' $K$ and use the hard function
\[
f(x)=\min_{y\in K}
\left\{\mu\,\dist(y,x^*)+\dist(x,y)\right\}.
\]
\citeauthor{WuEtAl26} construct a resisting oracle directly for arbitrary queries, whereas we first establish the lower bound under the geodesic-span assumption (see \Cref{subsec:A1proof}) and then remove this assumption through the black-box reduction of \Cref{sec:unrestricted}.

\subsection{Construction and geometric ideas}\label{subsec:overview}
In this subsection, we describe the key ideas behind the lower bound construction.

\paragraph{First-order supports and g-convex building blocks.}
Classical lower bounds for Lipschitz convex optimization use maxima of affine functions \citep{NY83,Nesterov04}. This is natural: a finite convex function is the supremum of its affine supports $f(y)+\inner{g}{x-y}$, where $g\in\partial f(y)$. The g-convex analogue is (see \Cref{subsec:gconvex})
\begin{equation}\tag{support inequality}\label{eq:intro-support}
f(x)\ge f(y)+\inner{g}{\log_y(x)},\qquad g\in\partial f(y).
\end{equation}
On hyperbolic space, however, the support $x\mapsto\inner{g}{\log_y(x)}$ is \emph{not} g-convex unless $g=0$. One therefore cannot simply follow the Euclidean construction and obtain a hard g-convex function by taking maxima of these supports.

Nevertheless, \eqref{eq:intro-support} is sharp: for prescribed $f(y)$, $g\in\partial f(y)$, and a target point $x$, there is a g-convex function satisfying $f(x)=f(y)+\inner{g}{\log_y(x)}$; see \Cref{prop:sharp-support}. On hyperbolic space, the realizing function is a nonnegative combination of distance to a point and distance to a halfspace, plus a normalizing constant.

\paragraph{A hard function from distance corrections.}
This sharpness of \eqref{eq:intro-support} motivates the hard-function ansatz used by \citet[Appendix~A]{CB23},
\[
f(x)=\dist(x,x^*)+\alpha\max_j\dist(x,L_j),
\]
where the $L_j$ are hyperbolic halfspaces (see \Cref{subsec:model}); the precise ansatz is developed in \Cref{subsec:ansatz}. We call $x\mapsto\dist(x,L_j)$ a \emph{ramp}: it vanishes inside $L_j$ and increases at unit rate along outward normal geodesics. Without the ramps, the gradient of $f$ points directly away from $x^*$, revealing the direction toward the minimizer. The ramps are designed to deflect this gradient into an ``information submanifold''. Specifically, following the role of linear spans in Euclidean lower bounds \citep{Nesterov04}, we construct nested totally geodesic submanifolds $I_0\subseteq I_1\subseteq\cdots$ as in \Cref{sec:geometry}. At a query $x_k\in I_k$, we seek a subgradient of $f$ that is tangent to $I_{k+1}$, so that the oracle exposes at most one new direction. \citet[Appendix~A]{CB23} choose halfspaces $L_j$ that achieve this, \emph{provided} each query belongs to all previously introduced halfspaces, that is, $x_k\in L_j$ for $j<k$; they call this requirement assumption A2.

\paragraph{Reflections, tuning, and advance-or-stall.}
An unrestricted algorithm may query outside a halfspace $L_j$, violating A2. To accommodate such queries, we replace each halfspace by a set of reflected copies $L_{j,\sigma}$ and consider
\begin{equation}\tag{hard fct}\label{eq:intro-ansatz}
f(x)=\dist(x,x^*)+\alpha\max_j\max_\sigma\dist(x,L_{j,\sigma}),
\end{equation}
where $\sigma$ indexes the reflection isometries described in \Cref{subsec:ansatz}.\footnote{Chapter~5, Remark~5.1.17 of \citeauthor{CriscitielloThesis25}'s thesis \citeyearpar{CriscitielloThesis25} proposes using reflections in this way to handle algorithms that do not satisfy A2.} At a query $x_k\in I_k$, a collection of reflected ramps coincide in value while their subgradients have different components normal to $I_k$. After multiplication by $\alpha$, any convex combination of these active subgradients is a subgradient of the right-hand correction term in \eqref{eq:intro-ansatz}. The idea is to select a convex combination whose contribution to that term deflects the gradient of $\dist(\cdot,x^*)$ into the desired information submanifold.

If $x_k$ lies inside every halfspace, all ramps vanish. We then use the ``fresh'' family of ramps $\dist(\cdot,L_{k,\sigma})$ to obtain a subgradient of the hard function tangent to $I_{k+1}$. We call this an \emph{advance}. If instead $x_k$ lies outside some halfspace, a positive ramp $\dist(\cdot,L_{j,\sigma})$ from an earlier stage $j<k$ attains the maximum. We show that its reflected family provides a subgradient of the hard function tangent to $I_k$, revealing no new hidden direction. We call this a \emph{stall}; both cases are made precise in \Cref{lem:stall-advance}.

Reflections alone do not give the desired $\Omega(\zeta_r/\eps^2)$ bound with the original halfspaces proposed by Criscitiello and Boumal. The halfspaces must be carefully tuned for a \emph{stall} to work even when the active ramp is very old. Unlike the original halfspaces, whose boundaries contain $x^*$, the new halfspaces we propose contain $x^*$ \emph{strictly in their interiors}. In the stall case, the boundary of a violated halfspace therefore separates the query from $x^*$ by a definite interior margin (\Cref{lem:margin}). This separation is crucial; see \Cref{lem:oldramp,lem:completion}.

\paragraph{Why hyperbolic right triangles give the multiplicative bound.}
The location of $x^*$ and the information submanifolds $I_k$ are chosen exactly as in \citet[Appendix~A]{CB23}. Specifically, $x^*$ lies on the boundary of $B(\xref,r)$, and the construction has the following property. If $y_k$ is the metric projection of $x^*$ onto $I_k$ and $r_k=\dist(y_k,x^*)$, then the angle at $y_k$ between the direction to $x^*$ and the next information direction, which generates $I_{k+1}$ from $I_k$, is a fixed angle $\theta$ close to $\pi/2$.\footnote{This nested right-triangle construction was motivated by the geometric interpretation of the Polyak subgradient method; see \citet[Appendix~A.3]{CB23}. For the hard function constructed there, the points $y_k$ are exactly the iterates generated by the Polyak subgradient method. See the end of \Cref{sec:geometry} for this connection in more detail and \Cref{subsec:ansatz} for the related hard-function construction.} Hyperbolic trigonometry (see \Cref{subsec:triangles,sec:geometry}) applied to the right triangle $(y_k,y_{k+1},x^*)$ gives
\begin{equation}\tag{radius recurrence}\label{eq:intro-radii}
\sinh r_{k+1}=\sin\theta\,\sinh r_k,
\qquad \sinh r_k=(\sin\theta)^k\sinh r.
\end{equation}
For small $\cos\theta$, one step satisfies $r_{k+1}^2\approx r_k^2(1-\cos^2\theta/\zeta_{r_k})$. In Euclidean space, the corresponding identity is $r_{k+1}^2=r_k^2(1-\cos^2\theta)$. This is where the additional factor $\zeta_r$ enters.

Iterating \eqref{eq:intro-radii} gives $r_k\ge r/2$ for $k\le\zeta_r/(2\cos^2\theta)$ \citep[Lemma~24]{CB23}. As long as query $x_k$ remains in $I_k$, \eqref{eq:intro-ansatz} therefore has gap at least $r/2$ at that query. Its Lipschitz constant is $1+\alpha$, with $\alpha$ of order $\sec\theta$, so normalization to Lipschitz constant $M$ leaves a gap of order $Mr\cos\theta$. Taking $\cos\theta$ proportional to $\eps$ gives the lower bound, provided each new query lies in the geodesic span of the information already received (\Cref{subsec:A1proof}). This is the natural counterpart of the linear-span assumption in Euclidean lower bounds.

\paragraph{Removing the geodesic-span assumption.}
Starting from a $T$-query lower bound for geodesic-span algorithms on $\HH^d$, \Cref{sec:unrestricted} transfers it to arbitrary deterministic algorithms on $\HH^{d+T}$. The argument follows the Euclidean hidden-subspace reduction \citep{CDHS20,Li26}, with an important modification. We place a hard function $f:\HH^d\to\RR$ on a totally geodesic copy $\mathcal N\subseteq\HH^D$, where $D=d+T$, and seek to extend it to a function $F:\HH^D\to\RR$. The embedding $\mathcal N$ is chosen adversarially so that the queries of an unrestricted algorithm for $F$ correspond, after metric projection onto $\mathcal N$, to those of a geodesic-span algorithm for $f$.

Identifying $f$ with its copy on $\mathcal N$, the natural extension is $x\mapsto f(\Proj_{\mathcal N}(x))$, where $\Proj_{\mathcal N}$ denotes metric projection onto $\mathcal N$. Unlike the Euclidean construction, however, this extension is not g-convex (unless $f$ is constant). We repair it by setting
\[
F(x)=f(\Proj_{\mathcal N}(x))+M\log\cosh\dist(x,\mathcal N).
\]
\Cref{lem:lifting} shows that $F$ is g-convex and $M$-Lipschitz, and that one oracle query to $f$ suffices to answer one oracle query to $F$. \Cref{thm:blackbox} constructs the embedding adversarially and proves this black-box reduction from geodesic-span lower bounds to lower bounds for unrestricted deterministic algorithms.

The right-triangle geometry, the sharpness of first-order supports, and the original distance-correction ansatz are all due to \citet{CB23}. The present work supplies the advance-or-stall argument, the choice and analysis of the tuned halfspaces, and the black-box reduction that removes the geodesic-span assumption.

\paragraph{Organization.} \Cref{sec:background} reviews the necessary background and identifies several ingredients motivating the lower bound. \Cref{sec:geometry} recalls the inductive geometric construction of Criscitiello and Boumal, and \Cref{sec:construction} contains the main geometric argument, proving the lower bound under a geodesic-span assumption. \Cref{sec:unrestricted} removes that assumption by the black-box reduction and completes the proof of \Cref{thm:main}.

The lower bound construction is somewhat intricate. On a first reading, it may help to imagine the corresponding construction in Euclidean space, as depicted in the figures.

\section{Geometric background}\label{sec:background}
In this section, we introduce the necessary background on hyperbolic space and geodesic convexity. We also highlight two guiding facts---boxed below---that motivate the geometry behind our lower bound construction. These facts, and most of our notation, come from \citet{CB23}.

\subsection[Hadamard manifolds, hyperbolic space, and halfspaces]{Hadamard manifolds, hyperbolic space, totally geodesic submanifolds, and halfspaces}\label{subsec:model}
Let $\calM$ be a \emph{Hadamard manifold}: a complete, simply connected Riemannian manifold with nonpositive sectional curvature; see \citet{BH99} and \citet[Chapter~12]{Lee18}. For $x\in\calM$, its tangent space is $\T_x\calM$. We denote the Riemannian metric by $\inner{\cdot}{\cdot}$ and its induced norm by $\norm{v}=\sqrt{\inner{v}{v}}$. Riemannian distance is denoted by $\dist$, and $B(x,r)$ is the closed ball of radius $r$ centered at $x$. The exponential map $\exp_x:\T_x\calM\to\calM$ is a global diffeomorphism, with inverse $\log_x$. Every pair of points $x,y\in\calM$ is joined by a unique minimizing geodesic. We write $P_{x\to y}:\T_x\calM\to\T_y\calM$ for parallel transport along that geodesic. The boundary and interior of a set $D\subseteq\calM$ are denoted by $\partial D$ and $\interior D$, respectively.

Examples of Hadamard manifolds include Euclidean space $\RR^d$, $d$-dimensional hyperbolic space $\HH^d$ (see the hyperboloid model below), and positive-definite matrices with the affine-invariant metric \citep[Chapter~6]{Bhatia}.

A connected, complete Riemannian submanifold $S\subseteq\calM$ is \emph{totally geodesic} if every geodesic of $S$ is also a geodesic of $\calM$. Throughout, ``totally geodesic submanifold'' includes connectedness and completeness. These submanifolds are abundant in hyperbolic space, unlike on general Hadamard manifolds \citep[Section~11.1]{Chen00}. If $x,y$ are elements of a totally geodesic submanifold $S\subseteq\calM$, then $\log_x(y)\in\T_xS$; if $v\in\T_xS$, then $\exp_x(tv)\in S$ for every $t\in\RR$. Intrinsic and ambient distances on $S$ agree, and parallel transport along curves in $S$ preserves its tangent and normal spaces \citep[Lemmas~19--20]{CB23}.

\paragraph{The hyperboloid model.}
To make our constructions as concrete as possible, we use the hyperboloid model of hyperbolic space; see \citet{BH99} and \citet[Chapter~3]{Ratcliffe19}. Coordinates in $\RR^{d+1}$ are indexed by $0,1,\ldots,d$, with standard basis $e_0,e_1,\ldots,e_d$. Define the Lorentz inner product by
\[
\innerL{x}{y}=-x_0y_0+\sum_{i=1}^d x_i y_i.
\]
The subscript $\mathrm L$ distinguishes this ambient bilinear form from the intrinsic Riemannian metric. The model is
\begin{equation}\tag{hyperboloid model}\label{eq:model}
\HH^d=\{x\in\RR^{d+1}:\innerL{x}{x}=-1,\ x_0>0\}.
\end{equation}
Any linear map preserving the Lorentz form and the sheet $x_0>0$ restricts to an isometry of $\HH^d$. We shall use coordinate reflections of this kind in \Cref{subsec:ansatz}.

Its tangent spaces are
\[
\T_x\HH^d=x^\perp=\{v\in\RR^{d+1}:\innerL{v}{x}=0\},
\]
where orthogonality is with respect to the Lorentz form. The restriction of that form $\innerL{\cdot}{\cdot}$ to $\T_x\HH^d$ is positive definite and gives the Riemannian metric. For $x,y\in\HH^d$ and $v\in\T_x\HH^d$,
\begin{align}
\cosh\dist(x,y)&=-\innerL{x}{y},\tag{hyperboloid dist}\label{eq:hyperboloid-distance}\\
\exp_x(v)&=\cosh\norm{v}\,x+\frac{\sinh\norm{v}}{\norm{v}}v,\tag{hyperboloid exp}\label{eq:exp}\\
\log_x(y)&=\frac{\dist(x,y)}{\sinh\dist(x,y)}
\bigl(y-\cosh\dist(x,y)\,x\bigr).\tag{hyperboloid log}\label{eq:log}
\end{align}
The last two formulas are interpreted continuously at $v=0$ and $y=x$.

A $k$-dimensional totally geodesic submanifold of $\HH^d$ has the form $\HH^d\cap E$ for a $(k+1)$-dimensional linear subspace $E$ with nonempty intersection, and is isometric to $\HH^k$. Conversely, every nonempty intersection with a linear subspace is totally geodesic \citep[Lemma~21]{CB23}. Thus, for points and tangent vectors $(x_i,v_i)\in\T\HH^d$, define
\begin{equation}\tag{gspan}\label{eq:gspan}
\gspan(x_1,\ldots,x_m,v_1,\ldots,v_m)
=\HH^d\cap\spanof(x_1,\ldots,x_m,v_1,\ldots,v_m).
\end{equation}
Here $\spanof$ denotes the ordinary real linear span in $\RR^{d+1}$. Intrinsically, \eqref{eq:gspan} is the smallest totally geodesic submanifold that contains each $x_i$ and is tangent to each $v_i$ \citep[Proposition~23]{CB23}.

We also allow totally geodesic submanifolds as arguments of $\gspan$, interpreting them as collections of points. In particular, if $S=\HH^d\cap E$ is a totally geodesic submanifold, then, for $x\in\HH^d$ and $v\in\T_x\HH^d$,
\[
\gspan(S,x,v)=\HH^d\cap\spanof(E,x,v).
\]

\paragraph{Metric projection onto a totally geodesic submanifold.}
The following formulas are needed only in \Cref{sec:unrestricted}. We state them for ambient dimension $D$, to distinguish that space from the lower-dimensional space used there. For a nonempty totally geodesic submanifold $\mathcal N\subseteq\HH^D$, write $\Proj_{\mathcal N}$ for metric projection onto $\mathcal N$: for each $x\in\HH^D$, $\Proj_{\mathcal N}(x)$ is the nearest point on $\mathcal N$ in Riemannian distance.

\begin{lemma}[Hyperbolic projection]\label{lem:hyperbolic-projection}
Let $\mathcal N=\HH^D\cap V$ be nonempty, and let $\Pi_V$ be Lorentz-orthogonal linear projection onto the subspace $V\subseteq\RR^{D+1}$. For $x\in\HH^D$, define $\lambda=\sqrt{-\innerL{\Pi_Vx}{\Pi_Vx}}$. Then
\begin{equation}\tag{metric projection}\label{eq:metric-projection}
\Proj_{\mathcal N}(x)=\frac{\Pi_Vx}{\lambda},
\qquad \lambda=\cosh\dist(x,\mathcal N)\ge1.
\end{equation}
In particular, this projection is smooth. For every $z\in\mathcal N$,
\begin{equation}\tag{projection triangle}\label{eq:general-projection-triangle}
\cosh\dist(x,z)
=\cosh\dist(x,\mathcal N)\,
 \cosh\dist(\Proj_{\mathcal N}(x),z).
\end{equation}
The metric projection of an ambient geodesic onto $\mathcal N$ is contained in a geodesic of $\mathcal N$, or consists of a single point.
\end{lemma}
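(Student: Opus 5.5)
The plan is to work entirely in the hyperboloid model and to reduce every claim to linear algebra in $\RR^{D+1}$ with the Lorentz form. First we check that $V$ is Lorentz-nondegenerate, so that $\Pi_V$ is well defined. Since $\mathcal N\neq\varnothing$, $V$ contains a timelike vector $p$ with $\innerL{p}{p}=-1$. Hence $V=\RR p\oplus(p^\perp\cap V)$, and $p^\perp\cap V$ is spacelike (positive definite), because $p^\perp=\T_p\HH^D$. So $V$ has signature $(1,\dim V-1)$ and is nondegenerate. The same holds for $V^\perp$: it lies in $p^\perp$, so it is positive definite. Thus $\RR^{D+1}=V\oplus V^\perp$ and $\Pi_V$ exists.

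Next, write $x=u+w$ with $u=\Pi_Vx\in V$ and $w\in V^\perp$. Then $-1=\innerL{u}{u}+\innerL{w}{w}$ with $\innerL{w}{w}\ge0$, so $\innerL{u}{u}\le-1$. This makes $\lambda\ge1$ well defined. For $z\in\mathcal N\subseteq V$ we have $\innerL{x}{z}=\innerL{u}{z}$. We then show $u/\lambda\in\HH^D$, which needs the sign $u_0>0$. For this, note that $u$ is timelike and $\innerL{u}{z}=\innerL{x}{z}<0$ for any $z\in\mathcal N$, and two timelike vectors with negative Lorentz product lie in the same sheet. Set $q=u/\lambda\in\mathcal N$. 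We obtain
\[
\cosh\dist(x,z)=-\innerL{x}{z}=-\lambda\innerL{q}{z}=\lambda\cosh\dist(q,z),
\]
and this holds for every $z\in\mathcal N$.

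This identity does most of the work. Since $\cosh\dist(q,z)\ge1$, with equality iff $z=q$, the point $q$ is the unique minimizer of $\dist(x,\cdot)$ over $\mathcal N$. Taking $z=q$ gives $\lambda=\cosh\dist(x,q)=\cosh\dist(x,\mathcal N)$. Substituting back yields \eqref{eq:general-projection-triangle}. For smoothness, note that $x\mapsto\Pi_Vx$ is linear and $\lambda=\sqrt{-\innerL{\Pi_Vx}{\Pi_Vx}}$ is a smooth function bounded below by $1$, so the quotient is smooth.

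For the final claim, take an ambient geodesic $\gamma(t)=\cosh t\,a+\sinh t\,b$ with $a\in\HH^D$ and $b\in\T_a\HH^D$ a unit vector. Then $\Pi_V\gamma(t)=\cosh t\,\Pi_Va+\sinh t\,\Pi_Vb$, which lies in the linear span $E=\spanof(\Pi_Va,\Pi_Vb)\subseteq V$. Dividing by $\lambda(t)>0$ keeps the projected curve inside $\HH^D\cap E$. If $\Pi_Va$ and $\Pi_Vb$ are linearly dependent, then $E$ is a line and $\HH^D\cap E$ is a single point. Otherwise $E$ is $2$-dimensional and contains the timelike vector $\Pi_Va$, so $\HH^D\cap E$ is a geodesic, and it lies in $\mathcal N$ since $E\subseteq V$.

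The main (mild) obstacle is the sign and nondegeneracy bookkeeping: $\Pi_V$ must exist, $u$ must be timelike and in the upper sheet, and one must handle the degenerate case in the geodesic statement. Everything else follows from the single identity $\innerL{x}{z}=\innerL{\Pi_Vx}{z}$.
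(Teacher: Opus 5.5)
Your proof is correct. It follows the paper's outline: split $x=\Pi_Vx+w$ with $V^\perp$ positive definite, check the sheet through a negative Lorentz product with a point of $\mathcal N$, and handle geodesics by noting that an ambient geodesic lies in a $2$-plane whose image under $\Pi_V$ has dimension at most $2$. The difference is in how you get the key identity. The paper sets $\delta=\arccosh\lambda$ and writes $x=\cosh\delta\,p+\sinh\delta\,n=\exp_p(\delta n)$, with $n=w/\sinh\delta$ a unit normal to $\mathcal N$ at $p$. It then applies the hyperbolic right-triangle identity to the triangle $(x,p,z)$ and checks $\delta=0$ separately. You instead obtain $\cosh\dist(x,z)=-\innerL{\Pi_Vx}{z}=\lambda\cosh\dist(q,z)$ in one line from $w\perp V$. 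From that single identity you read off the unique minimizer, $\lambda=\cosh\dist(x,\mathcal N)$, and the projection-triangle formula at once. Your route is more self-contained: it proves this case of the hyperbolic Pythagorean theorem rather than citing it, and it needs no case split. You also make explicit that $V$ is nondegenerate, so $\Pi_V$ exists; the paper leaves this implicit. In exchange, the paper's route exhibits the geodesic from $\Proj_{\mathcal N}(x)$ to $x$ meeting $\mathcal N$ perpendicularly. That matches the right-triangle reasoning the paper uses elsewhere, for example when it applies the sine rule at a metric projection onto a hyperplane.
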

\begin{proof}
Choose $z_0\in\mathcal N$. Since $V^\perp\subseteq z_0^\perp=\T_{z_0}\HH^D$, the Lorentz form is positive definite on $V^\perp$: its restriction there is part of the Riemannian metric at $z_0$. For $x\in\HH^D$, write $x=\Pi_Vx+w$ with $w\in V^\perp$. Orthogonality and $\innerL{x}{x}=-1$ give
\[
\lambda^2=-\innerL{\Pi_Vx}{\Pi_Vx}=1+\innerL{w}{w}\ge1.
\]
Thus $p=\Pi_Vx/\lambda$ belongs to $V$ and satisfies $\innerL{p}{p}=-1$. It is on the positive sheet: indeed,
\[
-\innerL{p}{z_0}=-\innerL{x}{z_0}/\lambda
=\cosh\dist(x,z_0)/\lambda>0.
\]
Hence $p\in\mathcal N$.

Set $\delta=\arccosh \lambda$. If $\delta>0$, then $n=w/\sinh\delta$ is a unit vector in $V^\perp$, so it is tangent at $p$ and normal to $\mathcal N$. Formula \eqref{eq:exp} gives
\[
x=\cosh\delta\,p+\sinh\delta\,n=\exp_p(\delta n).
\]
For $z\in\mathcal N$, the geodesic from $p$ to $z$ lies in $\mathcal N$. The triangle $(x,p,z)$ is therefore right at $p$, and \eqref{eq:right-triangle} yields
\[
\cosh\dist(x,z)=\cosh\delta\,\cosh\dist(p,z).
\]
This identity also holds when $\delta=0$, since then $w=0$ and $x=p$. Its right-hand side is minimized uniquely at $z=p$, proving \eqref{eq:metric-projection} and \eqref{eq:general-projection-triangle}. The formula for $p$ is smooth because $\Pi_V$ is linear and $\lambda\ge1$.

Finally, an ambient geodesic lies in a two-dimensional linear subspace $E\subseteq\RR^{D+1}$. Its metric projections lie in $\HH^D\cap\Pi_V(E)$. This intersection is nonempty and $\dim\Pi_V(E)\le2$, so the characterization preceding \eqref{eq:gspan} makes it a geodesic or a point.
\end{proof}

\paragraph{Halfspaces and their distance functions.}
Let $W\ne0$ be tangent at $z\in\HH^d$. The \emph{hyperbolic halfspace} associated with $W$ is
\begin{equation}\tag{halfspace}\label{eq:halfspace}
\begin{split}
L&=\exp_z\{v\in\T_z\HH^d:\inner{W}{v}\ge0\}\\
 &=\{x\in\HH^d:\innerL{W}{x}\ge0\}.
\end{split}
\end{equation}
This equivalence follows from \eqref{eq:exp} and $\innerL{W}{z}=0$. Its boundary $\partial L=\HH^d\cap W^\perp$ is a closed, totally geodesic submanifold of dimension $d-1$, called a \emph{hyperbolic hyperplane}.

Put $\nu=W/\norm{W}$, the inward unit normal at $z$. Regard $\nu$ also as a constant ambient vector. The signed distance to $\partial L$, positive on the $L$ side, and the distance to $L$ are
\begin{equation}\tag{ramp distance}\label{eq:signed-distance}
s_L(x)=\arcsinh\innerL{\nu}{x},
\qquad \dist(x,L)=\positive{-s_L(x)}.
\end{equation}
Here $[t]_+=\max\{t,0\}$. To derive the first identity, let $p=\Proj_{\partial L}(x)$ be the metric projection of $x$ onto $\partial L$. The geodesic from $p$ to $x$ is perpendicular to $\partial L$. If $s$ is its signed length, \eqref{eq:exp} gives
\[
x=\cosh s\,p+\sinh s\,\nu.
\]
Taking the Lorentz inner product with $\nu$ yields $\innerL{\nu}{x}=\sinh s$, proving \eqref{eq:signed-distance}.

Let $y\notin L$ and $\delta=\dist(y,L)$. In the hyperboloid model, the gradient of $\dist(\cdot,L)$ at $y$ is
\begin{equation}\tag{ramp slope}\label{eq:ramp-gradient}
\innerL{\nu}{y}=-\sinh\delta,
\qquad
\nabla\dist(\cdot,L)(y)
=-\frac{\nu-\sinh\delta\,y}{\cosh\delta}.
\end{equation}
This follows by differentiating \eqref{eq:signed-distance}. At $y\in\partial L$, the corresponding subdifferential is $\{-t\nu:0\le t\le1\}$; subdifferentials are defined in \Cref{subsec:gconvex}.

\subsection{Hyperbolic trigonometry}\label{subsec:triangles}
Consider a right triangle with vertices $y_0,y_1,x^*\in\HH^d$, right angle at $y_1$, and angle $\theta\in(0,\pi/2)$ at $y_0$. Write
\[
r_0=\dist(y_0,x^*),\qquad r_1=\dist(y_1,x^*),\qquad
\Delta_0=\dist(y_0,y_1).
\]
Hyperbolic right-triangle identities \citep[Exercise~3.5.4]{Ratcliffe19} give
\begin{equation}\tag{hyperbolic trig}\label{eq:right-triangle}
\sin\theta=\frac{\sinh r_1}{\sinh r_0},\qquad
\cos\theta=\frac{\tanh\Delta_0}{\tanh r_0},\qquad
\cosh r_0=\cosh r_1\cosh\Delta_0.
\end{equation}
For fixed $r_0>0$, expand the first identity as $\cos\theta\to0$. The derivative of $\arcsinh(\sinh r_0\sqrt{1-t})$ at $t=0$ is $-\tanh r_0/2$, so
\begin{align}
r_1&=r_0-\tfrac12\tanh r_0\cos^2\theta+O_{r_0}(\cos^4\theta),\notag\\
r_1^2&=r_0^2\left(1-\frac{\cos^2\theta}{\zeta_{r_0}}\right)
+O_{r_0}(\cos^4\theta).\label{eq:triangle-expansion}
\end{align}
In Euclidean space, $r_1^2=r_0^2(1-\cos^2\theta)$ exactly. We conclude that the side opposite $\theta$ is longer in hyperbolic space. The appearance of $\zeta_{r_0}$ in this comparison is precisely what produces the factor $\zeta_r$ in our $\Omega(\zeta_r/\eps^2)$ lower bound; see \Cref{lem:geometry} and the proof of \Cref{thm:A1}, where that lemma is used.

\insight{The hyperbolic residual is longer}{For small $\cos\theta$, a right triangle with hypotenuse $r_0$ has
\[
\begin{array}{ll}
\text{hyperbolic:}&r_1^2\approx r_0^2\bigl(1-\cos^2\theta/\zeta_{r_0}\bigr),\\[2pt]
\text{Euclidean:}&r_1^2=r_0^2(1-\cos^2\theta).
\end{array}
\]
}

\subsection{Geodesic convexity and sharp first-order supports}\label{subsec:gconvex}
A subset of a Hadamard manifold is \emph{geodesically convex} (\emph{g-convex}) if it contains the geodesic segment between each pair of its points. In $\HH^d$, balls $B(x,r)$, hyperbolic halfspaces \eqref{eq:halfspace}, and totally geodesic submanifolds, including hyperbolic hyperplanes, are g-convex \citep[Lemmas~1 and 19]{CB23}. In contrast, the exponential image of a tangent halfspace need not be g-convex on a general Hadamard manifold.

A function $f:\calM\to\RR$ is \emph{g-convex} if its restriction to every constant-speed geodesic is convex. We say that $f$ is \emph{$M$-Lipschitz} if $|f(x)-f(y)|\le M\dist(x,y)$ for all $x,y\in\calM$. A vector $g\in\T_y\calM$ is a \emph{subgradient} of $f$ at $y$ if
\begin{equation}\tag{first-order support}\label{eq:subgradient}
f(x)\ge f(y)+\inner{g}{\log_y(x)}\qquad\forall x\in\calM.
\end{equation}
The \emph{subdifferential} at $y$, denoted $\partial f(y)$, is the set of such tangent vectors. For finite g-convex functions on a Hadamard manifold, the subdifferential is always nonempty. If $f$ is globally $M$-Lipschitz, every subgradient has norm at most $M$. References for geodesic convexity include \citet{Udriste94} and \citet[Chapter~11]{Boumal23}.

Distance to a nonempty closed g-convex set is g-convex and $1$-Lipschitz on a Hadamard manifold \citep[Chapter~3, Section~11]{Udriste94}. Hence our ramps and distance-to-point functions are g-convex. A maximum of g-convex functions is g-convex. Let $P=\max_i p_i$ for finitely many g-convex functions $p_i$, and let $A=\{i:p_i(y)=P(y)\}$ be the set of active indices at $y$. If $q_i\in\partial p_i(y)$ for $i\in A$, then
\begin{equation}\tag{subdiff of max}\label{eq:active-combination}
\sum_{i\in A}\omega_i q_i\in\partial P(y)
\quad\text{for}\quad
\omega_i\ge0,\qquad\sum_{i\in A}\omega_i=1.
\end{equation}
Indeed, average the inequalities $P(x)\ge p_i(x)\ge P(y)+\inner{q_i}{\log_y(x)}$.

\paragraph{Why affine supports do not transfer.}
In Euclidean space, affine functions are the building blocks of convexity: every finite convex function is the supremum of its affine supports, and every finite maximum of affine functions is convex. For g-convexity, the lower bound \eqref{eq:subgradient} remains valid, but its right-hand side is not generally g-convex as a function of $x$. Taking maxima of first-order supports therefore does not automatically yield a g-convex function. This is the main obstacle to transferring Euclidean worst-function constructions to Hadamard manifolds; see \citet[Section~8 and Appendix~C]{CB23}. Nevertheless, these lower support functions are pointwise sharp.

\begin{proposition}[Pointwise sharpness of a first-order support, {\citealp[Proposition~35]{CB23}}]\label{prop:sharp-support}
Let $d\ge2$, $F\in\RR$, $y\in\HH^d$, and $g\in\T_y\HH^d$. For every $x\in\HH^d$,
\[
\min\{f(x):f:\HH^d\to\RR\text{ is g-convex},\ f(y)=F,\ g\in\partial f(y)\}
=F+\inner{g}{\log_y(x)}.
\]
The minimum can be attained by a nonnegative combination of distance to a point and distance to a hyperbolic halfspace, plus a constant. We emphasize that the minimizing function must, in general, depend on $x$.
\end{proposition}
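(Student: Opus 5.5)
The inequality ``$\ge$'' is immediate from the definition of a subgradient. Let $f$ be g-convex with $f(y)=F$ and $g\in\partial f(y)$. Then \eqref{eq:subgradient} gives $f(x)\ge F+\inner{g}{\log_y(x)}$ for every admissible $f$. So it suffices to exhibit, for each fixed $x$, an admissible $f$ attaining equality at $x$. We may assume $x\ne y$; if $x=y$, the constant $F$ works when $g=0$, and for $g\ne0$ we use the construction below with any $x$ and then note that equality at $y$ holds trivially. We may also assume $g\ne0$, since otherwise the constant $F$ attains the bound. Write $u=\log_y(x)$, $\rho=\norm{u}=\dist(x,y)$, and decompose $g=a\,u/\rho+b\,w$, where $w$ is a unit vector orthogonal to $u$ in $\T_y\HH^d$. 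The case $b=0$ uses the plane spanned by $u$ and any orthogonal unit vector; this exists since $d\ge2$. The target value is then $F+a\rho$.

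We try the ansatz
\[
f(z)=F+\lambda\bigl(\dist(z,p)-\dist(y,p)\bigr)+\mu\,\dist(z,L),\qquad \lambda,\mu\ge0,
\]
where $p$ is a point and $L$ a halfspace with $y\in\partial L$, chosen inside the totally geodesic plane $\gspan(y,u,w)$ (extended orthogonally). Each term is g-convex by the facts in \Cref{subsec:gconvex}, so $f$ is g-convex with $f(y)=F$.

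By \eqref{eq:ramp-gradient} and the subdifferential of a distance function, $\partial f(y)$ contains every vector
\[
-\lambda\,\frac{\log_y p}{\norm{\log_y p}}-t\mu\,\nu,\qquad t\in[0,1],
\]
where $\nu$ is the inward normal of $L$ at $y$. If the ramp term is dropped ($\mu=0$), we need $-\lambda\log_y p/\norm{\log_y p}=g$. This places $p$ on the geodesic from $y$ in direction $-g$, so $f(z)=F+\norm{g}(\dist(z,p)-\dist(y,p))$. Letting $p$ recede to infinity along that ray, $f(x)$ tends to the Busemann value, which in hyperbolic space is strictly larger than $F+\inner{g}{u}$ unless $g\parallel u$. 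So we use the ramp.

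Choose $L$ so that $x\in L$. Then the ramp vanishes at $x$ and does not affect $f(x)$. With $g=\lambda e+\mu\nu'$, where $e=-\log_y p/\norm{\log_y p}$ and $\nu'=-\nu$ is the outward normal, the condition $f(x)=F+\inner{g}{u}$ becomes
\[
\lambda\bigl(\dist(x,p)-\dist(y,p)\bigr)=\lambda\inner{e}{u}+\mu\inner{\nu'}{u}.
\]
The natural choice is $\partial L$ containing the geodesic through $y$ and $x$, so that $\inner{\nu'}{u}=0$ and $x\in\partial L\subseteq L$. Then the condition reduces to $\dist(x,p)-\dist(y,p)=\inner{e}{u}$, i.e.\ equality in the first-order expansion of $\dist(\cdot,p)$ at $y$ in direction $u$. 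This holds exactly when $p$ lies on the geodesic through $y$ and $x$, on the far side of $y$ from $x$: then $\dist(x,p)=\dist(y,p)+\rho$ and $e=u/\rho$. So set $p=\exp_y(-s\,u/\rho)$ for some $s>0$, $\lambda=a$, and let $\nu'$ be parallel to $\pm w$ with $\mu=|b|$. This requires $a\ge0$.

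The main obstacle is the case $a<0$, where $g$ points partly away from $x$. There I would swap roles: put $p$ on the segment beyond $x$, i.e.\ $p=\exp_y(s u/\rho)$ with $s\ge\rho$, so that $\dist(x,p)-\dist(y,p)=-\rho$, and take $\lambda=-a$. This again gives equality $\lambda(-\rho)=a\rho=\inner{g}{u}$, with $e=u/\rho\cdot(-1)\cdot(-1)$ consistent, and the ramp still handles $b\,w$. If the sign bookkeeping fails there, I would instead tilt $\partial L$ so that $\inner{\nu'}{u}\ne0$ and solve the resulting two-parameter equation using \eqref{eq:right-triangle}. Finally, I would verify that $p\ne y$ (guaranteed since $s>0$), and that $f$ is g-convex and real-valued. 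This confirms that the minimizer is a nonnegative combination of a point-distance and a halfspace-distance plus a constant, and that it depends on $x$ through $p$ and $L$.
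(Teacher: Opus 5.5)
Your proof is correct and takes essentially the paper's route. The extremal function is a weighted distance to a point $p$ on the geodesic through $y$ and $x$: behind $y$ when $\inner{g}{\log_y(x)}>0$, and at or beyond $x$ otherwise. The paper takes $p=\exp_y(-\log_y(x))$ or $p=x$, which is your choice $s=\rho$. To this is added $\norm{g_\perp}$ times the distance to a halfspace whose boundary contains that geodesic.

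The only slip is in the sign bookkeeping for $a<0$. There $e=-u/\rho$, not $u/\rho$, so $\lambda e=(-a)(-u/\rho)=au/\rho$ and the argument closes; the fallback of tilting $\partial L$ is unnecessary. Relatedly, equality in the first-order expansion of $\dist(\cdot,p)$ holds for both placements of $p$, not ``exactly'' when $p$ is behind $y$.
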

\citet[Proposition~35]{CB23} prove the pointwise sharpness statement on every Hadamard manifold. We repeat their proof with a minor adaptation, because it motivates the hard-function ansatz used below.
\begin{proof}
The lower bound follows from \eqref{eq:subgradient}. First suppose $x\ne y$, and decompose
\[
g_\parallel
=\frac{\inner{g}{\log_y(x)}}{\dist(x,y)^2}\log_y(x),
\qquad g_\perp=g-g_\parallel.
\]
Choose
\[
x'=\begin{cases}
x,&\inner{g}{\log_y(x)}\le0,\\
\exp_y(-\log_y(x)),&\inner{g}{\log_y(x)}>0.
\end{cases}
\]
When $g_\perp\ne0$, define the hyperbolic halfspace
\[
L=\exp_y\{v\in\T_y\HH^d:\inner{g_\perp}{v}\le0\}.
\]
Its boundary contains the complete geodesic through $y$ and $x$, because $g_\perp$ is orthogonal to $\log_y(x)$. When $g_\perp=0$, choose any hyperbolic halfspace whose boundary contains that geodesic. In either case, $y,x,x'\in\partial L$. Define
\begin{equation}\tag{sharp support}\label{eq:sharp-support-function}
f(z)=F+\norm{g_\parallel}\bigl(\dist(z,x')-\dist(y,x')\bigr)
+\norm{g_\perp}\dist(z,L).
\end{equation}

Both distance terms are g-convex, and $f(y)=F$. The point $x'$ is distinct from $y$, so the first distance term is differentiable at $y$, with gradient
\[
-\frac{\norm{g_\parallel}}{\dist(y,x')}\log_y(x')=g_\parallel.
\]
This formula also holds when $g_\parallel=0$, since that term is constant. If $g_\perp\ne0$, the outward unit normal to $L$ at $y$ is $g_\perp/\norm{g_\perp}$. The boundary subgradient formula following \eqref{eq:ramp-gradient} therefore gives
\[
g_\perp\in\partial\bigl(\norm{g_\perp}\dist(\cdot,L)\bigr)(y).
\]
Adding the two supporting inequalities proves $g=g_\parallel+g_\perp\in\partial f(y)$.

At the target $x$, the ramp vanishes. If $x'=x$, \eqref{eq:sharp-support-function} gives $f(x)=F-\norm{g_\parallel}\dist(x,y)$. Otherwise $y$ is the midpoint of the geodesic from $x$ to $x'$, so $\dist(x,x')=2\dist(x,y)$ and $f(x)=F+\norm{g_\parallel}\dist(x,y)$. By the choice of $x'$, both cases give $f(x)=F+\inner{g}{\log_y(x)}$.

For $x=y$, the function $z\mapsto F+\norm{g}\dist(z,y)$ has value $F$ at $y$ and subgradient $g$ there, so the same conclusion holds.
\end{proof}

The realizing function \eqref{eq:sharp-support-function} is exactly a weighted distance-to-point function plus a distance-to-halfspace correction. The distance-to-point term supplies the component of the prescribed subgradient along the geodesic to the target; the ramp supplies its perpendicular component without changing the value at the target. This is the mechanism underlying the hard-function ansatz of \citet[Appendix~A and Remark~36]{CB23}, and it also motivates the hard function used here. We summarize it as follows.

\insight{Distance corrections attain first-order supports}{At any prescribed target, the first-order lower bound \eqref{eq:subgradient} is attained by a nonnegative combination of distance to a point and distance to a hyperbolic halfspace, plus a constant; see \eqref{eq:sharp-support-function}.}

\subsection{Busemann functions}\label{subsec:busemann}
This subsection is used only in \Cref{sec:unrestricted}, when removing the geodesic-span assumption. For a unit-speed geodesic ray $\gamma$ in a Hadamard manifold, its \emph{Busemann function} is
\[
b_\gamma(x)=\lim_{t\to\infty}\bigl(\dist(x,\gamma(t))-t\bigr).
\]
Busemann functions are g-convex and $1$-Lipschitz \citep[Chapter~II.8]{BH99}; on hyperbolic space they are also smooth, as the formula below shows. In Euclidean space, they are exactly the affine functions with unit-norm gradient: the ray $\gamma(t)=y+tv$, with $\norm v=1$, gives $b_\gamma(x)=-\inner{v}{x-y}$.

Busemann functions thus generalize affine functions and are fundamental in the geometry of Hadamard manifolds and noncompact symmetric spaces; see \citet[Chapters~II.8--9]{BH99}. They also enter applications of g-convex optimization: weighted Busemann functions describe stability at infinity \citep{KLM09}, while related asymptotic and dual constructions apply to scaling problems \citep{Hirai24}.

\paragraph{The hyperboloid formula.}
A nonzero vector $\xi\in\RR^{D+1}$ is \emph{future-pointing null} if $\xi_0>0$ and $\innerL{\xi}{\xi}=0$. Its positive multiples represent the same ideal endpoint of $\HH^D$, that is, a point on its boundary at infinity \citep[Chapters~II.8--9]{BH99}. Up to an additive constant, the associated Busemann function is
\begin{equation}\tag{Busemann function}\label{eq:busemann}
b_\xi(x)=\log\bigl(-\innerL{x}{\xi}\bigr);
\end{equation}
see, for example, \citet[Proposition~3.3]{BCCD23}. Indeed, let $y\in\HH^D$, let $v\in\T_y\HH^D$ be unit, and take $\xi=y+v$. Applying \eqref{eq:hyperboloid-distance} to the ray $\gamma(t)=\cosh t\,y+\sinh t\,v$, and using $\arccosh u=\log(2u)+o(1)$ as $u\to\infty$, gives
\[
\dist(x,\gamma(t))-t\longrightarrow\log\bigl(-\innerL{x}{y+v}\bigr).
\]
Conversely, every future-pointing null vector $\xi$ has the form $\xi=c(y+v)$ for some $c>0$, $y\in\HH^D$, and unit vector $v\in\T_y\HH^D$.

A geodesic $S\subseteq\HH^D$ and a point $y\in S$ determine two opposite geodesic rays emanating from $y$, and hence two ``opposite'' Busemann functions. The next lemma relates them to metric projection onto their shared geodesic $S$.
\begin{lemma}[Busemann coordinates around a geodesic]\label{lem:busemann-coordinates}
Let $y\in\HH^D$ and $v\in\T_y\HH^D$ be unit, and set
\[
S=\{\ell(s):s\in\RR\},\qquad
\ell(s)=\exp_y(sv)=\cosh s\,y+\sinh s\,v.
\]
For $x\in\HH^D$, write $\Proj_S(x)=\ell(s(x))$ and $\delta(x)=\dist(x,S)$. Then
\begin{equation}\tag{opposite Busemanns}\label{eq:opposite-busemanns}
\begin{aligned}
b_{y-v}(x)&=s(x)+\log\cosh\delta(x),\\
b_{y+v}(x)&=-s(x)+\log\cosh\delta(x).
\end{aligned}
\end{equation}
Here $s(x)=\inner{v}{\log_y(\Proj_S(x))}$.
\end{lemma}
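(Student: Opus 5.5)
The plan is a direct computation in the hyperboloid model, using \Cref{lem:hyperbolic-projection} with $\mathcal N=S=\HH^D\cap\spanof(y,v)$. Write $p=\Proj_S(x)=\ell(s)$ with $s=s(x)$ and $\delta=\delta(x)$. By \eqref{eq:metric-projection} we have $\Pi_Vx=\cosh\delta\,p$, where $V=\spanof(y,v)$. Since $y\pm v\in V$, the Lorentz pairing of $x$ with $y\pm v$ only sees the component $\Pi_Vx$:
\[
\innerL{x}{y\pm v}=\innerL{\Pi_Vx}{y\pm v}=\cosh\delta\,\innerL{\ell(s)}{y\pm v}.
\]

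Next I compute $\innerL{\ell(s)}{y\pm v}$. This uses $\innerL{y}{y}=-1$, $\innerL{v}{v}=1$ and $\innerL{y}{v}=0$:
\[
\innerL{\cosh s\,y+\sinh s\,v}{y+v}=-\cosh s+\sinh s=-e^{-s},
\qquad
\innerL{\ell(s)}{y-v}=-\cosh s-\sinh s=-e^{s}.
\]
Note that $y\pm v$ are future-pointing null vectors. They are null by the same identities. They are future-pointing because $-\innerL{x}{y\pm v}>0$ for all $x$, which is exactly what the formula above shows. Plugging into \eqref{eq:busemann} gives
\[
b_{y-v}(x)=\log(\cosh\delta\,e^{s})=s+\log\cosh\delta,
\qquad
b_{y+v}(x)=-s+\log\cosh\delta,
\]
which is \eqref{eq:opposite-busemanns}.

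Finally, I check that $s(x)=\inner{v}{\log_y(\Proj_S(x))}$. By \eqref{eq:exp}, $\log_y(\ell(s))=sv$, since $\ell$ is the unit-speed geodesic $s\mapsto\exp_y(sv)$. Pairing with the unit vector $v$ gives $s$, so the parametrization $p=\ell(s(x))$ is well defined and agrees with the stated formula.

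The only subtle point is the normalization of the Busemann functions. Formula \eqref{eq:busemann} is stated only up to an additive constant. The lemma implicitly fixes the representative $b_\xi(x)=\log(-\innerL{x}{\xi})$ with $\xi=y\mp v$ exactly, and the derivation in \Cref{subsec:busemann} gives precisely this constant-free form for $\xi=y+v$. I would state this convention explicitly.

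I would also confirm the sign convention. By the derivation there, $\xi=y+v$ corresponds to the ray $\gamma(t)=\exp_y(tv)$. Hence $b_{y+v}$ decreases along $+v$, consistent with the formula $-s+\log\cosh\delta$. Beyond these conventions, everything is routine algebra.
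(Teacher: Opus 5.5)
Your proof is correct and is essentially the paper's argument: you use \Cref{lem:hyperbolic-projection} to write $\Pi_Vx=\cosh\delta\,p$, pair with $y\pm v\in V$ to get $-\innerL{x}{y\pm v}=\cosh\delta\,e^{\mp s}$, and take logarithms. Your extra checks are accurate, but they only make explicit what the paper leaves implicit: that $y\pm v$ are null and future-pointing, that $b_\xi$ is the constant-free representative, and that $\log_y(\ell(s))=sv$.
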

\begin{proof}
Put $p=\ell(s(x))$ and $V=\spanof(y,v)$. By \eqref{eq:metric-projection}, $\Pi_Vx=\cosh\delta(x)\,p$. Since $y\pm v\in V$,
\[
-\innerL{x}{y\pm v}
=-\innerL{\Pi_Vx}{y\pm v}
=\cosh\delta(x)\bigl(-\innerL{p}{y\pm v}\bigr)
=\cosh\delta(x)e^{\mp s(x)}.
\]
The last equality uses $\innerL{p}{y\pm v}=-\cosh s(x)\pm\sinh s(x)=-e^{\mp s(x)}$. Taking logarithms proves both identities.
\end{proof}
Thus half the difference of the opposite Busemann functions gives position along $S$, while their average gives $\log\cosh\dist(\cdot,S)$. We use these identities to show that the lifting in \Cref{lem:lifting} is g-convex.

\begin{remark}[Why not use maxima of Busemann functions?]\label{rem:horospherical}
Nonnegatively scaled Busemann functions are g-convex and generalize affine functions, so why not build hard functions as their maxima? Finite-valued suprema of shifted, nonnegatively scaled Busemann functions form a \emph{strict} subclass of g-convex functions: the \emph{Busemann subdifferentiable}, or \emph{horospherically convex}, functions. For Lipschitz objectives in this subclass, algorithms have \emph{curvature-independent} $O(\eps^{-2})$ guarantees \citep{LLN24,GLLN26,CK25}. Thus maxima of Busemann functions \emph{cannot} yield curvature-dependent lower bounds. Here we use Busemann functions to prove \Cref{lem:lifting}, not to construct the hard function in \Cref{sec:construction}.
\end{remark}

\section{The nested geometry of Criscitiello and Boumal}\label{sec:geometry}
Fix $d\ge2$, $r>0$, and $\theta\in(0,\pi/2)$. We recall the inductive geometric construction in \citet[Appendix~A.2]{CB23}. Its objects are defined here both intrinsically and in the hyperboloid model; their stated geometric properties are those proved in that appendix. \Cref{tab:geometry-notation} summarizes the notation.

\begin{table}[!htbp]
\centering
\small
\renewcommand{\arraystretch}{1.12}
\begin{tabularx}{0.96\linewidth}{@{}lX@{}}
\toprule
\textbf{Symbol} & \textbf{Role} \\
\midrule
$I_k$ & Information submanifold. \\
$H_k$ & Totally geodesic submanifold normal to $I_k$ at $y_k$. \\
$y_k$ & Common metric projection of the points in $S_k$ onto $I_k$. \\
$e_i^{(k)}$ & Orthonormal frame at $y_k$, obtained by parallel transport. \\
$S_k$ & Sphere of remaining candidate minimizers. \\
$B_k$ & Closed ball in $H_k$ with boundary $S_k$. \\
$r_k$ & Uncertainty radius. \\
$\Delta_k$ & Step length $\dist(y_k,y_{k+1})$. \\
$x^*$ & Selected minimizer, belonging to $S_{d-1}$. \\
$u_k$ & Unit direction from $y_k$ toward the selected minimizer $x^*$. \\
$W_{j,\sigma}$ & Inward normal of a reflected correction halfspace (\Cref{subsec:ansatz}). \\
$L_{j,\sigma}$ & Reflected correction halfspace with inward normal $W_{j,\sigma}$ (\Cref{subsec:ansatz}). \\
\bottomrule
\end{tabularx}
\caption{Objects in the nested geometry and the subsequent ramp construction.}\label{tab:geometry-notation}
\end{table}

\paragraph{Base case.} Start with $y_0=\xref$, $I_0=\{y_0\}$, $H_0=\HH^d$, $r_0=r$, $B_0=B(y_0,r)$, and $S_0=\partial B_0$. Choose an orthonormal basis $e_1^{(0)},\ldots,e_d^{(0)}$ of $\T_{y_0}\HH^d$. Without loss of generality, choose hyperboloid coordinates so that
\[
y_0=\xref=e_0,\qquad e_i^{(0)}=e_i\quad(1\le i\le d),
\]
where $e_0,\ldots,e_d$ denote the standard basis of $\RR^{d+1}$.

\paragraph{Inductive step.} Suppose the objects have been constructed through stage $k$, where $0\le k\le d-2$. Inspired by \eqref{eq:right-triangle}, define $\Delta_k,r_{k+1}>0$ by
\begin{equation}\tag{def $r_k,\Delta_k$}\label{eq:recurrence}
\tanh\Delta_k=\cos\theta\tanh r_k,
\qquad
\sinh r_{k+1}=\sin\theta\sinh r_k.
\end{equation}
These satisfy $\cosh r_k=\cosh r_{k+1}\cosh\Delta_k$. Set
\begin{equation}\tag{def $y_k,e_i^{(k)}$}\label{eq:centers-frame}
y_{k+1}=\exp_{y_k}(\Delta_ke_{k+1}^{(k)}),
\qquad
e_i^{(k+1)}=P_{y_k\to y_{k+1}}e_i^{(k)}\quad(1\le i\le d).
\end{equation}
At each stage we define the information submanifold and its normal counterpart by
\begin{equation}\tag{def $I_k,H_k$}\label{eq:information-spaces}
I_k=\gspan(y_0,\ldots,y_k),\qquad
H_k=\gspan(y_k,e_{k+1}^{(k)},\ldots,e_d^{(k)}).
\end{equation}
Also define the set of remaining candidate minimizers by
\begin{equation}\tag{def $S_k$}\label{eq:Sk}
S_{k+1}=\{x\in S_k:
\inner{e_{k+1}^{(k)}}{\log_{y_k}(x)}=r_k\cos\theta\}.
\end{equation}
Thus the direction toward a remaining candidate makes angle $\theta$ with the next information direction.

In the hyperboloid model, these definitions give the following coordinate formulas \citep[Appendix~A.2, equation~(8) and Lemma~25]{CB23}:
\begin{align}
y_{k+1}&=\cosh\Delta_k\,y_k+\sinh\Delta_k\,e_{k+1},\notag\\
e_{k+1}^{(k+1)}&=\sinh\Delta_k\,y_k+\cosh\Delta_k\,e_{k+1},\notag\\
e_i^{(k)}&=e_i\quad(i>k),\tag{unexposed frame}\label{eq:unexposed-frame}\\
I_k&=\HH^d\cap\spanof(e_0,\ldots,e_k),\tag{information submanifolds}\label{eq:spaces-coord}\\
H_k&=\HH^d\cap\spanof(y_k,e_{k+1},\ldots,e_d)
=\HH^d\cap\bigcap_{i=1}^k(e_i^{(i)})^\perp.\notag
\end{align}
All coordinate orthogonal complements use the Lorentz form. To justify \eqref{eq:unexposed-frame}, consider one step from $y_q$ to $y_{q+1}$. The geodesic connecting $y_q$ and $y_{q+1}$ lies in $\spanof(y_q,e_{q+1})$, by \eqref{eq:exp} and \eqref{eq:centers-frame}. For $i>q+1$, the constant ambient vector $e_i$ is tangent to $\HH^d$ along that geodesic and has zero ambient derivative, hence zero covariant derivative. It is therefore unchanged by parallel transport; see also \citet[Appendix~A.1, equation~(5)]{CB23}. Induction from $e_i^{(0)}=e_i$ gives \eqref{eq:unexposed-frame}.

\begin{lemma}[Nested geometry, {\citealp[Lemmas~24--26]{CB23}}]\label{lem:geometry}
The totally geodesic submanifolds $I_k$ and $H_k$ have dimensions $k$ and $d-k$, meet at $y_k$, and have orthogonal tangent spaces there. The set $S_k$ is the (hyperbolic) sphere of radius $r_k$ centered at $y_k$ in $H_k$. If $B_k$ is the corresponding closed ball in $H_k$, then
\[
I_0\subseteq\cdots\subseteq I_{d-1},\qquad
H_0\supseteq\cdots\supseteq H_{d-1},\qquad
B_0\supseteq\cdots\supseteq B_{d-1},\qquad
S_0\supseteq\cdots\supseteq S_{d-1}.
\]

For every $x^*\in S_k$, the metric projection of $x^*$ onto $I_k$ is $y_k$. Moreover, for $0\le k\le d-2$ and every $x^*\in S_{k+1}$, the triangle
\[
(y_k,y_{k+1},x^*)
\]
has angle $\theta$ at $y_k$ and right angle at $y_{k+1}$. Therefore, using \eqref{eq:right-triangle},
\begin{equation}\tag{uncertainty radii}\label{eq:radii}
\sinh r_k=(\sin\theta)^k\sinh r,
\qquad
r_k\ge r/2\quad\text{if}\quad\cos^2\theta\le\tfrac12\text{ and }k\le\frac{\zeta_r}{2\cos^2\theta}.
\end{equation}
\end{lemma}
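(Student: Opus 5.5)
The plan is to work in the hyperboloid coordinates fixed in the base case, where \eqref{eq:unexposed-frame} and \eqref{eq:spaces-coord} are available, and to obtain all geometric claims by induction on $k$ from these formulas. The radius bound then follows separately by a one-variable estimate on \eqref{eq:radii}.

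\emph{Step 1: structure of $I_k$ and $H_k$.} By \eqref{eq:spaces-coord}, $I_k=\HH^d\cap\spanof(e_0,\ldots,e_k)$ has dimension $k$, and $H_k=\HH^d\cap\spanof(y_k,e_{k+1},\ldots,e_d)$ has dimension $d-k$. The point $y_k$ lies in $\spanof(e_0,\ldots,e_k)$, which one checks by induction from the formula for $y_{k+1}$. Hence $y_k\in I_k\cap H_k$. At $y_k$, the tangent space of $I_k$ is $y_k^\perp\cap\spanof(e_0,\ldots,e_k)$ and that of $H_k$ is $\spanof(e_{k+1},\ldots,e_d)$. These are Lorentz-orthogonal, and together they give the orthogonal splitting of $\T_{y_k}\HH^d$. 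Nestedness of the $I_k$ is immediate. For $H_{k+1}\subseteq H_k$, I would use $\spanof(y_{k+1},e_{k+2},\ldots)\subseteq\spanof(y_k,e_{k+1},\ldots)$.

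\emph{Step 2: $S_k$ is a sphere in $H_k$, and $y_k$ is the projection.} This is an induction on $k$.
\begin{itemize}[leftmargin=*]
\item Suppose $S_k$ is the sphere of radius $r_k$ about $y_k$ in $H_k$. Write $x\in S_k$ as $x=\cosh r_k\,y_k+\sinh r_k\,u$, with $u$ a unit vector in $\spanof(e_{k+1},\ldots,e_d)$.
\item The defining condition of $S_{k+1}$ says $\inner{e_{k+1}}{u}=\cos\theta$.
\item Then $\innerL{x}{e^{(k+1)}_{k+1}}=\sinh\Delta_k\cosh r_k\cdot(-1)\cdot(-1)\ldots$. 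I would expand this directly, and it vanishes exactly when $\tanh\Delta_k=\cos\theta\tanh r_k$, using \eqref{eq:recurrence}. So $S_{k+1}\subseteq H_{k+1}$, since $H_{k+1}=H_k\cap(e^{(k+1)}_{k+1})^\perp$.
\item Next compute $-\innerL{x}{y_{k+1}}=\cosh\Delta_k\cosh r_k-\sinh\Delta_k\sinh r_k\cos\theta$. Using \eqref{eq:recurrence}, this equals $\cosh r_k/\cosh\Delta_k=\cosh r_{k+1}$.
\item Conversely, every point of $H_{k+1}$ at distance $r_{k+1}$ from $y_{k+1}$ arises this way, by reversing the computation.
\end{itemize}
For the projection claim, I would apply \Cref{lem:hyperbolic-projection} with $V=\spanof(e_0,\ldots,e_k)$. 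The point $x^*=\cosh r_k\,y_k+\sinh r_k\,u$ has $\Pi_Vx^*=\cosh r_k\,y_k$, so its projection onto $I_k$ is $y_k$. The balls $B_k$ are nested by the same computation applied at radii $\le r_k$, or by g-convexity.

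\emph{Step 3: the triangle.} Take $x^*\in S_{k+1}$.
\begin{itemize}[leftmargin=*]
\item The angle at $y_k$ is $\theta$, because $\log_{y_k}(x^*)=r_ku$ and $\log_{y_k}(y_{k+1})=\Delta_ke_{k+1}$, with $\inner{u}{e_{k+1}}=\cos\theta$.
\item The angle at $y_{k+1}$ is right. Indeed, $\log_{y_{k+1}}(x^*)$ is tangent to $H_{k+1}$, while the geodesic to $y_k$ lies in $I_{k+1}$, and these tangent spaces are orthogonal by Step 1.
\end{itemize}
Then \eqref{eq:right-triangle} gives $\sinh r_{k+1}=\sin\theta\sinh r_k$, which is consistent with \eqref{eq:recurrence}, and iterating gives $\sinh r_k=(\sin\theta)^k\sinh r$.

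\emph{Step 4: the bound $r_k\ge r/2$, which is the main obstacle.} We need $(\sin\theta)^k\sinh r\ge\sinh(r/2)$. Take logarithms and use $\log\sin\theta=\tfrac12\log(1-\cos^2\theta)\ge-\tfrac{\cos^2\theta}{2}\cdot c$, where $c$ is a constant valid for $\cos^2\theta\le\tfrac12$. More precisely, $-\log(1-t)\le t/(1-t)\le 2t$, so $\log\sin\theta\ge-\cos^2\theta$. It then suffices that
\[
k\cos^2\theta\le\log\frac{\sinh r}{\sinh(r/2)}=\log\bigl(2\cosh(r/2)\bigr).
\]
Given $k\le\zeta_r/(2\cos^2\theta)$, it remains to show the elementary inequality $\tfrac12\,r/\tanh r\le\log(2\cosh(r/2))$ for all $r>0$. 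As $r\to0$, this reads $1/2\le\log2$. For large $r$, it reads $r/2\le r/2+\log(1+e^{-r})$. In between I would check it by a derivative comparison. If the constant $2$ in $-\log(1-t)\le2t$ proves too lossy near the crossover, I would use the sharper $\tfrac12\log(1-t)\ge-t/(2(1-t))$ and redo the comparison.
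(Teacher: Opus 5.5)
Your proposal is correct, and it takes a genuinely different route from the paper, because the paper gives no proof of this lemma. It imports all of these properties from \citet[Lemmas~24--26]{CB23} and \citet[Appendix~A.2]{CB23}, and justifies only the coordinate identity \eqref{eq:unexposed-frame} in the text. You instead rederive everything from the hyperboloid formulas \eqref{eq:unexposed-frame}--\eqref{eq:spaces-coord} and \Cref{lem:hyperbolic-projection}.

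What your route buys is self-containedness: every claim reduces to a Lorentz inner-product computation. For instance, the expansion your Step~2 garbles is, for $x=\cosh r_k\,y_k+\sinh r_k\,u\in S_k$,
\[
\innerL{x}{e^{(k+1)}_{k+1}}=\sinh r_k\cosh\Delta_k\inner{u}{e_{k+1}}-\cosh r_k\sinh\Delta_k .
\]
This vanishes exactly when $\inner{u}{e_{k+1}}=\cos\theta$, so $S_{k+1}=S_k\cap H_{k+1}$ in one line. Pythagoras at $y_{k+1}$, namely $\cosh\dist(y_k,z)=\cosh\Delta_k\cosh\dist(y_{k+1},z)$ for $z\in H_{k+1}$, then identifies $S_k\cap H_{k+1}$ with the sphere of radius $r_{k+1}$ in $H_{k+1}$. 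The same identity gives $B_{k+1}\subseteq B_k$, which is cleaner than your appeal to g-convexity.

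Two small items remain to finish. First, ``meet at $y_k$'' should be shown as $I_k\cap H_k=\{y_k\}$. This follows from $\spanof(e_0,\ldots,e_k)\cap\spanof(y_k,e_{k+1},\ldots,e_d)=\RR y_k$.

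Second, in Step~4 you leave the scalar inequality $\zeta_r/2\le\log(2\cosh(r/2))$ to an unspecified derivative comparison. It holds for all $r>0$, so the constant $2$ in $-\log(1-t)\le 2t$ is not too lossy and no fallback is needed. Subtracting $r/2$ from both sides reduces it to
\[
\frac{r}{e^{2r}-1}\le\log\bigl(1+e^{-r}\bigr).
\]
With $s=e^{-r}$ and $r\le e^r-1$, this follows from
\[
\frac{r}{e^{2r}-1}=\frac{r}{(e^r-1)(e^r+1)}\le\frac{1}{e^r+1}=\frac{s}{1+s}\le\log(1+s).
\]
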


The radius $r_k$ is the radius of the remaining candidate sphere $S_k\subseteq H_k$ and, by \Cref{lem:geometry}, the distance from every remaining candidate minimizer to $I_k$. We therefore call the $r_k$ the \emph{uncertainty radii}: their slow decrease will prevent queries confined to $I_k$ from approaching the minimizer.

The estimate \eqref{eq:radii} is the iterative version of the slow radius decay highlighted in \Cref{subsec:triangles}. In Euclidean space, the same construction has $r_k=r(\sin\theta)^k$ and $\Delta_k=r_k\cos\theta$, so the corresponding horizon is only of order $1/\cos^2\theta$. \Cref{fig:overview} illustrates the nested geometry for $d=3$ in this Euclidean analogue.

\begin{figure}[!htbp]
\centering
\includegraphics[width=.88\linewidth]{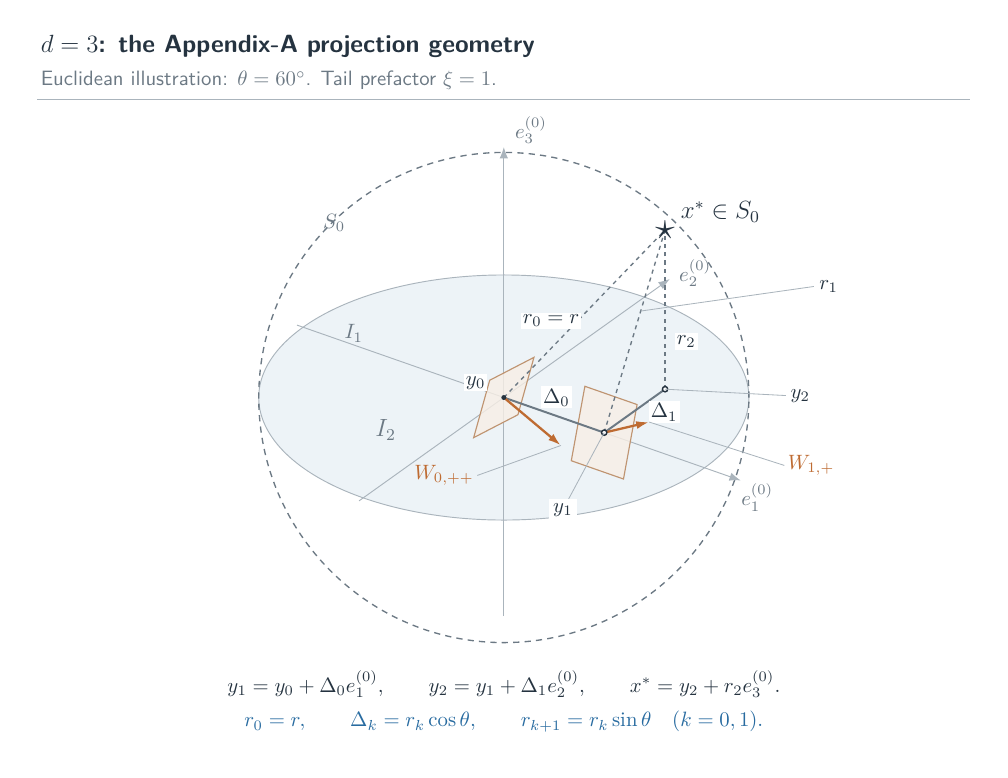}
\caption[Euclidean analogue of the nested geometry.]{Euclidean analogue of the nested geometry of \Cref{sec:geometry}, in dimension $d=3$ with $\theta=60^\circ$. The points $y_k$ are metric projections of $x^*\in S_0$ onto $I_k$. The information subspaces are $I_0=\{0\}$, $I_1=\spanof(e_1^{(0)})$, and $I_2=\spanof(e_1^{(0)},e_2^{(0)})$. The angle $\angle x^*y_ky_{k+1}$ equals $\theta$ for $k=0,1$.
\par\smallskip
Anticipating \Cref{sec:construction}, orange arrows show the normals $W_{0,++}$ and $W_{1,+}$ of the two small hyperplane patches in $\partial L_{0,++}$ and $\partial L_{1,+}$. The normals use \eqref{eq:W-reflected} and \eqref{eq:chosen-a}, except that the \emph{tail prefactor} $\xi$---the overall multiplicative factor in the coefficients $a_m$---is set to $\xi=1$ for visualization, rather than the proof value $\xi=1/4$.}\label{fig:overview}
\end{figure}

\paragraph{Choosing the minimizer $x^*$.} The set $S_{d-1}$ consists of two points. Choose $x^*\in S_{d-1}$ so that
\[
\frac{\log_{y_{d-1}}(x^*)}{r_{d-1}}=e_d^{(d-1)}.
\]
In the hyperboloid coordinates fixed above, $e_d^{(d-1)}=e_d$, so equivalently
\[
x^*=\cosh r_{d-1}\,y_{d-1}+\sinh r_{d-1}\,e_d.
\]
This point will be the minimizer of the hard function we construct. Write
\begin{equation}\tag{def $u_j$}\label{eq:uj}
u_j=\frac{\log_{y_j}(x^*)}{r_j}.
\end{equation}
By \Cref{lem:geometry}, the triangle $(y_j,y_{j+1},x^*)$ is right at $y_{j+1}$ and has angle $\theta$ at $y_j$. Parallel transport preserves inner products, so $P_{y_{j+1}\to y_j}u_{j+1}$ is a unit vector orthogonal to $e_{j+1}^{(j)}$. These vectors lie in the tangent plane of the triangle at $y_j$, while $u_j$ is a unit vector in the same plane making angle $\theta$ with $e_{j+1}^{(j)}$. Hence
\[
u_j=\cos\theta\,e_{j+1}^{(j)}+\sin\theta\,P_{y_{j+1}\to y_j}u_{j+1}.
\]
Iterating this identity, using \eqref{eq:centers-frame}, yields
\begin{equation}\tag{$u_j$ expansion}\label{eq:direction-expansion}
u_j=\cos\theta\sum_{\ell=j+1}^{d-1}(\sin\theta)^{\ell-j-1}e_\ell^{(j)}
+(\sin\theta)^{d-j-1}e_d^{(j)}.
\end{equation}
In the hyperboloid model, $e_\ell^{(j)}=e_\ell$ for $\ell\ge j+1$ by \eqref{eq:unexposed-frame}, so \eqref{eq:direction-expansion} is also the ambient coordinate expansion of $u_j$. The last coefficient has no factor $\cos\theta$ because the recursion terminates with a unit vector. The squared coefficients sum to one, as they must.

\begin{remark}[Connection with Polyak subgradient descent]\label{rem:polyak}
The preceding geometry was originally motivated by the geometric interpretation of the Polyak subgradient method \citep[Appendix~A.3]{CB23}. Suppose at iteration $k$ that a minimizer is known to belong to $B(x_k,s_k)$. After querying $x_k$ and receiving $g_k\in\partial f(x_k)$, the subgradient inequality \eqref{eq:subgradient} further localizes the minimizer to
\[
B(x_k,s_k)\cap
\{x:\inner{g_k}{\log_{x_k}(x)}\le f^*-f(x_k)\}.
\]
The hyperbolic Polyak update chooses $x_{k+1}$ precisely as the center of the smallest ball containing this intersection, with $s_{k+1}$ its radius.

Starting from $x_0=\xref$ with $s_0=r$, run this Polyak method on the hard function of \citet[Appendix~A]{CB23}, whose correction halfspaces are recalled in \Cref{subsec:ansatz,rem:old}. Then the queries $x_k$ of the method exactly correspond to the points $y_k$, while the $s_k$ are precisely the successive uncertainty radii $r_k$:
\[
x_k=y_k,\qquad s_k=r_k.
\]
The recurrence in \Cref{lem:geometry} is therefore both the geometry underlying our lower bound and the worst-case ``ball-shrinking'' recurrence of the Polyak subgradient method.
\end{remark}

\section{Reflected ramps and the lower bound}\label{sec:construction}
We retain the geometry of \Cref{sec:geometry}, with $d\ge2$, $r>0$, and $\theta\in(0,\pi/2)$. We use the minimizer $x^*$, the information submanifolds $I_j$, and the orthonormal frames $e_i^{(j)}$ from \Cref{sec:geometry}. We also continue to use the hyperboloid coordinates fixed in \Cref{sec:geometry}, so that
\[
\xref=y_0=e_0,\qquad e_i^{(0)}=e_i,
\]
and in particular the coordinate identities \eqref{eq:unexposed-frame} and \eqref{eq:spaces-coord} hold throughout this section.

In \Cref{subsec:ansatz} we introduce the hard-function ansatz and its reflected halfspaces, leaving their coefficients undetermined. \Cref{subsec:upperrequirements} imposes upper bounds on these coefficients so that the halfspaces contain the minimizer strictly in their interiors, with a quantitative margin. In \Cref{subsec:completion}, a complementary lower requirement ensures that the reflected ramps can cancel the components of a subgradient normal to an information submanifold. We then choose coefficients satisfying both upper and lower requirements, and deduce the advance-or-stall lemma. Finally, \Cref{subsec:A1proof} proves the lower bound under a mild geodesic-span assumption. The black-box reduction in \Cref{sec:unrestricted} removes that assumption and completes the proof of \Cref{thm:main}.

\subsection{The hard-function ansatz}\label{subsec:ansatz}
Let $a_1,a_2,\ldots>0$ be coefficients to be determined. For $j=0,\ldots,d-2$, define
\begin{equation}\tag{base normal}\label{eq:Wbase}
\begin{aligned}
W_j={}&e_{j+1}^{(j)}-\sum_{\ell=j+2}^{d-1}a_{\ell-j-1}e_\ell^{(j)}\\
&-a_{d-j-1}\sec\theta\,e_d^{(j)}
\quad\in\T_{y_j}\HH^d.
\end{aligned}
\end{equation}
The terminal factor $\sec\theta$ compensates for the missing factor $\cos\theta$ in the last component of $u_j$ in \eqref{eq:direction-expansion}. By \eqref{eq:unexposed-frame}, all frame vectors $e_\ell^{(j)}$ appearing here equal their ambient counterparts $e_\ell$, so \eqref{eq:Wbase} is also the coordinate expansion of $W_j$ in the hyperboloid model.

For a sign tuple
\[
\sigma=(\sigma_{j+2},\ldots,\sigma_d)\in\{-1,+1\}^{d-j-1},
\]
define the ``reflected normal''
\begin{equation}\tag{reflected normals}\label{eq:W-reflected}
\begin{aligned}
W_{j,\sigma}={}&\underbrace{e_{j+1}^{(j)}}_{\text{head}}
-\underbrace{\sum_{\ell=j+2}^{d-1}a_{\ell-j-1}\sigma_\ell e_\ell^{(j)}}_{\text{nonterminal tail}}\\
&-\underbrace{a_{d-j-1}\sec\theta\,\sigma_d e_d^{(j)}}_{\text{terminal tail}}
\quad\in\T_{y_j}\HH^d.
\end{aligned}
\end{equation}
We call the $e_{j+1}^{(j)}$ term the \emph{head}. The remaining, subtracted terms form the \emph{tail}: the sum over $\ell=j+2,\ldots,d-1$ is the \emph{nonterminal tail}, and the term in direction $e_d^{(j)}$ is the \emph{terminal tail}. We then define the corresponding hyperbolic halfspace using \eqref{eq:halfspace}:
\begin{equation}\tag{correction halfspaces}\label{eq:L-reflected}
\begin{aligned}
L_{j,\sigma}
&=\exp_{y_j}\{v\in\T_{y_j}\HH^d:\inner{W_{j,\sigma}}{v}\ge0\}\\
&=\{x\in\HH^d:\innerL{W_{j,\sigma}}{x}\ge0\}.
\end{aligned}
\end{equation}
By \eqref{eq:information-spaces} and \Cref{lem:geometry}, the vectors $e_{j+1}^{(j)},\ldots,e_d^{(j)}$ span $\T_{y_j}H_j$, which is orthogonal to $\T_{y_j}I_j$. Thus $W_{j,\sigma}\in\T_{y_j}H_j$ is orthogonal to $\T_{y_j}I_j$. Since $I_j=\exp_{y_j}(\T_{y_j}I_j)$, the definition \eqref{eq:L-reflected} gives
\begin{equation}\label{eq:boundary-anchor}
I_j\subseteq\partial L_{j,\sigma}.
\end{equation}
Orthonormality shows that $\norm{W_{j,\sigma}}=\norm{W_j}$, independently of $\sigma$.

For a coefficient $\alpha>0$ to be selected, let
\begin{equation}\tag{ramp construction}\label{eq:core}
\begin{aligned}
P(x)&=\max_{0\le j\le d-2}\ \max_{\sigma\in\{-1,+1\}^{d-j-1}}\dist(x,L_{j,\sigma}),\\
f(x)&=\dist(x,x^*)+\alpha P(x).
\end{aligned}
\end{equation}
The inner maximum is over a different sign tuple for each $j$.\footnote{More formally, one may write $\sigma^{(j)}\in\{-1,+1\}^{d-j-1}$ in the stage-$j$ maximum. We suppress the superscript and simply write $\sigma$.} This ansatz for $f$ is in part motivated by the distance-to-point plus distance-to-halfspace construction in \Cref{prop:sharp-support}. By \Cref{subsec:gconvex}, the function $f$ is g-convex and $(1+\alpha)$-Lipschitz. We will ensure that every halfspace contains $x^*$ (\Cref{lem:margin}); then $f^*=f(x^*)=0$, uniquely, because $f(x)\ge\dist(x,x^*)$.

\begin{remark}[The old normals and their coefficients]\label{rem:old}
\citet[Appendix~A]{CB23} use the specific normal
\[
W_j^{\mathrm{old}}
=\frac{e_{j+1}^{(j)}-\cos\theta\,u_j}{\sin^2\theta}.
\]
By \eqref{eq:direction-expansion}, $\inner{W_j^{\mathrm{old}}}{u_j}=0$, and $W_j^{\mathrm{old}}\in\spanof(e_{j+1}^{(j)},u_j)\subseteq\T_{y_j}H_j$. Consequently, the corresponding hyperbolic hyperplane
\[
\partial L_j^{\mathrm{old}}
=\exp_{y_j}\{v\in\T_{y_j}\HH^d:\inner{W_j^{\mathrm{old}}}{v}=0\}
\]
contains both $I_j$ and $x^*=\exp_{y_j}(r_ju_j)$. The corresponding coefficients are
\begin{equation}\tag{old coefficients}\label{eq:old-a}
a_m^{\mathrm{old}}
=\frac{\cos^2\theta(\sin\theta)^m}{\sin^2\theta},
\end{equation}
with terminal coefficient $a_m^{\mathrm{old}}\sec\theta$.

This choice of halfspaces gives the $\Omega(\zeta_r/\eps^2)$ lower bound for the Polyak subgradient method in \citet[Appendix~A]{CB23}. Augmenting with reflections, as in \eqref{eq:W-reflected} and \eqref{eq:core}, gives an $\Omega(\eps^{-2})$ guarantee through the advance-or-stall argument below, but does not recover the curvature factor; see \citet[Chapter~5, Remark~5.1.17]{CriscitielloThesis25}. To recover that factor, we must also change the normals. \Cref{fig:d2} compares the original and modified halfspaces in the two-dimensional Euclidean analogue.
\end{remark}

\begin{figure}[!htbp]
\centering
\includegraphics[width=\linewidth]{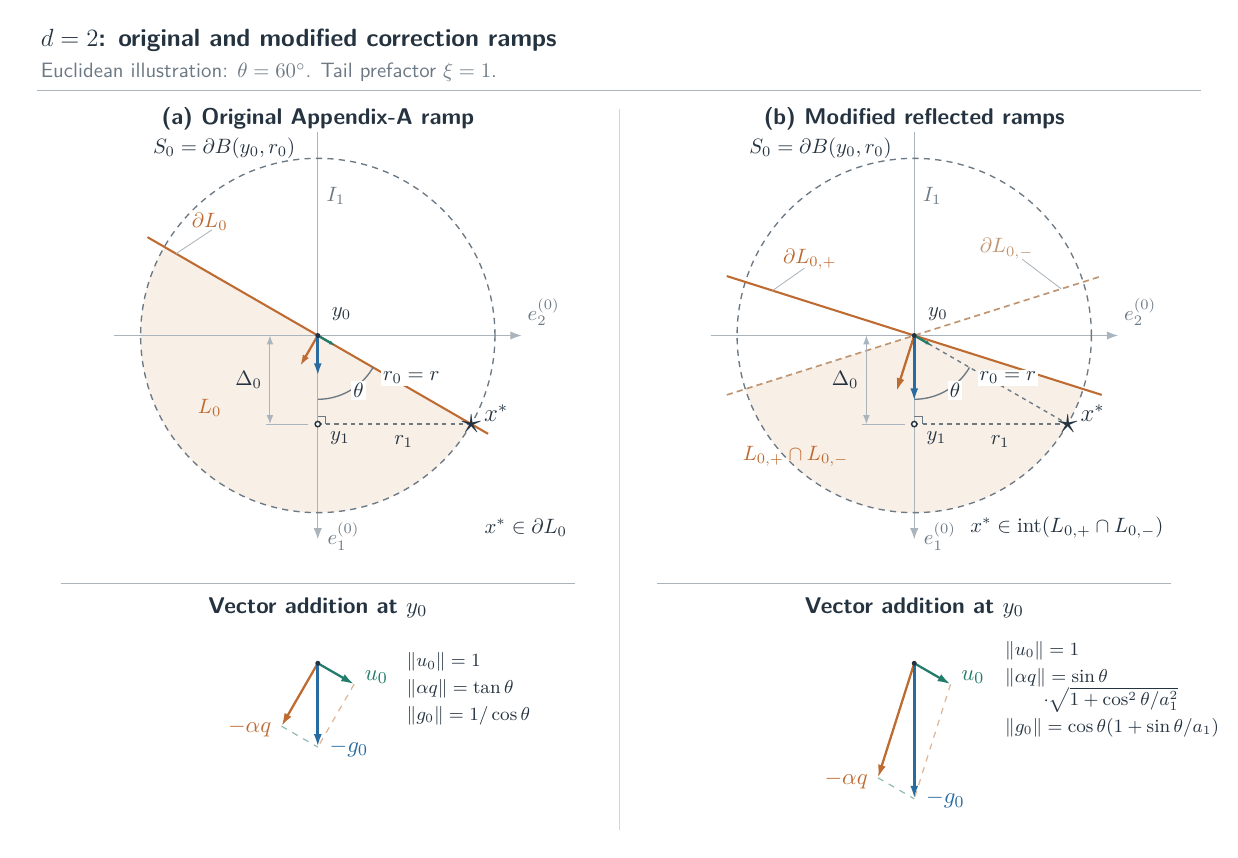}
\caption[Original and modified correction ramps.]{Comparison of the original ramps proposed by Criscitiello and Boumal (left) and our modified reflected ramps (right), in the Euclidean analogue of the construction. The parameters are $d=2$, $\theta=60^\circ$, and tail prefactor $\xi=1$, rather than the proof value $\xi=1/4$. The original halfspace $L_0$ has $x^*$ on its boundary; both modified halfspaces contain $x^*$ in their interiors, illustrating \Cref{lem:margin}.
\par\smallskip
The lower panels show the oracle at $y_0$ for the corresponding Euclidean hard function of the form \eqref{eq:core}. The negative gradient of $\dist(\cdot,x^*)$ is the unit direction $u_0$ toward $x^*$ (green). The ramp correction contributes $-\alpha q$ (orange), where $q$ is a chosen subgradient of the ramp maximum, as in the proof of \Cref{lem:completion}; here it is normal to an active ramp boundary. Their sum $-g_0=u_0-\alpha q$ (blue) is the negative of the returned subgradient and lies in $I_1=\spanof(e_1^{(0)})$.}\label{fig:d2}
\end{figure}

\subsubsection*{The reflection isometries}
The sign tuples in \eqref{eq:W-reflected} have a geometric meaning: they describe halfspaces obtained by reflecting the base halfspace. It is useful to make this explicit.

For each $1\le\ell\le d$, consider the ambient linear map $Q_\ell:\RR^{d+1}\to\RR^{d+1}$ that reverses the $\ell$th spatial coordinate and leaves all other coordinates unchanged:
\[
e_\ell\longmapsto-e_\ell,\qquad e_i\longmapsto e_i\quad(i\ne\ell).
\]
Equivalently, its formula is
\begin{equation}\tag{reflection}\label{eq:reflection}
Q_\ell x=x-2\innerL{x}{e_\ell}e_\ell.
\end{equation}
This map preserves the Lorentz inner product and the positive sheet in \eqref{eq:model}, so it is an isometry of $\HH^d$. More precisely, it is reflection across the codimension-one totally geodesic hyperplane
\[
K_\ell=\HH^d\cap e_\ell^\perp.
\]
By \eqref{eq:spaces-coord}, $I_{\ell-1}\subseteq K_\ell$, so $Q_\ell$ fixes $I_{\ell-1}$ pointwise. In particular, when $\ell\ge j+2$, it fixes $y_j$ and the head $e_{j+1}^{(j)}$, and reverses precisely the $\ell$th tail coordinate of \eqref{eq:W-reflected}.

Let $L_j=L_{j,(+1,\ldots,+1)}$ be the base halfspace associated with $W_j$ in \eqref{eq:Wbase}. Because $Q_\ell$ is an isometry fixing $y_j$, it sends the halfspace with normal $W_{j,\sigma}$ to the halfspace with normal $Q_\ell W_{j,\sigma}$. Thus it changes precisely the sign $\sigma_\ell$. The full family
\[
\{L_{j,\sigma}:\sigma\in\{-1,+1\}^{d-j-1}\}
\]
is the orbit of $L_j$ under the commuting reflections $Q_{j+2},\ldots,Q_d$.

One can also generate these families successively. Start with $L_0$ and reflect it across $K_2$, which contains $I_1$, to obtain both choices of $\sigma_2$. Next introduce $L_1$ and reflect the existing halfspaces across $K_3$, which contains $I_2$. This introduces the $\sigma_3$ sign: the stage-zero family now has four copies, and the stage-one family has two. Continuing in this way adds one independent sign to each earlier family at every step.

\paragraph{Which reflected copies are simultaneously active?}
Fix a point $y\in I_k$ with $k\le d-1$ and an earlier stage $j<k$. Equation \eqref{eq:spaces-coord} gives
\[
\innerL{y}{e_\ell}=0\qquad(\ell>k).
\]
Using \eqref{eq:W-reflected}, we therefore have
\[
\innerL{W_{j,\sigma}}{y}
=\innerL{e_{j+1}}{y}
-\sum_{\ell=j+2}^{k}a_{\ell-j-1}\sigma_\ell\innerL{e_\ell}{y}.
\]
The terminal term is absent because $k\le d-1$. Moreover, $\norm{W_{j,\sigma}}=\norm{W_j}$ is independent of all signs. The distance formula \eqref{eq:signed-distance} consequently shows that
\[
\dist(y,L_{j,\sigma})
=\arcsinh\!\left(\frac{\positive{-\innerL{W_{j,\sigma}}{y}}}{\norm{W_j}}\right)
\]
depends on the \emph{exposed signs} $\sigma_{j+2},\ldots,\sigma_k$, but not on the \emph{future signs} $\sigma_{k+1},\ldots,\sigma_d$.

Suppose one pair $(j,\sigma)$ is active, meaning that $\dist(y,L_{j,\sigma})=P(y)$. Then every sign tuple $\sigma'$ satisfying
\begin{equation}\tag{matching prefix}\label{eq:active-prefix}
\sigma'_\ell=\sigma_\ell\quad(j+2\le\ell\le k),
\qquad
\sigma'_\ell\in\{-1,+1\}\quad(\ell>k)
\end{equation}
is also active. There are $2^{d-k}$ such extensions of the exposed prefix. Their free signs correspond to the directions $e_{k+1},\ldots,e_d$; these form a normal basis of $I_k$ at $y$, as we verify in \Cref{lem:completion}. Thus an active old ramp leaves every future sign free, although different choices of the exposed prefix need not be active. We use this freedom to choose a convex combination of ramp subgradients in \Cref{lem:completion}.

\subsection{Upper requirements on \texorpdfstring{$a_m$}{the coefficients}: containment and interior margin}\label{subsec:upperrequirements}
To obtain $f(x^*)=0$ in \eqref{eq:core}, every halfspace $L_{j,\sigma}$ must contain $x^*$. We will place $x^*$ strictly in their interiors: the resulting separation will be crucial when an old ramp is active. We also need to control $\norm{W_{j,\sigma}}$. The following two pointwise upper bounds suffice:
\begin{equation}\tag{upper requirements}\label{eq:upper-a}
a_m\le\frac14\cos^2\theta,
\qquad
a_m\le\frac14\cos\theta(\sin\theta)^m
\qquad(m\ge1).
\end{equation}
The numerical factor $1/4$ is chosen for convenience. Any fixed factor strictly between $0$ and $1/3$ works.

The two bounds in \eqref{eq:upper-a} play complementary roles. The second controls the size of the tail of $W_{j,\sigma}$, which keeps each reflected normal close to its unit head $e_{j+1}^{(j)}$ and hence keeps $\norm{W_{j,\sigma}}$ bounded. The first controls the interaction of the tail of $W_{j,\sigma}$ with the direction $u_j$: the head contributes $\cos\theta$ to $\inner{W_{j,\sigma}}{u_j}$, and this bound prevents the tail from cancelling more than a constant fraction of that contribution. Consequently, $W_{j,\sigma}$ remains positively aligned with $u_j$, which places $x^*$ strictly inside $L_{j,\sigma}$.

\begin{lemma}[Geometry of a correction halfspace]\label{lem:margin}
Under \eqref{eq:upper-a}, for every $j,\sigma$,
\begin{equation}\label{eq:norm-margin}
1\le\norm{W_{j,\sigma}}<17/16,
\qquad
\inner{W_{j,\sigma}}{u_j}\ge\tfrac14\cos\theta.
\end{equation}
Additionally, $y_k\in L_{j,\sigma}$ for all $j\le k\le d-1$, and $x^*$ belongs to the interior of $L_{j,\sigma}$, with
\begin{equation}\tag{interior margin}\label{eq:intrinsic-margin}
\sinh\dist(x^*,\partial L_{j,\sigma})
\ge\frac15\cos\theta\,\sinh r_j.
\end{equation}
\end{lemma}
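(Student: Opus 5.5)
The plan is to work entirely in the hyperboloid coordinates of \Cref{sec:geometry}. There, by \eqref{eq:unexposed-frame}, $W_{j,\sigma}$ has the explicit orthonormal expansion \eqref{eq:W-reflected}, and $u_j$ has the expansion \eqref{eq:direction-expansion}. Each of the four claims then reduces to a geometric-series estimate using one of the two bounds in \eqref{eq:upper-a}. Throughout I write $n=d-j-1$, $s=\sin\theta$, $c=\cos\theta$, and $m=\ell-j-1$ for the tail index.

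\emph{Norm.} By orthonormality,
\[
\norm{W_{j,\sigma}}^2=1+\sum_{m=1}^{n-1}a_m^2+a_n^2\sec^2\theta .
\]
This immediately gives the lower bound $1$. For the upper bound I use the second requirement $a_m\le\frac14 c s^m$:
\[
\norm{W_{j,\sigma}}^2\le1+\tfrac1{16}\Bigl(c^2\sum_{m=1}^{n-1}s^{2m}+s^{2n}\Bigr).
\]
The bracket is a telescoping-type sum, $c^2\sum_{m=1}^{n-1}s^{2m}+s^{2n}=s^2<1$. Hence $\norm{W_{j,\sigma}}^2<17/16$, and so $\norm{W_{j,\sigma}}<\sqrt{17/16}<17/16$.

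\emph{Alignment.} Pairing \eqref{eq:W-reflected} with \eqref{eq:direction-expansion} gives
\[
\inner{W_{j,\sigma}}{u_j}=c-c\sum_{m=1}^{n-1}\sigma a_m s^{m}-\sigma_d a_n s^{n-1}.
\]
Here the $\sec\theta$ in the terminal tail cancels the factor $c$ missing from the last coefficient of $u_j$. Using the first requirement $a_m\le\frac14c^2$, the tail is at most
\[
\tfrac14 c\Bigl(c^2\sum_{m=1}^{n-1}s^{m}+s^{n-1}\Bigr)\le\tfrac14c\,(1+s)\le\tfrac12 c,
\]
because $c^2\sum_{m\ge1}s^m\le c^2s/(1-s)=s(1+s)$. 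This yields $\inner{W_{j,\sigma}}{u_j}\ge\frac12c\ge\frac14c$.

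\emph{Margin.} I write
\[
x^*=\exp_{y_j}(r_ju_j)=\cosh r_j\,y_j+\sinh r_j\,u_j .
\]
Since $W_{j,\sigma}$ is tangent at $y_j$, we have $\innerL{W_{j,\sigma}}{y_j}=0$. Then \eqref{eq:signed-distance} gives
\[
\sinh s_{L_{j,\sigma}}(x^*)=\sinh r_j\,\frac{\inner{W_{j,\sigma}}{u_j}}{\norm{W_{j,\sigma}}}\ge\frac{4}{17}\,c\sinh r_j .
\]
This is positive, which places $x^*$ in the interior, and it is at least $\frac15 c\sinh r_j$, which is the interior margin.

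\emph{Containment of $y_k$.} This is the step I expect to require the most care, since it needs the explicit coordinates of $y_k$. I unroll $y_{i+1}=\cosh\Delta_i\,y_i+\sinh\Delta_i\,e_{i+1}$. For $1\le\ell\le k$, the $e_\ell$-coordinate of $y_k$ is then
\[
\gamma_\ell=\sinh\Delta_{\ell-1}\prod_{i=\ell}^{k-1}\cosh\Delta_i>0 .
\]
From \eqref{eq:recurrence} I derive two identities. First, $\cosh\Delta_i=\cosh r_i/\cosh r_{i+1}$, so the product telescopes to $\cosh r_\ell/\cosh r_k$. Second, $\sinh\Delta_i=c\sinh r_i/\cosh r_{i+1}$. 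Together these give
\[
\frac{\gamma_\ell}{\gamma_{j+1}}=\frac{\sinh r_{\ell-1}}{\sinh r_j}=s^{\ell-1-j}.
\]
Since $k\le d-1$, the terminal tail does not contribute, and
\[
\innerL{W_{j,\sigma}}{y_k}=\gamma_{j+1}\Bigl(1-\sum_{m=1}^{k-j-1}\sigma a_m s^{m}\Bigr).
\]
The first requirement bounds the sum by $\frac14c^2\,s/(1-s)\le\frac12$. Hence $\innerL{W_{j,\sigma}}{y_k}\ge\frac12\gamma_{j+1}\ge0$, which means $y_k\in L_{j,\sigma}$. For $k=j$ this is simply \eqref{eq:boundary-anchor}.
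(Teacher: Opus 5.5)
Your proposal is correct and takes essentially the paper's approach: the same orthonormal expansions and geometric-series bounds from the two upper requirements, with your unrolled coordinates of $y_k$ and your direct signed-distance computation giving the same formulas the paper obtains by pairing both expressions of $x^*$ with $e_\ell$ and by the right-triangle sine rule. One slip to fix: the terminal term of $\inner{W_{j,\sigma}}{u_j}$ is $\sigma_d\,a_n\sec\theta\,(\sin\theta)^{n}$, not $\sigma_d\,a_n(\sin\theta)^{n-1}$ (the $\sec\theta$ compounds with the missing $\cos\theta$ rather than cancelling it), but it is at most $\frac14\cos\theta\,(\sin\theta)^n\le\frac14\cos^3\theta\sum_{m\ge n}(\sin\theta)^m$, so the whole tail is still at most $\frac14\cos\theta\,\sin\theta\,(1+\sin\theta)\le\frac12\cos\theta$ and your conclusion stands.
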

\Needspace{5\baselineskip}
\begin{proof}
We verify each claim in turn.

\paragraph{Bounding the normal length.}
The frame vectors $e_{j+1}^{(j)},\ldots,e_d^{(j)}$ from \eqref{eq:centers-frame} are orthonormal in $\T_{y_j}\HH^d$. Hence \eqref{eq:W-reflected} gives
\begin{align*}
1\le\norm{W_{j,\sigma}}^2
&=1+\sum_{m=1}^{d-j-2}a_m^2+a_{d-j-1}^2\sec^2\theta\\
&\le1+\frac{\cos^2\theta}{16}\sum_{m\ge1}(\sin\theta)^{2m}+\frac1{16}\\
&=1+\frac{\sin^2\theta}{16}+\frac1{16}
\le\frac98<\left(\frac{17}{16}\right)^2.
\end{align*}
This uses the second inequality in \eqref{eq:upper-a}.

\paragraph{Positive alignment and interior margin.}
We now compute $\inner{W_{j,\sigma}}{u_j}$ using the expansion \eqref{eq:direction-expansion}. The head $e_{j+1}^{(j)}$ in \eqref{eq:W-reflected} contributes $\cos\theta$. The contribution of the nonterminal tail has absolute value at most
\begin{align*}
\left|\cos\theta\sum_{m=1}^{d-j-2}a_m\sigma_{j+m+1}(\sin\theta)^m\right|
&\le\cos\theta\sum_{m\ge1}a_m(\sin\theta)^m\\
&\le\frac{\cos^3\theta}{4}\frac{\sin\theta}{1-\sin\theta}\\
&=\frac{\cos\theta}{4}\sin\theta(1+\sin\theta)
\le\frac12\cos\theta,
\end{align*}
where the second inequality uses the first upper bound in \eqref{eq:upper-a}. The terminal tail contributes a term of absolute value
\[
a_{d-j-1}\sec\theta(\sin\theta)^{d-j-1}\le\tfrac14\cos\theta,
\]
again by the first inequality in \eqref{eq:upper-a}. Combining the head, nonterminal tail, and terminal tail gives
\[
\inner{W_{j,\sigma}}{u_j}
\ge\cos\theta-\tfrac12\cos\theta-\tfrac14\cos\theta
=\tfrac14\cos\theta,
\]
proving the second inequality in \eqref{eq:norm-margin}.

Let $\nu_{j,\sigma}=W_{j,\sigma}/\norm{W_{j,\sigma}}$. We therefore have
\[
\inner{\nu_{j,\sigma}}{u_j}
\ge\frac4{17}\cos\theta\ge\frac15\cos\theta>0.
\]
By \eqref{eq:uj}, $x^*=\exp_{y_j}(r_ju_j)$. The strict inequality above therefore places $x^*$ in $\interior L_{j,\sigma}$ by \eqref{eq:L-reflected}. Let $m=\dist(x^*,\partial L_{j,\sigma})$. The metric projection of $x^*$ onto $\partial L_{j,\sigma}$ gives a right triangle with hypotenuse $r_j$. The sine rule \eqref{eq:right-triangle} gives
\[
\sinh m=\sinh r_j\inner{\nu_{j,\sigma}}{u_j},
\]
which proves \eqref{eq:intrinsic-margin}.

\paragraph{Containment of the centers.}
It remains to verify $y_k\in L_{j,\sigma}$ for $j\le k\le d-1$. When $k=j$, this follows from $y_j\in I_j\subseteq\partial L_{j,\sigma}$, as in \eqref{eq:boundary-anchor}. For $k>j$, recall from \eqref{eq:exp} and \eqref{eq:uj} that
\[
x^*=\cosh r_j\,y_j+\sinh r_j\,u_j
=\cosh r_k\,y_k+\sinh r_k\,u_k.
\]
For $j<\ell\le k\le d-1$, take the Lorentz inner product of these expressions with $e_\ell$. Since $\innerL{y_j}{e_\ell}=0$ for $\ell>j$, while $\innerL{u_k}{e_\ell}=0$ for $\ell\le k$, we obtain
\[
\sinh r_j\,\innerL{u_j}{e_\ell}
=\cosh r_k\,\innerL{y_k}{e_\ell}.
\]
Using \eqref{eq:direction-expansion},
\[
\innerL{u_j}{e_\ell}=\cos\theta(\sin\theta)^{\ell-j-1},
\]
and therefore
\[
\innerL{y_k}{e_\ell}
=\frac{\cos\theta(\sin\theta)^{\ell-j-1}\sinh r_j}{\cosh r_k}.
\]
Combining this identity with \eqref{eq:W-reflected}, we obtain
\begin{equation}\label{eq:center-contained}
\innerL{W_{j,\sigma}}{y_k}
=\frac{\cos\theta\sinh r_j}{\cosh r_k}
\left(1-\sum_{m=1}^{k-j-1}a_m\sigma_{j+m+1}(\sin\theta)^m\right).
\end{equation}
There is no terminal term because $y_k$ has zero $e_d$ coordinate by \eqref{eq:spaces-coord}. Using the first inequality in \eqref{eq:upper-a}, the sum in parentheses has absolute value at most
\[
\sum_{m\ge1}a_m(\sin\theta)^m
\le\frac{\cos^2\theta}{4}\frac{\sin\theta}{1-\sin\theta}
=\tfrac14\sin\theta(1+\sin\theta)\le\tfrac12.
\]
Thus \eqref{eq:center-contained} is positive, and \eqref{eq:L-reflected} gives $y_k\in L_{j,\sigma}$.
\end{proof}

The interior margin \eqref{eq:intrinsic-margin} distinguishes these ramps from the original corrections of \citet{CB23}. An original correction halfspace has $x^*$ on its boundary, as explained in \Cref{rem:old}, and therefore gives no interior margin separating a query from $x^*$. The modified halfspaces provide the definite separation in \eqref{eq:intrinsic-margin}. This is crucial for the next lemma.

\begin{lemma}[Old-ramp geometry]\label{lem:oldramp}
Assume \eqref{eq:upper-a}. Let $0\le j\le k\le d-2$, let $y\in I_k$, and suppose $y\notin\interior L_{j,\sigma}$. With
\[
D=\dist(y,x^*),\qquad \delta=\dist(y,L_{j,\sigma}),
\]
we have
\begin{equation}\tag{old-ramp estimate}\label{eq:oldramp}
\frac{\sinh r_j\cosh\delta}{\sinh D}
\le\frac{6}{(\sin\theta)^{k-j}+\cos\theta}.
\end{equation}
The statement includes a boundary point, with $\delta=0$.
\end{lemma}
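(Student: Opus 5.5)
The plan is to prove two separate upper bounds on the ratio $R=\sinh r_j\cosh\delta/\sinh D$ and then combine them. The first is $R\le 5/\cos\theta$, which comes from the interior margin. The second is $R\le(\sin\theta)^{-(k-j)}$, which comes from the distance of $x^*$ to $I_k$. Throughout, write $L=L_{j,\sigma}$ and $m=\dist(x^*,\partial L)$.

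\textbf{Margin bound.} Use the signed distance $s_L$ of \eqref{eq:signed-distance}, which is positive on the $L$ side.
\begin{itemize}
\item $s_L$ is $1$-Lipschitz, since it is a signed distance to a hyperplane.
\item \Cref{lem:margin} places $x^*$ in $\interior L$, so $s_L(x^*)=m$.
\item Since $y\notin\interior L$, we have $s_L(y)=-\delta$. This includes the boundary case $\delta=0$.
\end{itemize}
Hence $D\ge s_L(x^*)-s_L(y)=m+\delta$. Then
\[
\sinh D\ge\sinh(m+\delta)\ge\sinh m\cosh\delta .
\]
Combining this with \eqref{eq:intrinsic-margin} gives $\sinh D\ge\tfrac15\cos\theta\,\sinh r_j\cosh\delta$, that is, $R\le5/\cos\theta$.

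\textbf{Projection bound.} By \Cref{lem:geometry}, the metric projection of $x^*$ onto $I_k$ is $y_k$, and $\dist(x^*,I_k)=r_k$. Put $\rho=\dist(y,y_k)$. Applying \eqref{eq:general-projection-triangle} with $\mathcal N=I_k$ gives
\[
\cosh D=\cosh r_k\cosh\rho .
\]
Therefore
\[
\sinh^2D=\cosh^2r_k\cosh^2\rho-1\ge\sinh^2r_k\cosh^2\rho .
\]
Next, \Cref{lem:margin} gives $y_k\in L$ because $j\le k$. Hence $\delta\le\dist(y,y_k)=\rho$ and $\cosh\delta\le\cosh\rho$. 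With \eqref{eq:radii}, $\sinh r_k=(\sin\theta)^{k-j}\sinh r_j$, this yields
\[
\sinh D\ge(\sin\theta)^{k-j}\sinh r_j\cosh\delta,
\qquad\text{that is,}\qquad R\le(\sin\theta)^{-(k-j)} .
\]

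\textbf{Combining.} Write $c=\cos\theta$ and $s=(\sin\theta)^{k-j}$. The two bounds give
\[
1/R\ge\max(c/5,\,s).
\]
It remains to show $\max(c/5,s)\ge(c+s)/6$.
\begin{itemize}
\item If $s\ge c/5$, then $5s\ge c$, so $6s\ge c+s$.
\item If $c/5>s$, then $c>5s$, so $6c/5\ge c+s$.
\end{itemize}
In both cases $\max(c/5,s)\ge(c+s)/6$, which yields \eqref{eq:oldramp}.

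\textbf{Main obstacle.} The only delicate step is choosing what to compare $\cosh\delta$ against. In the margin bound, the whole factor $\cosh\delta$ is absorbed by the addition formula for $\sinh(m+\delta)$. In the projection bound, it is absorbed by the containment $y_k\in L$ from \Cref{lem:margin}, through $\delta\le\rho$. Once these two pairings are in place, everything else is routine.
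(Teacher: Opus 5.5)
Your proposal is correct and follows essentially the same approach as the paper. Both arguments use the right-triangle bound via $y_k\in L_{j,\sigma}$ and the signed-distance bound via the interior margin \eqref{eq:intrinsic-margin}, then combine them using $\max(c/5,s)\ge(c+s)/6$. You invoke \eqref{eq:general-projection-triangle} where the paper uses the right-triangle identity directly, but this is the same fact.
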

\Needspace{5\baselineskip}
\begin{proof}
This follows from two estimates, both grounded in geometry.

\paragraph{Right-triangle estimate.}
First, $y_k\in L_{j,\sigma}$ by \Cref{lem:margin}, so $\dist(y,y_k)\ge\delta$. Since $y_k$ is the metric projection of $x^*$ onto $I_k$ by \Cref{lem:geometry}, the right-triangle identity \eqref{eq:right-triangle} gives
\[
\cosh D=\cosh r_k\,\cosh\dist(y,y_k).
\]
Using $\cosh^2(\cdot)=1+\sinh^2(\cdot)$, we obtain
\[
\sinh^2D
=\sinh^2r_k\cosh^2\dist(y,y_k)+\sinh^2\dist(y,y_k).
\]
Consequently $\sinh D\ge\sinh r_k\cosh\delta$. Using \eqref{eq:radii}, we obtain
\begin{equation}\label{eq:oldramp-radius}
\frac{\sinh r_j\cosh\delta}{\sinh D}
\le\frac{\sinh r_j}{\sinh r_k}
=\frac1{(\sin\theta)^{k-j}}.
\end{equation}

\paragraph{Hyperplane-separation estimate.}
Second, let $m=\dist(x^*,\partial L_{j,\sigma})$. The signed distance to a hyperplane is $1$-Lipschitz. Its values at $x^*$ and $y$ are $m$ and $-\delta$, respectively, so $D\ge m+\delta$. Using the $\sinh$ addition formula and \eqref{eq:intrinsic-margin},
\[
\sinh D\ge\sinh(m+\delta)\ge\sinh m\cosh\delta
\ge\tfrac15\cos\theta\sinh r_j\cosh\delta.
\]
Thus the same ratio satisfies
\begin{equation}\label{eq:oldramp-margin}
\frac{\sinh r_j\cosh\delta}{\sinh D}\le\frac5{\cos\theta}.
\end{equation}

Combining \eqref{eq:oldramp-radius} and \eqref{eq:oldramp-margin}, and using $\min\{a^{-1},5b^{-1}\}\le6/(a+b)$ for $a,b>0$, proves \eqref{eq:oldramp}.\footnote{This final interpolation is optional: one may instead retain the minimum of the two geometric bounds throughout the subsequent argument, and choose the coefficients accordingly. We use the displayed harmonic interpolation only to avoid carrying repeated minimum expressions through later formulas.}
\end{proof}

The second term in the denominator of \eqref{eq:oldramp} comes from \eqref{eq:intrinsic-margin}. Even when the active ramp $j$ is very old ($j\ll k$), the estimate
\[
\sinh\dist(y,x^*)\ge\tfrac15\sinh r_j\cosh\delta\cos\theta
\]
keeps a potential query $y\in I_k$ well-separated from $x^*$ on a scale determined by the ramp's radius $r_j$, rather than only the current radius $r_k$. This separation is what allows the stall case of \Cref{lem:stall-advance} to work regardless of the active ramp's age $k-j$.

\subsection{Choice of coefficients and advance-or-stall}\label{subsec:completion}
The next lemma is the main step in constructing the oracle. Its lower requirement on the coefficients allows the reflected ramps to cancel the ``hidden'' part of the gradient of $\dist(\cdot,x^*)$: the component normal to the information submanifold in which we want the oracle answer to lie. We then choose coefficients satisfying both upper and lower requirements and deduce the advance-or-stall lemma.

\begin{lemma}[Cancelling the hidden components]\label{lem:completion}
Let $a_m>0$ and $\alpha>0$, and define $P$ and $f$ by \eqref{eq:core}. Assume \eqref{eq:upper-a} and also
\begin{equation}\tag{lower requirements}\label{eq:lower-a}
a_m\ge\frac{51}{8\alpha}
\frac{\cos\theta(\sin\theta)^m}{(\sin\theta)^m+\cos\theta}
\qquad(m\ge1).
\end{equation}
Let $y\in I_k$ with $0\le k\le d-2$. Suppose $(j,\sigma)$ is active in $P(y)$, meaning $\dist(y,L_{j,\sigma})=P(y)$, with $j\le k$, and that $y\notin\interior L_{j,\sigma}$. Then there is a subgradient $g\in\partial f(y)$ tangent to $I_{\max\{k,j+1\}}$.
\end{lemma}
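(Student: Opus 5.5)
The plan is to compute everything in hyperboloid coordinates. Set $K=\max\{k,j+1\}$. Since $y\in I_k\subseteq I_K$ and $I_K=\HH^d\cap\spanof(e_0,\ldots,e_K)$, the ambient vectors $e_\ell$ with $\ell>K$ satisfy $\innerL{e_\ell}{y}=0$. They therefore form an orthonormal basis of the normal space of $I_K$ at $y$ inside $\T_y\HH^d$, and a vector in $\T_y\HH^d$ is tangent to $I_K$ iff its Lorentz inner product with every such $e_\ell$ vanishes. I will build $g=g_0+\alpha q$ with $g_0=\nabla\dist(\cdot,x^*)(y)$ and $q\in\partial P(y)$ a convex combination of subgradients of the active reflected ramps. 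Then I choose the weights so that every $e_\ell$-component with $\ell>K$ cancels. Note $y\neq x^*$, since $x^*\in\interior L_{j,\sigma}$ by \Cref{lem:margin} while $y\notin\interior L_{j,\sigma}$.

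\textbf{Step 1: hidden components of $g_0$.} By \eqref{eq:log}, $g_0=-\log_y(x^*)/D$ with $D=\dist(y,x^*)$, so $\innerL{g_0}{e_\ell}=-\innerL{x^*}{e_\ell}/\sinh D$ for $\ell>K$. Using $x^*=\cosh r_j\,y_j+\sinh r_j\,u_j$ and \eqref{eq:direction-expansion}, this gives $\innerL{x^*}{e_\ell}=\sinh r_j\cos\theta(\sin\theta)^{m}$ for $\ell<d$ and $\sinh r_j(\sin\theta)^{m}$ for $\ell=d$, where $m=\ell-j-1$.

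\textbf{Step 2: ramp subgradients.} Put $\delta=\dist(y,L_{j,\sigma})$. If $\delta>0$, use \eqref{eq:ramp-gradient}. If $\delta=0$, use the boundary subgradient $-\nu$, which is the same formula with $\cosh\delta=1$. In both cases the ramp subgradient at $y$ has $e_\ell$-component $a_m\sigma_\ell/(\norm{W_j}\cosh\delta)$, with an extra factor $\sec\theta$ when $\ell=d$. By \eqref{eq:active-prefix}, every tuple $\sigma'$ agreeing with $\sigma$ on the exposed prefix ($\ell\le k$) is also active. Moreover, its subgradient differs from that of $\sigma$ only in the coordinates $e_\ell$ with $\ell>k$: the $y$-term and the exposed part of $\nu$ do not change. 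I will average over $\sigma'$ drawn from a product distribution with independent free signs of prescribed means $\mu_\ell\in[-1,1]$. By \eqref{eq:active-combination} this gives $q\in\partial P(y)$ whose $e_\ell$-component is $a_m\mu_\ell/(\norm{W_j}\cosh\delta)$, times $\sec\theta$ if $\ell=d$. For $\ell=k+1$, which is normal to $I_K$ only when $K=k$, I set $\mu_{k+1}$ freely as needed.

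\textbf{Step 3: solve for the means.} Cancelling the $e_\ell$-component for each $\ell>K$ requires
\[
\mu_\ell=\frac{\norm{W_j}\cosh\delta\,\sinh r_j\cos\theta(\sin\theta)^m}{\alpha a_m\sinh D}.
\]
For $\ell=d$ this is the same expression: the $\sec\theta$ on the ramp side cancels the missing $\cos\theta$ on the $g_0$ side. The hypotheses of \Cref{lem:oldramp} hold ($j\le k\le d-2$, $y\in I_k$, $y\notin\interior L_{j,\sigma}$). Combining it with $\norm{W_j}<17/16$ from \Cref{lem:margin} gives
\[
|\mu_\ell|\le\frac{51}{8\alpha a_m}\,\frac{\cos\theta(\sin\theta)^m}{(\sin\theta)^{k-j}+\cos\theta}.
\]
Since $\ell>K\ge k$, we have $m\ge k-j$, hence $(\sin\theta)^{k-j}\ge(\sin\theta)^m$. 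The bound is therefore at most the same expression with $(\sin\theta)^m$ in the denominator, which is $\le1$ by \eqref{eq:lower-a}.

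The main obstacle is exactly this feasibility bound $|\mu_\ell|\le1$ for very old active ramps ($j\ll k$). That is where the interior-margin half of \Cref{lem:oldramp} is essential, and where the matching of the index $m$ against the exponent $k-j$ must be checked carefully. In the case $j=k$, we have $\delta=0$ and $y\in I_k\subseteq\partial L_{k,\sigma}$, so the only uncontrolled direction is $e_{k+1}$, which is tangent to $I_{k+1}=I_K$, as allowed.
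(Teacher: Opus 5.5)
Your proposal is correct and follows essentially the same route as the paper. You compute the hidden $e_\ell$-components of the gradient of $\dist(\cdot,x^*)$ via $x^*=\cosh r_j\,y_j+\sinh r_j\,u_j$, bound them with \Cref{lem:oldramp} and $\norm{W_j}<17/16$ (using $m\ge k-j$), and cancel them with a convex combination of active reflected-ramp subgradients; your product distribution over the free signs with means $\mu_\ell$ is just an explicit form of the paper's ``a box is the convex hull of its vertices'' step.
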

\begin{proof}
Write $\sigma=(\sigma_{j+2},\ldots,\sigma_d)$. We use $\sigma'\in\{-1,+1\}^{d-j-1}$ for sign tuples agreeing with $\sigma$ for $j+2\le\ell\le k$, with the remaining signs free. When $j<k$, every such pair $(j,\sigma')$ is active at $y$ by \eqref{eq:active-prefix}, that is,
\[
\dist(y,L_{j,\sigma'})=P(y)=\dist(y,L_{j,\sigma}).
\] When $j=k$, every halfspace in the ``fresh'' family $(k,\sigma')$ has $y\in I_k$ on its boundary by \eqref{eq:boundary-anchor}. Since the chosen pair is active, $P(y)=0$, so all these fresh copies are active as well.

Put
\[
D=\dist(y,x^*),\qquad
\delta=\dist(y,L_{j,\sigma}),\qquad
u=\frac{\log_y(x^*)}{D}.
\]
Here $D\ge r_k>0$ by \Cref{lem:geometry}. We first identify the directions to be cancelled. By \eqref{eq:spaces-coord}, the tangent space of $I_{\max\{k,j+1\}}$ at $y$ is the intersection of $\T_y\HH^d$ with $\spanof(e_0,\ldots,e_{\max\{k,j+1\}})$. Consequently, the vectors
\begin{equation}\tag{hidden directions}\label{eq:free-directions}
e_\ell,\qquad \ell>\max\{k,j+1\},
\end{equation}
are tangent at $y$ and form an orthonormal basis of its normal space. At $y$, the gradient of the term $\dist(\cdot,x^*)$ in \eqref{eq:core} is $-u$. We shall cancel the components of $-u$ in these directions, so that the corrected subgradient is tangent to $I_{\max\{k,j+1\}}$.

\paragraph{Components to be cancelled.}
First compute how large these components are. Formula \eqref{eq:log} gives
\[
u=\frac{x^*-\cosh D\,y}{\sinh D}.
\]
For $\ell>\max\{k,j+1\}$, put $m=\ell-j-1$; then $m\ge1$ and $m\ge k-j$.

Since $y\in I_k$ and $y_j\in I_j\subseteq I_k$, both $y$ and $y_j$ are Lorentz-orthogonal to $e_\ell$, by \eqref{eq:spaces-coord}. Consequently, using $x^*=\cosh r_j\,y_j+\sinh r_j\,u_j$, we obtain
\[
\innerL{u}{e_\ell}
=\frac{\innerL{x^*}{e_\ell}}{\sinh D}
=\frac{\sinh r_j}{\sinh D}\innerL{u_j}{e_\ell}.
\]
Thus the hidden components of $u$ at $y$ are obtained from those of $u_j$ at $y_j$ by a common scaling factor. Using \eqref{eq:direction-expansion}, we conclude that
\begin{equation}\label{eq:base-hidden-components}
\begin{aligned}
\innerL{u}{e_\ell}
&=\frac{\cos\theta(\sin\theta)^m\sinh r_j}{\sinh D}
&& \bigl(\max\{k,j+1\}<\ell<d\bigr),\\
\innerL{u}{e_d}
&=\frac{(\sin\theta)^{d-j-1}\sinh r_j}{\sinh D}.
\end{aligned}
\end{equation}
\Cref{lem:oldramp} gives, for $\max\{k,j+1\}<\ell<d$,
\begin{equation}\label{eq:required-component}
\begin{aligned}
\left|\innerL{u}{e_\ell}\right|\cosh\delta
&\le\frac{6\cos\theta(\sin\theta)^m}{(\sin\theta)^{k-j}+\cos\theta}\\
&\le\frac{6\cos\theta(\sin\theta)^m}{(\sin\theta)^m+\cos\theta}
\le\frac{\alpha a_m}{\norm{W_j}}.
\end{aligned}
\end{equation}
The second inequality uses $m\ge k-j$. The last uses \eqref{eq:lower-a} and $\norm{W_j}<17/16$ from \Cref{lem:margin}, since $(51/8)(16/17)=6$. For $\ell=d$, the same computation gives
\begin{equation}\label{eq:required-terminal}
\left|\innerL{u}{e_d}\right|\cosh\delta
\le\frac{6(\sin\theta)^{d-j-1}}{(\sin\theta)^{d-j-1}+\cos\theta}
\le\frac{\alpha a_{d-j-1}\sec\theta}{\norm{W_j}}.
\end{equation}
The extra terminal factor $\sec\theta$ in \eqref{eq:W-reflected} thus compensates exactly for the larger terminal component in \eqref{eq:base-hidden-components}.

\paragraph{Components supplied by the ramps.}
Next identify the $e_\ell$ components supplied by the reflected ramps $\dist(\cdot,L_{j,\sigma'})$. For each copy $\sigma'$ above, let $v_{j,\sigma'}(y)$ be the gradient of this distance function at $y$ if $\delta>0$, and the outward unit normal to $L_{j,\sigma'}$ if $\delta=0$. In either case it is a valid subgradient at $y$, and \eqref{eq:ramp-gradient} gives
\[
v_{j,\sigma'}(y)
=-\frac{W_{j,\sigma'}/\norm{W_j}-\sinh\delta\,y}{\cosh\delta}.
\]
Because $y\in I_k$, we have $\innerL{y}{e_\ell}=0$ for $\ell>k$. Equation \eqref{eq:W-reflected} therefore gives
\begin{equation}\label{eq:available-component}
\begin{aligned}
\innerL{v_{j,\sigma'}(y)}{e_\ell}
&=\frac{a_m\sigma'_\ell}{\norm{W_j}\cosh\delta}
&& \bigl(\max\{k,j+1\}<\ell<d\bigr),\\
\innerL{v_{j,\sigma'}(y)}{e_d}
&=\frac{a_{d-j-1}\sec\theta\,\sigma'_d}{\norm{W_j}\cosh\delta}.
\end{aligned}
\end{equation}
Every independent choice of the signs displayed here occurs among the active copies.

\paragraph{Choosing the convex combination.}
Let $\Pi^\perp$ be orthogonal projection from $\T_y\HH^d$ onto the normal space of $I_{\max\{k,j+1\}}$ at $y$:
\[
\Pi^\perp w=\sum_{\ell>\max\{k,j+1\}}\innerL{w}{e_\ell}e_\ell.
\]
By \eqref{eq:available-component}, the vectors $\Pi^\perp(\alpha v_{j,\sigma'}(y))$ form the vertices of a rectangular box centered at zero. Equations \eqref{eq:required-component} and \eqref{eq:required-terminal} put $\Pi^\perp u$ inside that box. Since a box is the convex hull of its vertices, there are weights $\omega_{\sigma'}\ge0$, summing to one, such that
\begin{equation}\tag{component cancellation}\label{eq:component-cancellation}
q=\sum_{\sigma'}\omega_{\sigma'}v_{j,\sigma'}(y),
\qquad
\alpha\innerL{q}{e_\ell}=\innerL{u}{e_\ell}
\quad\bigl(\ell>\max\{k,j+1\}\bigr).
\end{equation}
As already justified, every ramp used in this combination is active at $y$. Thus \eqref{eq:active-combination} gives $q\in\partial P(y)$, and therefore
\[
g=-u+\alpha q\in\partial f(y).
\]
By \eqref{eq:component-cancellation}, $g$ is orthogonal to every direction normal to $I_{\max\{k,j+1\}}$ at $y$. Hence $g\in\T_y I_{\max\{k,j+1\}}$, as claimed.
\end{proof}

\subsubsection*{Choosing the coefficients and advance-or-stall}
We now choose coefficients satisfying both \eqref{eq:upper-a} and \eqref{eq:lower-a}. Introduce a \emph{tail prefactor} $\xi>0$ and set
\begin{equation}\tag{ramp coefficients}\label{eq:chosen-a}
a_m=\xi\,\frac{\cos^2\theta(\sin\theta)^m}{(\sin\theta)^m+\cos\theta},
\qquad \xi=\frac14,
\qquad \alpha=32\sec\theta.
\end{equation}
This choice completely specifies the hard function $f$ in \eqref{eq:core}. For this coefficient family, the two upper requirements follow whenever $\xi\le1/4$, by retaining respectively $(\sin\theta)^m$ and $\cos\theta$ in the denominator. With $\alpha=32\sec\theta$, the lower requirement is equivalent to $\xi\ge51/256$. Thus the choice
\[
\xi=\frac14=\frac{64}{256}\ge\frac{51}{256}
\]
satisfies all requirements.

\begin{lemma}[Advance-or-stall]\label{lem:stall-advance}
Use the coefficients \eqref{eq:chosen-a}, and define $P$ and $f$ by \eqref{eq:core}. For every $y\in I_k$, $0\le k\le d-2$, the function $f$ has a subgradient $g\in\partial f(y)$ such that
\begin{equation}\tag{advance-or-stall}\label{eq:stall-advance}
\begin{aligned}
&\text{if }P(y)=0,\quad\text{then }g\in\T_yI_{k+1},\\
&\text{if }P(y)>0,\quad\text{then }g\in\T_yI_k.
\end{aligned}
\end{equation}
\end{lemma}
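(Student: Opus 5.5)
The plan is to reduce both cases of the advance-or-stall statement to \Cref{lem:completion}. That lemma needs three things: the coefficients satisfy \eqref{eq:upper-a} and \eqref{eq:lower-a}; there is an active pair $(j,\sigma)$ with $j\le k$; and $y\notin\interior L_{j,\sigma}$. The coefficient conditions were checked right after \eqref{eq:chosen-a}: with $\xi=1/4$ the upper bounds hold, and with $\alpha=32\sec\theta$ the lower bound becomes $\xi\ge51/256$. So the work is to pick the active pair in each case and confirm the index $\max\{k,j+1\}$ comes out right.

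\textbf{Advance case, $P(y)=0$.} Take $j=k$ and any sign tuple $\sigma\in\{-1,+1\}^{d-k-1}$. This is allowed because $k\le d-2$, so stage $k$ is among the stages in \eqref{eq:core}. By \eqref{eq:boundary-anchor}, $y\in I_k\subseteq\partial L_{k,\sigma}$. Hence $\dist(y,L_{k,\sigma})=0=P(y)$, so the pair is active, and $y\notin\interior L_{k,\sigma}$. \Cref{lem:completion} then gives $g\in\partial f(y)$ tangent to $I_{\max\{k,k+1\}}=I_{k+1}$.

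\textbf{Stall case, $P(y)>0$.} Choose $(j,\sigma)$ attaining the maximum in $P(y)$, so $\dist(y,L_{j,\sigma})=P(y)>0$. This forces $y\notin L_{j,\sigma}$, and in particular $y\notin\interior L_{j,\sigma}$. It remains to show $j<k$, so that $\max\{k,j+1\}=k$ and the lemma yields $g\in\T_yI_k$.

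\begin{itemize}
\item For $j\ge k$: we have $y\in I_k\subseteq I_j\subseteq\partial L_{j,\sigma}$ by the nesting in \Cref{lem:geometry} and \eqref{eq:boundary-anchor}. Every stage-$j$ ramp therefore vanishes at $y$, so no such pair can attain a positive maximum.
\item For $j<k$: any maximizing pair satisfies $j\le k-1$. Hence $j+1\le k$, and the index is $k$.
\end{itemize}

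I expect no real obstacle, since all the geometry (the margin estimates and the box-cancellation argument) already lives in \Cref{lem:margin,lem:oldramp,lem:completion}. The one point needing care is the claim that stages $j\ge k$ contribute zero ramps on $I_k$. This is exactly what makes the stall index $k$ rather than $j+1$. It relies on \eqref{eq:spaces-coord} giving $I_k\subseteq I_j$, and hence $I_k\subseteq\partial L_{j,\sigma}$, for every $\sigma$.
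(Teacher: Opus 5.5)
Your proposal is correct and matches the paper's proof. In the advance case you apply \Cref{lem:completion} to the fresh family $j=k$ via \eqref{eq:boundary-anchor}. In the stall case you rule out active pairs with $j\ge k$ through the nesting $I_k\subseteq I_j\subseteq\partial L_{j,\sigma}$, so that $\max\{k,j+1\}=k$.
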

At a query $y\in I_k$ inside every halfspace, the oracle therefore reveals at most one new direction (\emph{advance}). At a query $y\in I_k$ outside some halfspace, it reveals no new direction (\emph{stall}). This stall case gives some justification for assumption A2 in \citet[Appendix~A]{CB23}: querying outside the halfspaces need not help the algorithm discover a new information direction. \Cref{fig:stall-advance} illustrates both cases.
\begin{proof}
If $P(y)=0$, use the ``fresh'' family of ramps $j=k$. Every halfspace $L_{k,\sigma}$ in this family has $I_k$ in its boundary by \eqref{eq:boundary-anchor}, so every pair $(k,\sigma)$ is active at $y$. \Cref{lem:completion} provides a subgradient of $f$ at $y$ tangent to $I_{\max\{k,k+1\}}=I_{k+1}$. Thus at most one new information direction is revealed: an \emph{advance}.

If $P(y)>0$, choose any active pair $(j,\sigma)$, meaning $P(y)=\dist(y,L_{j,\sigma})$. For every $j\ge k$, \eqref{eq:boundary-anchor} and the nesting in \Cref{lem:geometry} give $I_k\subseteq I_j\subseteq\partial L_{j,\sigma}$. Therefore, the ramps with $j\ge k$ have value zero at $y$. Thus, the active pair must have $j<k$, and $y$ lies outside $L_{j,\sigma}$. \Cref{lem:completion} provides a subgradient of $f$ at $y$ tangent to $I_{\max\{k,j+1\}}=I_k$. This is a \emph{stall}: no new information direction is revealed, even though the next query need not equal the current one.
\end{proof}

\begin{figure}[!htbp]
\centering
\includegraphics[width=\linewidth]{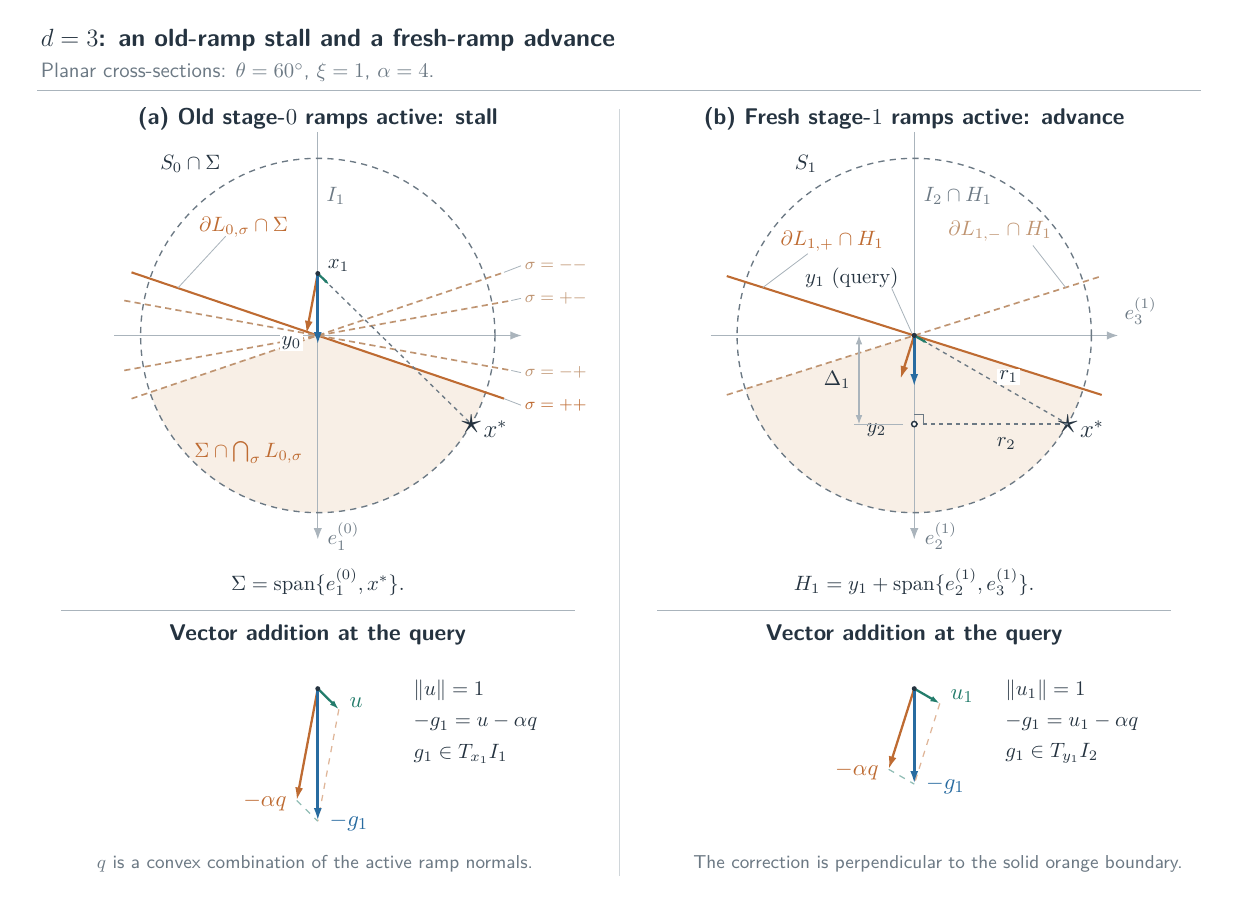}
\caption[Euclidean illustrations of advance-or-stall.]{Euclidean illustrations of advance-or-stall in dimension $d=3$, using planar cross-sections. Left: the query $x_1$ lies on the negative $e_1^{(0)}$ side of $y_0$ and activates all four stage-zero ramps. Their boundary traces in $\Sigma=\spanof(e_1^{(0)},x^*)$ are labelled by their sign pairs. The negative returned subgradient $-g_1$ (blue) is tangent to $I_1$: a \emph{stall}. Right: the query is $x_1=y_1\in I_1$, shown in the normal plane $H_1$. The correction $-\alpha q$ from the fresh ramp $\dist(\cdot,L_{1,+})$ cancels the $e_3^{(1)}$ component of $u_1$, allowing an \emph{advance} into $I_2$.
\par\smallskip
As in \Cref{fig:d2}, green, orange, and blue arrows denote the unit direction toward $x^*$, $-\alpha q$ (see the proof of \Cref{lem:completion}), and their sum $-g_1$, the negative of the subgradient returned to the algorithm. The illustrative parameters are $\theta=60^\circ$, tail prefactor $\xi=1$, rather than the proof value $\xi=1/4$, and $\alpha=4$.}\label{fig:stall-advance}
\end{figure}

\subsection{The lower bound under the geodesic-span assumption}\label{subsec:A1proof}\label{subsec:gspan}
A standard device in Euclidean lower bounds is to first restrict the algorithm to the linear span of its past gradients:
\[
x_0=\xref,\qquad
x_k\in\xref+\spanof(g_0,\ldots,g_{k-1})\quad(k\ge1);
\]
see \citet[Chapter~7]{NY83} and \citet[Section~2.1.2]{Nesterov04}. Equivalently, each query lies in the affine span generated by the preceding queries and their gradient directions. The natural hyperbolic counterpart uses what we call the \emph{geodesic span}, see \eqref{eq:gspan}. We assume that the algorithm chooses each new query in the geodesic span of its previous queries and subgradients:
\begin{equation}\tag{A1}\label{eq:A1}
x_0=\xref,\qquad
x_k\in\gspan(x_0,\ldots,x_{k-1},g_0,\ldots,g_{k-1})\quad(k\ge1).
\end{equation}
Following \citet[Appendix~A]{CB23}, we call this the \emph{geodesic-span assumption}, or A1.

We first prove the $\Omega(\zeta_r/\eps^2)$ lower bound under \eqref{eq:A1}; \Cref{sec:unrestricted} then removes the assumption by a general black-box reduction. Many intrinsic first-order methods already satisfy A1, including the subgradient method with exponential retraction. More generally, all intrinsic first-order algorithms familiar to us that use the exponential map $\exp$ as retraction, and parallel transport or the differential of $\exp$ as transporter \citep[Section~10.5]{Boumal23}, satisfy this assumption.

\begin{theorem}[Tight lower bound under A1]\label{thm:A1}
Let $c=2^{-16}$ and $\eps_0=2^{-9}$. For $r,M>0$ and $0<\eps\le\eps_0$, set
\[
T=\left\lfloor c\frac{\zeta_r}{\eps^2}\right\rfloor,
\qquad d=T.
\]
For every deterministic first-order algorithm on $\HH^d$ satisfying \eqref{eq:A1}, there exist $f\in\calF_{r,M}^d$ and a first-order oracle $\calO_f$ such that running the algorithm on $\calO_f$ produces queries $x_k$ satisfying
\[
f(x_k)-f^*\ge2\eps Mr\qquad(0\le k<T).
\]
The minimizer is unique and lies on $\partial B(\xref,r)$. The conclusion holds even if $f^*=0$ is disclosed to the algorithm in advance.
\end{theorem}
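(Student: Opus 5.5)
The plan is to instantiate the hard function \eqref{eq:core} with the coefficients \eqref{eq:chosen-a}, rescaled to Lipschitz constant $M$, and to choose $\theta$ with $\cos\theta$ proportional to $\eps$. The oracle answers each query with the subgradient supplied by \Cref{lem:stall-advance}. An induction then confines the queries to the information submanifolds, where the gap is controlled by the radius bound \eqref{eq:radii}.

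\textbf{Parameters and function.} Set $\cos\theta=2^{15/2}\eps$. Since $\eps\le2^{-9}$, this gives $\cos\theta\le2^{-3/2}$ and $\cos^2\theta\le\tfrac18\le\tfrac12$. Take $d=T$. If $T\le1$, I would instead work in $\HH^2$ (or note that the claim is vacuous when $T=0$); for $T=1$ only the query $x_0=\xref$ matters, and the argument below covers it.

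Build the geometry of \Cref{sec:geometry} with this $\theta$. Let $f_0$ be the function \eqref{eq:core} with $\alpha=32\sec\theta$, and set $f=Mf_0/(1+\alpha)$. Then $f$ has the following properties:
\begin{itemize}
\item it is g-convex and $M$-Lipschitz (\Cref{subsec:gconvex});
\item by \Cref{lem:margin}, every $L_{j,\sigma}$ contains $x^*$, so $f^*=f(x^*)=0$, and the minimizer is unique because $f_0\ge\dist(\cdot,x^*)$;
\item $x^*\in S_{d-1}\subseteq S_0=\partial B(\xref,r)$, so $f\in\calF^d_{r,M}$ and the minimizer lies on the boundary of the ball.
\end{itemize}
Since $f$ is fixed before the algorithm runs, disclosing $f^*=0$ gives the algorithm nothing useful.

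\textbf{Resisting oracle and induction.} Maintain a counter $n_k$, the number of advances made before query $k$, with $n_0=0$. The inductive claim is that $x_k\in I_{n_k}$, that $n_k\le k$, and that all earlier returned subgradients $g_i$ ($i<k$) are tangent to $I_{n_k}$.

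At query $x_k\in I_{n_k}$ with $n_k\le d-2$, return $(f(x_k),g_k)$, where $g_k$ is the subgradient of \Cref{lem:stall-advance} rescaled by $M/(1+\alpha)$. Then $g_k$ is tangent to $I_{n_k+1}$ in the advance case and to $I_{n_k}$ in the stall case; set $n_{k+1}$ to $n_k+1$ or $n_k$ accordingly.

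Because the $I_j$ are nested, the totally geodesic submanifold $I_{n_{k+1}}$ contains $x_0,\ldots,x_k$ and is tangent to $g_0,\ldots,g_k$. Since $\gspan$ is the smallest such submanifold, \eqref{eq:A1} forces $x_{k+1}\in I_{n_{k+1}}$. The base case is $x_0=\xref=y_0\in I_0$. At most one advance occurs per query, so $n_k\le k\le T-1=d-1$. At a query with $n_k=d-1$, which can only be the last one, any subgradient may be returned, since no later query depends on it.

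\textbf{Gap.} For $k<T$, \Cref{lem:geometry} shows that $y_{n_k}$ is the projection of $x^*$ onto $I_{n_k}$, so $\dist(x_k,x^*)\ge r_{n_k}$. Moreover,
\[
n_k\le T-1\le 2^{-16}\frac{\zeta_r}{\eps^2}=\frac{\zeta_r}{2\cos^2\theta},
\]
so \eqref{eq:radii} gives $r_{n_k}\ge r/2$. Using $1+\alpha\le33\sec\theta$,
\[
f(x_k)\ge\frac{M r\cos\theta}{66}=\frac{2^{15/2}}{66}\,\eps Mr\ge2\eps Mr,
\]
since $2^{15/2}\approx181>132$.

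\textbf{Main difficulty.} The step needing the most care is the bookkeeping that lets the counter $n_k$ lag behind $k$ after stalls, together with checking that $\gspan$ stays inside $I_{n_k}$. The remaining work is arithmetic on the constants.
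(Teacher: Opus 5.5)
Your overall strategy matches the paper's proof. You use the same function \eqref{eq:core} with coefficients \eqref{eq:chosen-a}, rescaled by $M/(1+\alpha)$, take subgradients from \Cref{lem:stall-advance}, run an A1 induction that confines queries to the $I_k$, and get the gap from $r_k\ge r/2$. Your choice $\cos\theta=2^{15/2}\eps$ also works: it makes $c\zeta_r/\eps^2=\zeta_r/(2\cos^2\theta)$ exactly, and $2^{15/2}/66>2$. Your $T\le1$ hedge is unnecessary, since $\zeta_r\ge1$ and $\eps\le2^{-9}$ give $T\ge4$. That also ensures $d\ge2$, whereas switching to $\HH^2$ would not match $d=T$.

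The gap is in the oracle. \Cref{thm:A1} requires a first-order oracle in the sense of \Cref{subsec:problem}: a single map $x\mapsto(f(x),g(x))$ with $g(x)\in\partial f(x)$. Your rule returns the subgradient of \Cref{lem:stall-advance} at the history-dependent index $n_k$. Nothing forces the same answer at a repeated point, and with the construction in that lemma's proof the answers do differ.

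For example, an A1 algorithm may query $x_0=x_1=y_0$. At $k=0$, the fresh stage-$0$ family yields a vector in $\T_{y_0}I_1=\spanof(e_1)$, which has zero $e_2$-component. At $k=1$ you have $n_1=1$, so you use the fresh stage-$1$ family. Its heads contribute $-\alpha/\norm{W_1}$ in the $e_2$ direction, so the returned vector has $e_2$-component $-\cos\theta\sin\theta-\alpha/\norm{W_1}\ne0$. The same point thus gets two different answers, and no valid $\calO_f$ reproduces your transcript.

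The repair is what the paper does: fix the selection once and for all as a function of the point. At each $y\in I_{d-2}$, use the smallest index $m$ with $y\in I_m$; memoizing earlier answers would also work. Since $m\le n_k$, the answer is tangent to $I_{m+1}\subseteq I_{n_k+1}$ in the advance case and to $I_m\subseteq I_{n_k}$ in the stall case, so your induction still goes through. In fact the cruder invariant $x_k\in I_k$ then suffices. The $n_k$ bookkeeping you singled out as the main difficulty is unnecessary; the point that actually needs care is that the oracle is a genuine map.
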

\begin{proof}
Use the function \eqref{eq:core} with coefficients \eqref{eq:chosen-a} and choose $\theta\in(0,\pi/2)$ so that
\begin{equation}\tag{angle choice}\label{eq:theta-main}
\cos\theta=132\eps.
\end{equation}
Since $\eps\le2^{-9}$, we have $\cos\theta\le132/512<1/2$. Also $d=T\ge4$ because $\zeta_r\ge1$. Our parameter choice gives
\begin{equation}\tag{geometric horizon}\label{eq:horizon}
\cos^2\theta\le\frac12,
\qquad d\le\frac{\zeta_r}{2\cos^2\theta},
\end{equation}
where the second inequality follows from $2\cdot132^2<2^{16}$. By \Cref{lem:geometry}, $r_k\ge r/2$ for $0\le k\le d-1$.

By \Cref{lem:margin}, every ramp vanishes at $x^*$. The function $f$ in \eqref{eq:core} is therefore globally g-convex, $(1+\alpha)$-Lipschitz, and satisfies
\begin{equation}\tag{core gap}\label{eq:core-gap}
f^*=f(x^*)=0,\qquad f(x)\ge\dist(x,x^*).
\end{equation}
Its minimizer is unique and belongs to $S_0=\partial B(\xref,r)$. Define $\widetilde f=\frac{M}{1+\alpha}f$. Then $\widetilde f\in\calF_{r,M}^d$, with the same unique minimizer and minimum value $\widetilde f^*=0$.

\paragraph{Specifying the oracle.}
We first specify an oracle $\calO_f$ for $f$. At each $y\in I_{d-2}$, let $m$ be the smallest index with $y\in I_m$. Choose once and for all a subgradient supplied by \Cref{lem:stall-advance} at that index; it is tangent to $I_{m+1}$. At every other point choose any subgradient, again once and for all. This defines a \emph{single} oracle map, so repeated queries receive the same answer. Define $\calO_{\widetilde f}$ by multiplying both the values and the subgradients returned by $\calO_f$ by $M/(1+\alpha)$.

\paragraph{Running the algorithm.}
Run any algorithm satisfying \eqref{eq:A1} with $\calO_{\widetilde f}$, obtaining queries $x_0,x_1,\ldots$. We show inductively that $x_k\in I_k$ for $0\le k\le d-1$. Initially $x_0=\xref=y_0\in I_0$. If $x_k\in I_k$ and $k\le d-2$, the oracle uses an index $m\le k$ and returns a vector tangent to $I_{m+1}\subseteq I_{k+1}$. Every earlier query and answer is also contained in, or tangent to, $I_{k+1}$, by the inductive hypothesis. Assumption \eqref{eq:A1} and the intrinsic characterization \eqref{eq:gspan} therefore give $x_{k+1}\in I_{k+1}$.

Since $y_k$ is the metric projection of $x^*$ onto $I_k$, \Cref{lem:geometry} and \eqref{eq:core-gap} imply
\[
f(x_k)\ge\dist(x_k,x^*)\ge r_k\ge r/2\qquad(0\le k<T).
\]
Therefore,
\[
\widetilde f(x_k)-\widetilde f^*
\ge\frac{Mr}{2(1+32\sec\theta)}
\ge\frac{Mr\cos\theta}{66}=2\eps Mr.
\]
This completes the proof.
\end{proof}

\section{Removing the geodesic-span assumption}\label{sec:unrestricted}
We have established the $\Omega(\zeta_r/\eps^2)$ lower bound under the geodesic-span assumption \eqref{eq:A1}. We now remove that assumption by a general reduction, independent of our reflected-ramp construction.

In Euclidean optimization, there are two implementations of the resisting-oracle principle for removing the linear-span assumption recalled in \Cref{subsec:A1proof}. The first chooses the information subspaces adaptively: a new query adds at most one direction, and the oracle's answer adds at most one more. In this case, the information subspaces grow by at most two dimensions per query, rather than by one as for linear-span algorithms \citep[Chapter~7]{NY83}.

The second approach hides a function $f:\RR^d\to\RR$ in a larger space $\RR^D$ by adversarially choosing an isometric embedding of $\RR^d$ into $\RR^D$; see \citet[Section~3 and Proposition~2]{CDHS20} and \citet[Theorem~12.3]{Li26}. Both approaches can be implemented for the present construction. We use the second, because it yields a black-box reduction that may be useful for other lower bounds in g-convex optimization.

\subsection{A g-convex Lipschitz lifting}\label{subsec:lifting}
Suppose $f:\HH^d\to\RR$ gives a $T$-query lower bound under \eqref{eq:A1}. We seek to place $f$ inside $\HH^D$, with $D=d+T$, and extend it to $F:\HH^D\to\RR$, so that an unrestricted algorithm minimizing $F$ can be \emph{simulated} by a geodesic-span algorithm minimizing $f$. In Euclidean space this is done with $F(x)=f(U^\top x)$, where $U:\RR^d\to\RR^D$ is an isometric embedding chosen adversarially. For hyperbolic space, choose a totally geodesic copy $\mathcal N\subsetneq\HH^D$ of $\HH^d$. After identifying $f$ with its copy on $\mathcal N$, the natural analogue is $F(x)=f(\Proj_{\mathcal N}(x))$, where $\Proj_{\mathcal N}$ is metric projection, as in \Cref{lem:hyperbolic-projection}. However, this composition is \emph{not} g-convex unless $f$ is constant. We first construct a g-convex replacement for any fixed $\mathcal N$; the adversarial choice of embedding $\mathcal N$ is deferred to \Cref{subsec:blackbox}.

We repair $x\mapsto f(\Proj_{\mathcal N}(x))$ by adding $M\log\cosh\dist(x,\mathcal N)$. This choice is suggested by \eqref{eq:general-projection-triangle}: taking logarithms turns the multiplicative hyperbolic Pythagorean identity into the additive formula
\begin{equation}\tag{log-cosh Pythagoras}\label{eq:logcosh-pythagoras}
\begin{split}
\log\cosh\dist(x,z)
={}&\log\cosh\dist(x,\mathcal N)\\
&+\log\cosh\dist(\Proj_{\mathcal N}(x),z),
\qquad z\in\mathcal N.
\end{split}
\end{equation}
Log-cosh distance potentials have appeared in other contexts, for example hyperbolic barycenter constructions \citep[Section~3]{JK24}.

\begin{lemma}[g-convex Lipschitz lifting]\label{lem:lifting}
Let $\mathcal N\subseteq\HH^D$ be totally geodesic, with $\dim\mathcal N\ge1$. Let $f:\mathcal N\to\RR$ be g-convex and $M$-Lipschitz, where $M>0$. Then
\begin{equation}\tag{g-convex lift}\label{eq:lift}
F(x)=f(\Proj_{\mathcal N}(x))+M\log\cosh\dist(x,\mathcal N)
\end{equation}
is g-convex and $M$-Lipschitz on $\HH^D$, and $F|_{\mathcal N}=f$. If $f$ attains its minimum, then $F$ has the same minimum value and exactly the same minimizers, viewed as points of $\mathcal N\subseteq\HH^D$.

For $x\in\HH^D$, put $p=\Proj_{\mathcal N}(x)$ and $\lambda=\cosh\dist(x,\mathcal N)$. Given $g\in\partial f(p)$, a value and subgradient of $F$ are
\begin{equation}\tag{lifted oracle}\label{eq:lifted-oracle}
F(x)=f(p)+M\log \lambda,
\qquad
G=\frac g\lambda+M\left(x-\frac p\lambda\right)\in\partial F(x).
\end{equation}
The vector formula uses the hyperboloid model, with $g$ regarded as an ambient Lorentz vector.
\end{lemma}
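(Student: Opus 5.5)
The plan is to reduce every claim to a statement along a single ambient geodesic. There \Cref{lem:hyperbolic-projection} and \Cref{lem:busemann-coordinates} turn $F$ into a supremum of nonnegative combinations of Busemann functions.

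\textbf{Reduction to a geodesic of $\mathcal N$.} Fix a geodesic $\gamma\subseteq\HH^D$. By the last claim of \Cref{lem:hyperbolic-projection}, $\Proj_{\mathcal N}(\gamma)$ is a single point or lies in a geodesic $S\subseteq\mathcal N$. In the point case, choose any geodesic $S\subseteq\mathcal N$ through that point; this is possible since $\dim\mathcal N\ge1$. Write $S=\{\ell(s)\}$ with $\ell(s)=\exp_y(sv)$ as in \Cref{lem:busemann-coordinates}. Applying \eqref{eq:general-projection-triangle} twice, once to $\mathcal N$ and once to $S$, gives for $x\in\gamma$
\[
\log\cosh\dist(x,S)=\log\cosh\dist(x,\mathcal N)+\log\cosh\dist(\Proj_{\mathcal N}(x),S),
\]
and the last term vanishes on $\gamma$. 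The same identity shows $\Proj_S(x)=\Proj_{\mathcal N}(x)$ for $x\in\gamma$. Set $h(s)=f(\ell(s))$, a convex $M$-Lipschitz function on $\RR$. Then on $\gamma$,
\[
F(x)=h(s(x))+M\log\cosh\delta(x),
\]
where $\delta=\dist(\cdot,S)$.

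\textbf{Busemann representation.} Write $h=\sup_i(a_is+c_i)$ with $|a_i|\le M$; this is possible because $h$ is convex and $M$-Lipschitz. By \eqref{eq:opposite-busemanns}, $s=\tfrac12(b_{y-v}-b_{y+v})$ and $\log\cosh\delta=\tfrac12(b_{y-v}+b_{y+v})$. Hence
\[
\Phi(x)=\sup_i\Bigl(\tfrac{M+a_i}2\,b_{y-v}(x)+\tfrac{M-a_i}2\,b_{y+v}(x)+c_i\Bigr)
\]
is defined on all of $\HH^D$ and agrees with $F$ on $\gamma$. Each term is a nonnegative combination of Busemann functions whose weights sum to $M$. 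So each term is g-convex and $M$-Lipschitz, and therefore so is the finite-valued supremum $\Phi$.

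Consequently $F|_\gamma$ is convex and $M$-Lipschitz for every geodesic $\gamma$. Since distance is realized along geodesics, this gives global g-convexity and the global $M$-Lipschitz bound. The identity $F|_{\mathcal N}=f$ is immediate from $\delta=0$ there. The minimizer claim follows from $F(x)\ge f(\Proj_{\mathcal N}(x))\ge f^*$, with equality iff $\dist(x,\mathcal N)=0$ and $\Proj_{\mathcal N}(x)$ minimizes $f$.

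\textbf{Oracle formula.} The value formula is just \eqref{eq:metric-projection}. For the subgradient I use that $F$ is g-convex, so it suffices to check $F'(x;w)\ge\innerL{G}{w}$ for all $w\in\T_x\HH^D$. This reduces the claim to a local, first-order computation.

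The smooth term first. By \eqref{eq:metric-projection}, $\lambda=\sqrt{-\innerL{\Pi_Vx}{\Pi_Vx}}$, and projecting its ambient gradient onto $x^\perp$ should give $M\nabla\log\lambda=M(x-p/\lambda)$; I will verify this by direct differentiation.

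The nonsmooth term second. Differentiating $p=\Pi_Vx/\lambda$ gives $dp(w)=\Pi_Vw/\lambda-(d\lambda(w)/\lambda)\,p$. Since $f$ is g-convex with $g\in\partial f(p)$, we get $(f\circ\Proj_{\mathcal N})'(x;w)\ge\innerL{g}{dp(w)}$. Now $g\in V$ and $\innerL{g}{p}=0$, so $\innerL{g}{dp(w)}=\innerL{g}{w}/\lambda$. Adding the two contributions gives $G$.

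I expect the main obstacle to be the g-convexity step. The delicate point is that the Busemann representation is tied to the geodesic $S$, which depends on $\gamma$. What makes it work is that $\Phi$ is globally g-convex yet only needs to coincide with $F$ on $\gamma$. That coincidence requires the two-step Pythagoras identity showing $\dist(\cdot,S)=\dist(\cdot,\mathcal N)$ along $\gamma$, together with a separate check of the degenerate case where the projection is a single point.
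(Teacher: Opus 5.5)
Your proposal is correct and follows essentially the same route as the paper. You reduce to a geodesic $S\subseteq\mathcal N$ containing the projection of $\gamma$, and write $f\circ\ell$ as a supremum of affine supports with slopes in $[-M,M]$. Each support becomes a nonnegative combination of the two opposite Busemann functions with weights summing to $M$. You then verify $G\in\partial F(x)$ by bounding the one-sided directional derivative of $F$ (which exists by g-convexity) using $\dot\lambda_0=-\innerL{p}{w}$ and $\innerL{g}{\dot p_0}=\innerL{g}{w}/\lambda$. The only item you leave implicit is the one-line check, made explicitly in the paper, that $G$ is tangent at $x$, namely $\innerL{g}{x}=\innerL{g}{\Pi_Vx}=\lambda\innerL{g}{p}=0$.
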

Thus one query of a first-order oracle for $f$ suffices to answer one query of $F$.
\begin{proof}
We start with convexity and Lipschitz continuity. Fix a unit-speed ambient geodesic $\gamma$. By \Cref{lem:hyperbolic-projection}, its metric projection onto $\mathcal N$ lies on a geodesic $S\subseteq\mathcal N$ or consists of a single point. In the latter case choose any geodesic $S\subseteq\mathcal N$ through that point, which is possible because $\dim\mathcal N\ge1$.

Parametrize $S$ as in \Cref{lem:busemann-coordinates}, by $\ell(s)=\cosh s\,y+\sinh s\,v$ with $\norm v=1$, and write
\[
\Proj_{\mathcal N}(\gamma(t))=\ell(s(t)),
\qquad \lambda(t)=\cosh\dist(\gamma(t),\mathcal N).
\]
The nearest point in $\mathcal N$ already belongs to $S$, so
\[
\Proj_S(\gamma(t))=\Proj_{\mathcal N}(\gamma(t)),\qquad
\dist(\gamma(t),S)=\dist(\gamma(t),\mathcal N).
\]
The restriction $s\mapsto f(\ell(s))$ is a finite convex $M$-Lipschitz function on $\RR$. It is therefore the supremum of its affine supports $as+b$, with $|a|\le M$. For any such support, \eqref{eq:opposite-busemanns} gives
\begin{equation}\label{eq:lift-busemann-support}
a s(t)+b+M\log \lambda(t)
=b+\tfrac{M+a}{2}b_{y-v}(\gamma(t))+\tfrac{M-a}{2}b_{y+v}(\gamma(t)).
\end{equation}
The two coefficients are nonnegative and sum to $M$. Since the Busemann functions are g-convex and $1$-Lipschitz (\Cref{subsec:busemann}), the right-hand side is convex and $M$-Lipschitz in $t$. Taking the supremum over the affine supports gives exactly $F(\gamma(t))$. A finite-valued supremum of $M$-Lipschitz convex functions is both $M$-Lipschitz and convex. Since $\gamma$ was arbitrary, $F$ is g-convex and $M$-Lipschitz.

The correction in \eqref{eq:lift} is nonnegative and vanishes exactly on $\mathcal N$. This proves the assertions about the restriction and minimizers.

\paragraph{The oracle formula.}
Fix $x$, $p$, $\lambda$, and $g\in\partial f(p)$ as in the statement, and write $\mathcal N=\HH^D\cap V$. Because $g\in V$ and $\innerL{g}{p}=0$, \eqref{eq:metric-projection} gives $\innerL{g}{x}=\innerL{g}{\Pi_Vx}=\lambda\innerL{g}{p}=0$. Also $\innerL{x-p/\lambda}{x}=0$, because $\innerL{x}{x}=-1$ and $-\innerL{p}{x}=\cosh\dist(x,p)=\lambda$ by \eqref{eq:hyperboloid-distance}. Thus the proposed vector $G$ is tangent to $\HH^D$ at $x$.

We verify its global supporting inequality. Fix $z\in\HH^D$ and put $w=\log_x(z)$, $x_t=\exp_x(tw)$, $p_t=\Proj_{\mathcal N}(x_t)$, and $\lambda_t=\cosh\dist(x_t,\mathcal N)$. Since $g$ is a subgradient of $f$ at $p$, for $0<t\le1$,
\[
F(x_t)-F(x)
\ge\inner{g}{\log_p(p_t)}+M\log(\lambda_t/\lambda).
\]
The already proved g-convexity of $F$ also gives $F(x_t)-F(x)\le t(F(z)-F(x))$. Combining these inequalities,
\begin{equation}\label{eq:lift-support-quotient}
F(z)-F(x)\ge
\frac{\inner{g}{\log_p(p_t)}}{t}
+\frac M t\log(\lambda_t/\lambda).
\end{equation}

We use $\dot x_0$, $\dot p_0$, and $\dot \lambda_0$ to denote the derivatives of the curves $x_t$, $p_t$, and $\lambda_t$ at $t=0$. Note that $\dot x_0=w$. By \eqref{eq:metric-projection},
\begin{equation}\label{eq:projection-curves}
\lambda_t^2=-\innerL{\Pi_Vx_t}{\Pi_Vx_t},\qquad
p_t=\frac{\Pi_Vx_t}{\lambda_t}.
\end{equation}
Differentiating the first equality of \eqref{eq:projection-curves}, and using $\Pi_Vx=\lambda p$, gives
\[
2\lambda\dot \lambda_0=-2\innerL{\Pi_Vx}{\Pi_Vw}
=-2\lambda\innerL{p}{w},
\qquad\text{hence}\quad\dot \lambda_0=-\innerL{p}{w}.
\]
Differentiating the second equality of \eqref{eq:projection-curves} yields
\begin{align*}
\innerL{g}{\dot p_0}
&=\left\langle g,\frac{\Pi_Vw}{\lambda}-\frac{\dot \lambda_0}{\lambda^2}\Pi_Vx\right\rangle_{\mathrm L}\\
&=\frac{\innerL{g}{\Pi_Vw}}\lambda-\frac{\dot \lambda_0}{\lambda}\innerL{g}{p}
=\frac{\innerL{g}{w}}\lambda,
\end{align*}
where $g\in V$ and $\innerL{g}{p}=0$ justify the last equality. The differential of $\log_p$ at $p$ is the identity, since it is inverse to the differential of $\exp_p$ at zero. Thus $\log_p(p_t)/t\to\dot p_0$. Letting $t\downarrow0$ in \eqref{eq:lift-support-quotient} therefore gives
\[
F(z)-F(x)\ge\frac{\innerL{g}{w}}\lambda-\frac M\lambda\innerL{p}{w}
=\innerL{G}{w},
\]
where the last equality uses $\innerL{x}{w}=0$. This is \eqref{eq:subgradient} for $G$, proving that $G$ is a subgradient of $F$ at $x$.
\end{proof}

\subsection{The black-box reduction}\label{subsec:blackbox}
We now give the black-box reduction, following the Euclidean hidden-subspace argument of \citet[Section~3 and Proposition~2]{CDHS20} and \citet[Theorem~12.3]{Li26}.

\begin{theorem}[Black-box removal of the geodesic-span assumption]\label{thm:blackbox}
Let $d\ge2$ and $T\ge1$ be integers, and let $r,M,\Delta>0$. Suppose that for every deterministic first-order algorithm satisfying \eqref{eq:A1} on $\HH^d$, there exist $f\in\calF_{r,M}^d$ and a first-order oracle for $f$ such that the algorithm's first $T$ queries have function gap $f(\cdot)-f^*$ at least $\Delta$.

Then, for every deterministic first-order algorithm on $\HH^{d+T}$, there exist $F\in\calF_{r,M}^{d+T}$ and a first-order oracle for $F$ such that the algorithm's first $T$ queries have function gap at least $\Delta$.
\end{theorem}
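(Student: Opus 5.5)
The plan is to follow the Euclidean hidden-subspace argument, with \Cref{lem:lifting} in place of composition with $U^\top$. Fix a deterministic algorithm $\calA$ on $\HH^D$, $D=d+T$. The goal is to build, query by query, an isometric embedding $\iota:\HH^d\to\mathcal N\subseteq\HH^D$ together with a simulated geodesic-span algorithm $\calB$ on $\HH^d$. The embedding should be fixed only on a growing subspace. Concretely, write $\mathcal N=\HH^D\cap V$ with $\dim V=d+1$ and $V=\iota_{\mathrm{lin}}(\RR^{d+1})$ for a Lorentz isometry $\iota_{\mathrm{lin}}$ sending $e_0$ to $\xref^D$. We require $\xref^D\in\mathcal N$, so that $\iota(\xref^d)=\xref^D$ and $B(\xref^d,r)$ maps into $B(\xref^D,r)$.

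At step $k$ we have committed $\iota_{\mathrm{lin}}$ only on a subspace $E_k=\spanof(e_0,\ldots)$ of $\RR^{d+1}$ of dimension at most $k+1$. This subspace is the linear span, in the model $\HH^d$, of the simulated queries $\tilde x_0,\ldots,\tilde x_{k-1}$ and their subgradients. The geodesic span of these data is $\HH^d\cap E_k$, by \eqref{eq:gspan}.

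Given the transcript, $\calA$ outputs $x_k\in\HH^D$. Let $U_k=\iota_{\mathrm{lin}}(E_k)$. Since we have used at most $k+1\le T$ of the $T$ spare dimensions, we can choose the rest of $\iota_{\mathrm{lin}}$ so that the part of $V$ not yet committed is Lorentz-orthogonal to $x_k$. This keeps $x_k$ in $U_k\oplus V^\perp$. Then $\Pi_Vx_k=\Pi_{U_k}x_k$, and by \eqref{eq:metric-projection} the projection $p_k=\Proj_{\mathcal N}(x_k)$ lies in $\HH^D\cap U_k$. Its preimage $\tilde x_k=\iota^{-1}(p_k)$ therefore lies in $\HH^d\cap E_k$, which is exactly the A1 constraint.

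Define $\calB$ to query $\tilde x_k$, to receive $(f(\tilde x_k),g_k)$, and to pass $\calA$ the lifted answer \eqref{eq:lifted-oracle} for $F=f\circ\iota^{-1}\circ\Proj_{\mathcal N}+M\log\cosh\dist(\cdot,\mathcal N)$. The simulated algorithm then enlarges $E_{k+1}=E_k+\spanof(g_k)$.

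The delicate point is consistency. The map $\calB$ must be a genuine deterministic A1 algorithm on $\HH^d$ that does not depend on $f$, and the final $\iota_{\mathrm{lin}}$ must be one fixed isometry. The lifted answers reference only $x_k$, $p_k$, $\lambda_k$ and $\iota_*g_k$, all of which lie in the committed part $U_{k+1}$ or are already determined. Hence all answers are consistent with every completion of $\iota_{\mathrm{lin}}$.

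For rigor, I would define $\calB$ explicitly. It reconstructs the transcript of $\calA$ from its own transcript, and makes the choice of each new orthonormal direction by a fixed rule, for example Gram--Schmidt on $x_k$ relative to a fixed reference frame. That rule is a function of the past transcript only. Applying the hypothesis to $\calB$ yields $f\in\calF^d_{r,M}$ and an oracle. We then complete $\iota_{\mathrm{lin}}$ arbitrarily on the remaining dimensions, which is possible since the total count $d+1+T\le D+1$ fits. The oracle for $F$ answers at every $x$ via \eqref{eq:lifted-oracle}, using the given oracle for $f$ at $\iota^{-1}\Proj_{\mathcal N}(x)$.

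By \Cref{lem:lifting}, $F$ is g-convex and $M$-Lipschitz, and has the same minimizers and minimum as $f$. So $F\in\calF^{D}_{r,M}$ and $F^*=f^*$. Finally, $F(x_k)\ge f(p_k)=f(\tilde x_k)$ because the log-cosh term is nonnegative. This gives gap at least $\Delta$ for $k<T$.

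The main obstacle is the dimension bookkeeping that keeps $x_k$ orthogonal to the uncommitted part of $V$. Each step commits up to two new directions: the component of $x_k$, which is forced through $\Pi_{U_k}$ and so adds nothing, and $g_k$, which adds one. I would check carefully that choosing the uncommitted part orthogonal to $x_0,\ldots,x_k$ and to all previously committed vectors uses at most $T$ extra ambient dimensions. This mirrors the count in \citet[Proposition~2]{CDHS20}. If the count turns out to be tight, the dimension would have to be increased, but the theorem allows $D=d+T$, which matches one new constraint per query.
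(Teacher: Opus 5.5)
Your proposal is correct and follows essentially the same route as the paper. The paper also grows a Lorentz-isometric embedding adaptively, choosing each newly committed direction orthogonal to all past queries of $\calA$ and to the already committed image (its choice $w_k\perp\T_{\xref^D}J_k$), answers $\calA$ through \Cref{lem:lifting}, and uses the count $d+1+T=D+1$. Note, though, that the final completion of $\iota_{\mathrm{lin}}$ cannot be arbitrary, and consistency does not hold for ``every completion'': the completion must use directions orthogonal to $x_0,\ldots,x_{T-1}$, as your count presupposes, and the new direction of $g_k$ must be committed before the lifted answer containing $\iota_*g_k$ is sent.
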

\begin{proof}
Put $D=d+T$ and fix a deterministic first-order algorithm $\calA=\calA^D$ on $\HH^D$. We inductively define a deterministic algorithm $\mathcal B=\mathcal B^d$ on $\HH^d$ by specifying its first query and its response to successive answers from an arbitrary oracle $\calO_f$. Algorithm $\mathcal B$ runs $\calA$ as a subroutine, simulating its interaction with a first-order oracle. We call this procedure a \emph{simulation}: after selecting a hard function for $\mathcal B$, we construct a single function $F$ and a valid oracle $\calO_F$ for which the simulated queries and answers are exactly the genuine interaction of $\calA$ with $\calO_F$. The rule defining $\mathcal B$ uses no prior knowledge of $f$; only after defining $\mathcal B$ do we choose its hard function and oracle from the hypothesis.

While defining $\mathcal B$, we inductively construct information submanifolds $I_k\subseteq\HH^d$ and isometric embeddings $\iota_k:I_k\to\HH^D$. We also construct the queries $x_k$ of $\calA$, the queries $y_k$ of $\mathcal B$, and their associated projections and subgradients. At each step we verify the invariants below, \ref{eq:simulation-span}--\ref{eq:simulation-subgradients}, which in particular imply that $\mathcal B$ satisfies \eqref{eq:A1}. The $I_k$ here are constructed during the simulation, rather than prescribed as in \Cref{sec:geometry}; we use the same notation because they play the same role as information submanifolds.

\paragraph{Initialization.}
To start the induction, take both reference points to be $e_0$ in their respective hyperboloid models, without loss of generality:
\[
\xref^d=e_0\in\RR^{d+1},\qquad \xref^D=e_0\in\RR^{D+1}.
\]
Set $I_0=\{\xref^d\}\subseteq\HH^d$, and let $\iota_0:I_0\to\HH^D$ map $\xref^d$ to $\xref^D$. With an initially empty simulated transcript, $\calA$ determines its first query $x_0\in\HH^D$. Put
\[
p_0=\xref^D,\qquad y_0=\xref^d,\qquad
\lambda_0=\cosh\dist(x_0,p_0).
\]
The first query of $\mathcal B$ is $y_0=\xref^d$.

\paragraph{Inductive hypotheses.}
Consider the beginning of round $0\le k<T$. Algorithm $\mathcal B$ has received the oracle answers to its first $k$ queries $y_0,\ldots,y_{k-1}$. These answers have been used to construct $I_k$ and $\iota_k$, and to simulate the first $k$ oracle interactions of $\calA$. Thus the next query $x_k$ of $\calA$ has already been determined. The next query of $\mathcal B$ is then given by
\[
p_k=\Proj_{\iota_k(I_k)}(x_k),\qquad
 y_k=\iota_k^{-1}(p_k),\qquad
 \lambda_k=\cosh\dist(x_k,p_k).
\]
For $j<k$, write $(f(y_j),g_j)$ for the answer of $\calO_f$ at $y_j$, and $\widetilde g_j$ for the embedded subgradient used in the simulated reply to $\calA$. We inductively maintain the following five properties for $0\le k\le T$, with $(\mathrm{IH}2)_k$ required only when $k<T$:
\begin{equation}\tag*{$(\mathrm{IH}1)_k$}\label{eq:simulation-span}
\begin{gathered}
I_k=\gspan(\xref^d,y_0,\ldots,y_{k-1},g_0,\ldots,g_{k-1}),\\
\dim I_k\le\min\{k,d\};
\end{gathered}
\end{equation}
\begin{equation}\tag*{$(\mathrm{IH}2)_k$}\label{eq:simulation-query}
y_k\in I_k\qquad(k<T);
\end{equation}
\begin{equation}\tag*{$(\mathrm{IH}3)_k$}\label{eq:simulation-extensions}
\iota_k|_{I_j}=\iota_j\qquad(0\le j<k);
\end{equation}
\begin{equation}\tag*{$(\mathrm{IH}4)_k$}\label{eq:simulation-projections}
\Proj_{\iota_k(I_k)}(x_j)=p_j=\iota_k(y_j)
\qquad(0\le j\le k,\ j<T);
\end{equation}
\begin{equation}\tag*{$(\mathrm{IH}5)_k$}\label{eq:simulation-subgradients}
d\iota_k(y_j)[g_j]=\widetilde g_j\qquad(0\le j<k).
\end{equation}
Every $\iota_k$ is a totally geodesic isometric embedding fixing the reference points; the construction below realizes it as the restriction of a linear map that preserves the Lorentz inner product. Conditions $(\mathrm{IH}4)_k$ and $(\mathrm{IH}5)_k$ ensure that enlarging the embedding does not change any previously used projection or embedded subgradient. At $k=0$, $(\mathrm{IH}1)_0$ is the definition of $I_0$, and $(\mathrm{IH}2)_0$ holds because $y_0=\xref^d\in I_0$. Condition $(\mathrm{IH}4)_0$ follows from
\[
\Proj_{\iota_0(I_0)}(x_0)=\xref^D=p_0=\iota_0(y_0).
\]
Conditions $(\mathrm{IH}3)_0$ and $(\mathrm{IH}5)_0$ are vacuous. We now define $\mathcal B$'s response to its oracle answer at round $k<T$ and verify the invariants $(\mathrm{IH}1)_{k+1}$--$(\mathrm{IH}5)_{k+1}$.

\paragraph{Receiving an oracle answer and enlarging the information submanifold.}
At its already determined query $y_k$, algorithm $\mathcal B$ receives $(f(y_k),g_k)$ from $\calO_f$. Set
\[
I_{k+1}=\gspan(I_k,y_k,g_k).
\]
By $(\mathrm{IH}2)_k$, $y_k\in I_k$, so adjoining $g_k$ adds at most one dimension. Together with $(\mathrm{IH}1)_k$, this proves $(\mathrm{IH}1)_{k+1}$. We next extend the embedding to $I_{k+1}$ so that $g_k$ can be mapped into the tangent space $\T_{p_k}\HH^D$ and used to answer $\calA$'s query $x_k$.

\paragraph{Extending the partial embedding.}
If $I_{k+1}=I_k$, set $\iota_{k+1}=\iota_k$. Otherwise, $\dim I_{k+1}=\dim I_k+1$, so $\dim I_k<d$. Choose a unit vector $v_k\in\T_{\xref^d}I_{k+1}$ orthogonal to $\T_{\xref^d}I_k$; this choice is unique up to sign. Define
\[
J_k=\gspan\bigl(\iota_k(I_k),x_0,\ldots,x_k\bigr).
\]
There is room to choose a unit vector $w_k\in\T_{\xref^D}\HH^D$ orthogonal to $\T_{\xref^D}J_k$, because
\[
\dim J_k\le\dim I_k+k+1<d+k+1\le d+T=D.
\]
The choices of $v_k$ and $w_k$ are made by fixed deterministic rules, specified below. We seek an extension satisfying
\[
\iota_{k+1}|_{I_k}=\iota_k,\qquad
d\iota_{k+1}(\xref^d)[v_k]=w_k.
\]

To construct it, put $E_k=\RR\xref^d\oplus\T_{\xref^d}I_k$, so that $I_k=\HH^d\cap E_k$. The map $\iota_k$ is the restriction of a linear map $\overline\iota_k:E_k\to\RR^{D+1}$ that preserves the Lorentz inner product and maps $\xref^d$ to $\xref^D$.\footnote{Indeed, for $a\in\RR$ and $u\in\T_{\xref^d}I_k$, set $\overline\iota_k(a\xref^d+u)=a\xref^D+d\iota_k(\xref^d)[u]$.} Let $E_{k+1}=E_k\oplus\RR v_k$, so that $I_{k+1}=E_{k+1}\cap\HH^d$. Define the linear map $\overline\iota_{k+1}:E_{k+1}\to\RR^{D+1}$~by
\begin{equation}\label{eq:linear-embedding-extension}
\overline\iota_{k+1}(z+bv_k)=\overline\iota_k(z)+bw_k
\qquad(z\in E_k,\ b\in\RR).
\end{equation}
We verify that this map preserves the Lorentz inner product and the positive sheet of the hyperboloid.

First, $v_k$ is Lorentz-orthogonal to $E_k$: it is tangent at $\xref^d$, and was chosen orthogonal to $\T_{\xref^d}I_k$. Similarly, $w_k$ is Lorentz-orthogonal to $\overline\iota_k(E_k)$: it is tangent at $\xref^D$, and was chosen orthogonal to $\T_{\xref^D}J_k$, which contains $\T_{\xref^D}\iota_k(I_k)$. Both vectors have Lorentz norm one. Consequently, for $z,z'\in E_k$ and $b,b'\in\RR$,
\[
\begin{aligned}
\innerL{\overline\iota_{k+1}(z+bv_k)}{\overline\iota_{k+1}(z'+b'v_k)}
&=\innerL{\overline\iota_k(z)}{\overline\iota_k(z')}+bb'\\
&=\innerL{z}{z'}+bb'\\
&=\innerL{z+bv_k}{z'+b'v_k}.
\end{aligned}
\]
Thus $\overline\iota_{k+1}$ preserves the Lorentz inner product.

Next, let $h\in I_{k+1}$. Preservation of the Lorentz inner product gives
\[
\innerL{\overline\iota_{k+1}(h)}{\overline\iota_{k+1}(h)}
=\innerL{h}{h}=-1.
\]
Moreover, since $\overline\iota_{k+1}(\xref^d)=\xref^D=e_0$,
\[
-\innerL{\overline\iota_{k+1}(h)}{\xref^D}
=-\innerL{h}{\xref^d}>0.
\]
Hence $\overline\iota_{k+1}(h)$ belongs to the positive sheet $\HH^D$.

The restriction of $\overline\iota_{k+1}$ to $I_{k+1}$ is therefore the required isometric embedding $\iota_{k+1}$, with totally geodesic image $\HH^D\cap\overline\iota_{k+1}(E_{k+1})$ by \eqref{eq:gspan}. The extension agrees with $\iota_k$ on $I_k$ by definition. Its differential sends $v_k$ to $w_k$, since the derivative of the ambient linear map is the map itself. Thus $(\mathrm{IH}3)_{k+1}$ follows from $(\mathrm{IH}3)_k$.

\paragraph{Preserving the projections and subgradients.}
Suppose $I_k\subsetneq I_{k+1}$. As shown above, $w_k$ is Lorentz-orthogonal to $\overline\iota_k(E_k)$. Moreover, since $x_j\in J_k$ for $0\le j\le k$, our choice of $w_k$ gives $\innerL{x_j}{w_k}=0$ for $0\le j\le k$. By \eqref{eq:linear-embedding-extension},
\[
\overline\iota_{k+1}(E_{k+1})=\overline\iota_k(E_k)\oplus\RR w_k.
\]
Consequently,
\begin{equation}\label{eq:projection-preservation}
\Pi_{\overline\iota_{k+1}(E_{k+1})}x_j
=\Pi_{\overline\iota_k(E_k)}x_j+\innerL{x_j}{w_k}w_k
=\Pi_{\overline\iota_k(E_k)}x_j
\qquad(0\le j\le k).
\end{equation}
Therefore, by \eqref{eq:metric-projection} and $(\mathrm{IH}4)_k$,
\[
\Proj_{\iota_{k+1}(I_{k+1})}(x_j)
=\Proj_{\iota_k(I_k)}(x_j)=p_j
\qquad(0\le j\le k).
\]
Since $y_j\in I_k$ for every $j\le k$ and $\iota_{k+1}|_{I_k}=\iota_k$, we also have
\[
\iota_{k+1}(y_j)=\iota_k(y_j)=p_j
\qquad(0\le j\le k).
\]
This establishes the entries of $(\mathrm{IH}4)_{k+1}$ indexed by $j\le k$; when $k<T-1$, the new entry $j=k+1$ is established below. If $I_{k+1}=I_k$, the same conclusions hold trivially.

For $j<k$, $(\mathrm{IH}1)_k$ gives $g_j\in\T_{y_j}I_k$. Since $\iota_{k+1}|_{I_k}=\iota_k$, we have
\[
d\iota_{k+1}(y_j)[g_j]=d\iota_k(y_j)[g_j]=\widetilde g_j,
\]
where the last equality follows from $(\mathrm{IH}5)_k$.
For the new index $j=k$, define
\[
\widetilde g_k=d\iota_{k+1}(y_k)[g_k]\in\T_{p_k}\HH^D.
\]
This is well-defined because $g_k\in\T_{y_k}I_{k+1}$, and establishes the remaining entry of $(\mathrm{IH}5)_{k+1}$.

\paragraph{Answering $\calA$ and determining the next query of $\mathcal B$.}
Following \eqref{eq:lifted-oracle}, we respond to $\calA$'s query $x_k$ with the pair
\begin{equation}\tag{simulated answer}\label{eq:simulated-answer}
\left(f(y_k)+M\log\lambda_k,\quad
\frac{\widetilde g_k}{\lambda_k}
+M\left(x_k-\frac{p_k}{\lambda_k}\right)\right).
\end{equation}
If $k<T-1$, this reply determines the next query $x_{k+1}$ of $\calA$. Define
\[
\begin{aligned}
p_{k+1}&=\Proj_{\iota_{k+1}(I_{k+1})}(x_{k+1}),\\
y_{k+1}&=\iota_{k+1}^{-1}(p_{k+1}),\qquad
\lambda_{k+1}=\cosh\dist(x_{k+1},p_{k+1}).
\end{aligned}
\]
Algorithm $\mathcal B$ issues $y_{k+1}$ as its next query. By construction $y_{k+1}\in I_{k+1}$, proving $(\mathrm{IH}2)_{k+1}$. The definitions of $p_{k+1}$ and $y_{k+1}$ establish the new $j=k+1$ entry of $(\mathrm{IH}4)_{k+1}$. This completes the inductive step. At $k=T-1$, the simulation stops after sending \eqref{eq:simulated-answer}; no further query is needed. At this point, $(\mathrm{IH}1)_T$, $(\mathrm{IH}3)_T$, $(\mathrm{IH}4)_T$, and $(\mathrm{IH}5)_T$ have been established.

\paragraph{End of the simulation: determinism and the geodesic-span assumption.}
To make $\mathcal B$ deterministic, we fix the choices of $v_k$ and $w_k$ as follows: project the standard basis vectors $e_1,e_2,\ldots$ in order onto the required subspace and normalize the first nonzero projection. With these rules, every state of the simulation is determined by $\calA$, the known parameters, and the oracle answers already received by $\mathcal B$.

After the first $T$ queries, define $\mathcal B$ to repeat $\xref^d$. This completes the definition of the deterministic algorithm $\mathcal B$. Its first query is $y_0=\xref^d$, and $(\mathrm{IH}1)_k$ and $(\mathrm{IH}2)_k$ imply \eqref{eq:A1} for the first $T$ queries. The subsequent queries also satisfy \eqref{eq:A1}, so $\mathcal B$ satisfies the geodesic-span assumption for all rounds. Each of the first $T$ rounds of $\mathcal B$ makes exactly one query to $\calO_f$ and produces exactly one simulated oracle answer for $\calA$. We now choose a hard instance for this algorithm.

\paragraph{Selecting the hard instance and completing the embedding.}
Now apply the hypothesis of \Cref{thm:blackbox} to $\mathcal B$: choose $f\in\calF_{r,M}^d$ and a fixed oracle $\calO_f$ for which $\mathcal B$'s first $T$ queries have $f(y_k)-f^*\ge\Delta$. Run the preceding simulation with this oracle. Its final state contains an embedding $\iota_T:I_T\to\HH^D$, which need not yet be defined on all of $\HH^d$.

Write $s=\dim I_T$ and set
\[
J_T=\gspan\bigl(\iota_T(I_T),x_0,\ldots,x_{T-1}\bigr).
\]
This submanifold has dimension at most $s+T$, so its normal space at $\xref^D$ in $\HH^D$ has dimension at least $D-s-T=d-s$. Following \eqref{eq:linear-embedding-extension}, let $\iota:\HH^d\to\HH^D$ be the isometric embedding extending $\iota_T$ whose differential at $\xref^d$ maps an orthonormal basis of the complement of $\T_{\xref^d}I_T$ in $\T_{\xref^d}\HH^d$ to $d-s$ orthonormal vectors in this normal space of $J_T$. Since the added directions are orthogonal to all previous queries, the projection-preservation argument of \eqref{eq:projection-preservation} applies to this extension as well. Moreover, the embedded subgradients are unchanged because $\iota$ extends $\iota_T$. Thus, by $(\mathrm{IH}4)_T$ and $(\mathrm{IH}5)_T$, we obtain
\begin{equation}\label{eq:final-projection}
\begin{aligned}
\Proj_{\mathcal N}(x_k)&=p_k=\iota(y_k),\\
d\iota(y_k)[g_k]&=\widetilde g_k,
\end{aligned}
\qquad 0\le k<T,
\qquad \mathcal N=\iota(\HH^d).
\end{equation}

\paragraph{One function and one oracle realize the simulation.}
Apply \Cref{lem:lifting} to $f\circ\iota^{-1}$ on $\mathcal N$, obtaining
\[
F(x)=f\bigl(\iota^{-1}(\Proj_{\mathcal N}(x))\bigr)
+M\log\cosh\dist(x,\mathcal N).
\]
Define $\calO_F$ on all of $\HH^D$ by querying $\calO_f$ at $y=\iota^{-1}(\Proj_{\mathcal N}(x))$, transporting the returned subgradient $g$ by $d\iota(y)[g]$, and applying \eqref{eq:lifted-oracle}. Since $\iota$ is an isometric embedding, $d\iota(y)[g]$ is a subgradient of $f\circ\iota^{-1}$ at $\iota(y)$, intrinsically on $\mathcal N$. Thus \Cref{lem:lifting} shows that this defines a valid first-order oracle for $F$.

By \eqref{eq:final-projection}, the oracle's answer at each $x_k$ is exactly \eqref{eq:simulated-answer}. Both executions of $\calA$ begin with the same query $x_0$. Since they receive identical oracle answers at corresponding queries, determinism of $\calA$ implies inductively that their first $T$ queries coincide. Thus running $\calA$ on $\calO_F$ reproduces the simulated transcript. Since $F^*=f^*$ by \Cref{lem:lifting} and $\lambda_k\ge1$, for $0\le k<T$ we obtain
\begin{equation}\label{eq:lift-gap}
F(x_k)-F^*=f(y_k)-f^*+M\log \lambda_k
\ge f(y_k)-f^*\ge\Delta.
\end{equation}
By \Cref{lem:lifting}, $F$ is g-convex and $M$-Lipschitz, and its minimizers are exactly the images under $\iota$ of the minimizers of $f$. Since
\[
\dist(\xref^D,\iota(y))=\dist(\xref^d,y)
\qquad(y\in\HH^d),
\]
one such minimizer lies in $B(\xref^D,r)$. Thus $F\in\calF_{r,M}^{d+T}$, completing the reduction. If the lower-bound hypothesis holds when $f^*$ is disclosed in advance, the same conclusion holds when $F^*$ is disclosed: the simulation passes the same minimum value to $\calA$, since $F^*=f^*$.
\end{proof}

\paragraph{Proof of \Cref{thm:main}.}\label{subsec:main-finish}
Take $T$ from \eqref{eq:main-budget}. \Cref{thm:A1}, applied in dimension $d=T$, gives the lower bound $2\eps Mr$ for the first $T$ queries of every geodesic-span algorithm. \Cref{thm:blackbox} transfers it to arbitrary deterministic algorithms on $\HH^{2T}$. The minimizer of $f:\HH^T\to\RR$ provided by \Cref{thm:A1} is unique and lies on $\partial B(\xref,r)$, so the lifted minimizer has the same properties. This proves \Cref{thm:main}.

\section{Perspectives}\label{sec:scope}
We suspect that the present lower-bound construction can be used to resolve several other complexity questions. An appropriate smoothing argument should extend our construction to the multiplicative lower bound $\Omega(\sqrt{\zeta_r/\eps})$ for globally $L$-smooth g-convex optimization, i.e., the smooth problem P3 of \citet{CB23}. The intended approach is analogous to the smoothing arguments of \citet{GN15} in Euclidean optimization and \citet[Section~5]{CB23} in g-convex optimization. Such an extension could, in turn, yield a multiplicative lower bound $\Omega(\sqrt{\zeta_r\kappa})$ for smooth strongly g-convex optimization, where $\kappa=L/\mu$ and $\mu$ is the strong-convexity constant, via the reduction of \citet[Appendix~I, Proposition~57]{CB23}. We also expect our $\Omega(\zeta_r/\eps^2)$ lower bound to extend to randomized first-order algorithms, but do not establish that here.

In contrast, the complexity of Lipschitz g-convex optimization in \emph{fixed} dimension---problem P1 of \citet{CB23}---appears to require ideas quite different from those underlying the present construction. Our construction provides a hard query horizon proportional to the dimension $d$, rather than $\zeta_r d$, and therefore does not resolve the conjectured $\zeta_r d$ dependence for problem P1.

\Needspace{11\baselineskip}
\section*{Acknowledgments}
\addcontentsline{toc}{section}{Acknowledgments}
The author thanks Nicolas Boumal for a discussion of the proof and for helpful comments on the abstract.

\paragraph{AI.}
The author used GPT-6 Pro in preparing this work. The geometric setup and original hard-function ansatz are due to \citet[Appendix~A]{CB23}; see \Cref{sec:geometry,subsec:ansatz}. The use of reflections and the advance-or-stall scheme (\Cref{subsec:ansatz,subsec:completion}) were developed by the author before any AI assistance. The tuned normals $W_{j,\sigma}$, given by \eqref{eq:W-reflected} and \eqref{eq:chosen-a}, and their analysis (most importantly \Cref{lem:oldramp}) are due to GPT-6 Pro. Earlier attempts with GPT-5.6 Pro did not yield such a construction. GPT-6 Pro also assisted with the black-box reduction in \Cref{sec:unrestricted}, with writing, and with the generation of \Cref{fig:overview,fig:d2,fig:stall-advance}. The author assumes responsibility for all content.

\renewcommand{\bibsection}{\section*{\refname}\addcontentsline{toc}{section}{\refname}}
\bibliographystyle{plainnat}
\bibliography{tight_lower_bounds_hyperbolic}

@inproceedings{CB23,
  author = {Criscitiello, Christopher and Boumal, Nicolas},
  title = {Curvature and complexity: Better lower bounds for geodesically convex optimization},
  booktitle = {Proceedings of the Thirty Sixth Conference on Learning Theory},
  series = {Proceedings of Machine Learning Research},
  volume = {195}, pages = {2969--3013}, year = {2023}, publisher = {PMLR},
  url = {https://proceedings.mlr.press/v195/criscitiello23a.html}
}

@inproceedings{CB22,
  author = {Criscitiello, Christopher and Boumal, Nicolas},
  title = {Negative curvature obstructs acceleration for strongly geodesically convex optimization, even with exact first-order oracles},
  booktitle = {Proceedings of the Thirty Fifth Conference on Learning Theory},
  series = {Proceedings of Machine Learning Research},
  volume = {178}, pages = {496--542}, year = {2022}, publisher = {PMLR},
  url = {https://proceedings.mlr.press/v178/criscitiello22a.html}
}

@inproceedings{ZS16,
  author = {Zhang, Hongyi and Sra, Suvrit},
  title = {First-order methods for geodesically convex optimization},
  booktitle = {Proceedings of the Twenty Ninth Conference on Learning Theory},
  series = {Proceedings of Machine Learning Research},
  volume = {49}, pages = {1617--1638}, year = {2016}, publisher = {PMLR},
  url = {https://proceedings.mlr.press/v49/zhang16b.html}
}

@inproceedings{HM21,
  author = {Hamilton, Linus and Moitra, Ankur},
  title = {A no-go theorem for robust acceleration in the hyperbolic plane},
  booktitle = {Advances in Neural Information Processing Systems},
  volume = {34}, year = {2021},
  url = {https://proceedings.neurips.cc/paper/2021/hash/201d546992726352471cfea6b0df0a48-Abstract.html}
}

@article{Karcher77,
  author = {Karcher, Hermann},
  title = {Riemannian center of mass and mollifier smoothing},
  journal = {Communications on Pure and Applied Mathematics},
  volume = {30}, number = {5}, pages = {509--541}, year = {1977},
  doi = {10.1002/cpa.3160300502}
}

@article{Fletcher09,
  author = {Fletcher, P. Thomas and Venkatasubramanian, Suresh and Joshi, Sarang},
  title = {The geometric median on {Riemannian} manifolds with application to robust atlas estimation},
  journal = {NeuroImage}, volume = {45}, number = {1, Supplement},
  pages = {S143--S152}, year = {2009}, doi = {10.1016/j.neuroimage.2008.10.052}
}

@inproceedings{BFGOWW19,
  author = {B{\"u}rgisser, Peter and Franks, Cole and Garg, Ankit and Oliveira, Rafael and Walter, Michael and Wigderson, Avi},
  title = {Towards a theory of non-commutative optimization: Geodesic first and second order methods for moment maps and polytopes},
  booktitle = {2019 IEEE 60th Annual Symposium on Foundations of Computer Science},
  pages = {845--861}, year = {2019}, publisher = {IEEE},
  doi = {10.1109/FOCS.2019.00055}
}

@book{BH99,
  author = {Bridson, Martin R. and Haefliger, Andr{\'e}},
  title = {Metric Spaces of Non-Positive Curvature},
  series = {Grundlehren der mathematischen Wissenschaften}, volume = {319},
  publisher = {Springer}, year = {1999}, doi = {10.1007/978-3-662-12494-9}
}

@book{Ratcliffe19,
  author = {Ratcliffe, John G.}, title = {Foundations of Hyperbolic Manifolds},
  edition = {3rd}, series = {Graduate Texts in Mathematics}, volume = {149},
  publisher = {Springer}, year = {2019}, doi = {10.1007/978-3-030-31597-9}
}

@book{Boumal23,
  author = {Boumal, Nicolas},
  title = {An Introduction to Optimization on Smooth Manifolds},
  publisher = {Cambridge University Press}, year = {2023},
  doi = {10.1017/9781009166164}
}

@book{Nesterov04,
  author = {Nesterov, Yurii},
  title = {Introductory Lectures on Convex Optimization: A Basic Course},
  series = {Applied Optimization}, volume = {87}, publisher = {Kluwer Academic Publishers},
  year = {2004}
}

@book{NY83,
  author = {Nemirovsky, A. S. and Yudin, D. B.},
  title = {Problem Complexity and Method Efficiency in Optimization},
  publisher = {Wiley}, year = {1983}
}

@article{AF06,
  author = {Azagra, Daniel and Ferrera, Juan},
  title = {Inf-convolution and regularization of convex functions on {Riemannian} manifolds of nonpositive curvature},
  journal = {Revista Matem{\'a}tica Complutense}, volume = {19}, number = {2},
  pages = {323--345}, year = {2006}, doi = {10.5209/rev_REMA.2006.v19.n2.16592}
}

@article{Rusciano20,
  author = {Rusciano, Alexander},
  title = {A {Riemannian} Corollary of {Helly}'s Theorem},
  journal = {Journal of Convex Analysis},
  volume = {27}, number = {4}, pages = {1261--1275}, year = {2020},
  doi = {10.68381/jca27067},
  url = {https://journalofconvexanalysis.com/articles/jca27067}
}

@inproceedings{CMB23,
  author = {Criscitiello, Christopher and Mart{\'i}nez-Rubio, David and Boumal, Nicolas},
  title = {Open Problem: Polynomial linearly-convergent method for g-convex optimization?},
  booktitle = {Proceedings of Thirty Sixth Conference on Learning Theory},
  series = {Proceedings of Machine Learning Research}, volume = {195},
  pages = {5950--5956}, year = {2023}, publisher = {PMLR},
  url = {https://proceedings.mlr.press/v195/criscitiello23b.html}
}

@book{Udriste94,
  author = {Udri\c{s}te, Constantin},
  title = {Convex Functions and Optimization Methods on {Riemannian} Manifolds},
  series = {Mathematics and Its Applications}, volume = {297},
  publisher = {Kluwer Academic Publishers}, year = {1994},
  doi = {10.1007/978-94-015-8390-9}
}

@book{Lee18,
  author = {Lee, John M.},
  title = {Introduction to {Riemannian} Manifolds},
  series = {Graduate Texts in Mathematics}, volume = {176},
  edition = {2nd}, publisher = {Springer}, year = {2018},
  doi = {10.1007/978-3-319-91755-9}
}

@incollection{Chen00,
  author = {Chen, Bang-Yen}, title = {Riemannian submanifolds},
  booktitle = {Handbook of Differential Geometry}, volume = {1},
  editor = {Dillen, Franki J. E. and Verstraelen, Leopold C. A.},
  pages = {187--418}, publisher = {North-Holland}, year = {2000},
  doi = {10.1016/S1874-5741(00)80006-0}
}

@article{GN15,
  author = {Guzm{\'a}n, Crist{\'o}bal and Nemirovski, Arkadi},
  title = {On lower complexity bounds for large-scale smooth convex optimization},
  journal = {Journal of Complexity}, volume = {31}, number = {1},
  pages = {1--14}, year = {2015}, doi = {10.1016/j.jco.2014.08.003}
}

@book{bacak2014hadamard,
  author = {Ba\v{c}{\'a}k, Miroslav},
  title = {Convex Analysis and Optimization in {Hadamard} Spaces},
  series = {De Gruyter Series in Nonlinear Analysis and Applications},
  volume = {22}, publisher = {De Gruyter}, year = {2014},
  doi = {10.1515/9783110361629}
}

@article{auderset2005angular,
  author = {Auderset, Claude and Mazza, Christian and Ruh, Ernst A.},
  title = {Angular {Gaussian} and {Cauchy} estimation},
  journal = {Journal of Multivariate Analysis},
  volume = {93}, number = {1}, pages = {180--197}, year = {2005}
}

@article{wiesel2012gconvexity,
  author = {Wiesel, Ami},
  title = {Geodesic convexity and covariance estimation},
  journal = {IEEE Transactions on Signal Processing},
  volume = {60}, number = {12}, pages = {6182--6189}, year = {2012},
  doi = {10.1109/TSP.2012.2218241}
}

@article{wiesel2015gconvexity,
  author = {Wiesel, Ami and Zhang, Teng},
  title = {Structured robust covariance estimation},
  journal = {Foundations and Trends in Signal Processing},
  volume = {8}, number = {3}, pages = {127--216}, year = {2014},
  doi = {10.1561/2000000053}
}

@misc{ciobotaru2020geometrical,
  author = {Ciobotaru, Corina and Mazza, Christian},
  title = {Geometrical and statistical properties of {M}-estimates of scatter on {Grassmann} manifolds},
  year = {2020}, howpublished = {arXiv:1812.11605v2},
  url = {https://arxiv.org/abs/1812.11605v2}
}

@inproceedings{franksmoitra2020,
  author = {Franks, William Cole and Moitra, Ankur},
  title = {Rigorous guarantees for {Tyler}'s {M}-estimator via quantum expansion},
  booktitle = {Proceedings of the Thirty Third Conference on Learning Theory},
  series = {Proceedings of Machine Learning Research},
  volume = {125}, pages = {1601--1632}, year = {2020}, publisher = {PMLR},
  url = {https://proceedings.mlr.press/v125/franks20a.html}
}

@article{tang2021integrated,
  author = {Tang, Tiffany M. and Allen, Genevera I.},
  title = {Integrated principal components analysis},
  journal = {Journal of Machine Learning Research},
  volume = {22}, number = {198}, pages = {1--71}, year = {2021},
  url = {https://jmlr.org/papers/v22/20-084.html}
}

@article{amendola2021,
  author = {Am{\'e}ndola, Carlos and Kohn, Kathl{\'e}n and Reichenbach, Philipp and Seigal, Anna},
  title = {Invariant theory and scaling algorithms for maximum likelihood estimation},
  journal = {SIAM Journal on Applied Algebra and Geometry},
  volume = {5}, number = {2}, pages = {304--337}, year = {2021},
  doi = {10.1137/20M1328932}
}

@article{franks2021neartyler,
  author = {Franks, Cole and Oliveira, Rafael and Ramachandran, Akshay and Walter, Michael},
  title = {Near optimal sample complexity for matrix and tensor normal models via geodesic convexity},
  journal = {The Annals of Statistics},
  volume = {54}, number = {1}, pages = {93--119}, year = {2026},
  doi = {10.1214/25-AOS2539}
}

@inproceedings{tensorscaling2018,
  author = {B{\"u}rgisser, Peter and Franks, Cole and Garg, Ankit and Oliveira, Rafael and Walter, Michael and Wigderson, Avi},
  title = {Efficient algorithms for tensor scaling, quantum marginals, and moment polytopes},
  booktitle = {2018 IEEE 59th Annual Symposium on Foundations of Computer Science (FOCS)},
  pages = {883--897}, year = {2018}, publisher = {IEEE},
  doi = {10.1109/FOCS.2018.00088}
}

@inproceedings{sravishnoibracsampliebconstant,
  author = {Sra, Suvrit and Vishnoi, Nisheeth K. and Yildiz, Ozan},
  title = {On geodesically convex formulations for the {Brascamp--Lieb} constant},
  booktitle = {Approximation, Randomization, and Combinatorial Optimization. Algorithms and Techniques (APPROX/RANDOM 2018)},
  series = {Leibniz International Proceedings in Informatics (LIPIcs)},
  volume = {116}, pages = {25:1--25:15}, year = {2018},
  publisher = {Schloss Dagstuhl--Leibniz-Zentrum f{\"u}r Informatik},
  doi = {10.4230/LIPIcs.APPROX-RANDOM.2018.25}
}

@book{Bhatia,
  author = {Bhatia, Rajendra},
  title = {Positive Definite Matrices},
  publisher = {Princeton University Press}, year = {2007}
}

@misc{CK25,
  author = {Christopher Criscitiello and Jungbin Kim},
  title = {Horospherically Convex Optimization on {Hadamard} Manifolds {Part I}: Analysis and Algorithms},
  year = {2025},
  note = {arXiv:2505.16970},
  url = {https://arxiv.org/abs/2505.16970}
}

@misc{LLN24,
  author = {Adrian S. Lewis and Genaro L{\'o}pez-Acedo and Adriana Nicolae},
  title = {Horoballs and the subgradient method},
  year = {2024},
  note = {arXiv:2403.15749},
  url = {https://arxiv.org/abs/2403.15749}
}

@article{GLLN26,
  author = {Ariel Goodwin and Adrian S. Lewis and Genaro L{\'o}pez-Acedo and Adriana Nicolae},
  title = {Stochastic and incremental subgradient methods for convex optimization on {Hadamard} spaces},
  journal = {Mathematical Programming},
  year = {2026},
  doi = {10.1007/s10107-026-02334-4},
  note = {arXiv:2412.06730},
  url = {https://arxiv.org/abs/2412.06730}
}

@article{Hirai24,
  author = {Hiroshi Hirai},
  title = {Convex analysis on {Hadamard} spaces and scaling problems},
  journal = {Foundations of Computational Mathematics},
  volume = {24},
  pages = {1979--2016},
  year = {2024},
  doi = {10.1007/s10208-023-09628-5}
}

@article{KLM09,
  author = {Michael Kapovich and Bernhard Leeb and John J. Millson},
  title = {Convex functions on symmetric spaces, side lengths of polygons and the stability inequalities for weighted configurations at infinity},
  journal = {Journal of Differential Geometry},
  volume = {81},
  number = {2},
  pages = {297--354},
  year = {2009},
  url = {https://arxiv.org/abs/math/0311486}
}

@article{CDHS20,
  author = {Yair Carmon and John C. Duchi and Oliver Hinder and Aaron Sidford},
  title = {Lower bounds for finding stationary points {I}},
  journal = {Mathematical Programming},
  volume = {184},
  pages = {71--120},
  year = {2020},
  doi = {10.1007/s10107-019-01406-y}
}

@misc{Li26,
  author = {Zhiyuan Li},
  title = {Lecture 12: Oracle Complexity Lower Bounds},
  year = {2026},
  howpublished = {TTIC 31070: Convex Optimization, Spring 2026. Course notes},
  url = {https://zhiyuanli.ttic.edu/convex-optimization-site/notes/lecture12.pdf}
}

@misc{JK24,
  author = {Vladimir Ja{\'c}imovi{\'c} and David Kalaj},
  title = {Conformal and holomorphic barycenters in hyperbolic balls},
  year = {2024},
  note = {arXiv:2410.02257},
  url = {https://arxiv.org/abs/2410.02257}
}

@inproceedings{BCCD23,
  author = {Bonet, Cl{\'e}ment and Chapel, Laetitia and Drumetz, Lucas and Courty, Nicolas},
  title = {Hyperbolic Sliced-{Wasserstein} via Geodesic and Horospherical Projections},
  booktitle = {Proceedings of the 2nd Annual Workshop on Topology, Algebra, and Geometry in Machine Learning},
  series = {Proceedings of Machine Learning Research},
  volume = {221},
  pages = {334--370},
  year = {2023},
  publisher = {PMLR},
  url = {https://proceedings.mlr.press/v221/bonet23a.html}
}

@phdthesis{CriscitielloThesis25,
  author = {Criscitiello, Christopher Arnold},
  title = {{On the Complexity of Optimization: Curved Spaces and Benign Landscapes}},
  school = {{\'E}cole polytechnique f{\'e}d{\'e}rale de Lausanne},
  year = {2025},
  type = {PhD thesis},
  number = {11173},
  note = {Thesis no.\ 11173},
  doi = {10.5075/epfl-thesis-11173}
}

@misc{WuEtAl26,
  author = {Wendao Wu and Haihan Zhang and Chenheng Zhang and Yanyi Li and Chunyuan Zheng and Cong Fang and Haoxuan Li and Zhouchen Lin},
  title = {Geodesically Convex Optimization in Hyperbolic Space: Matching Lower Bounds},
  year = {2026},
  note = {arXiv:2610.12184},
  url = {https://arxiv.org/abs/2610.12184}
}
\end{document}